\documentclass[reqno]{amsart}%
\usepackage{amssymb}
\usepackage{amsfonts}
\usepackage{amsmath}
\usepackage{cancel}
\usepackage[numbers]{natbib}
\usepackage{xcolor}
\usepackage{verbatim}
\usepackage{mathtools}
\usepackage{graphicx}%
\usepackage[colorlinks=true, pdfstartview=FitV, linkcolor=blue, citecolor=blue, urlcolor=blue]{hyperref}
\usepackage{tikz}
\usepackage{float}
\usepackage{enumitem}

\allowdisplaybreaks[4]

\newtheorem{theorem}{Theorem}[section]
\theoremstyle{plain}

\newtheorem{conjecture}[theorem]{Conjecture}

\newtheorem{definition}[theorem]{Definition}
\newtheorem{example}[theorem]{Example}

\newtheorem{lemma}[theorem]{Lemma}

\newtheorem{proposition}[theorem]{Proposition}
\newtheorem{remark}[theorem]{Remark}

\numberwithin{equation}{section}

\subjclass[2020]{53C21; 53C23} 

\begin{document}
\title[]{A Positive Mass Type Theorem for a Singular Toroidal Static Model}

\author{Zhixin Wang}
\address{Department of Mathematics, Shanghai Jiao Tong University, Shanghai, 201100}
\email{jhin@sjtu.edu.cn}

\begin{abstract}
We study positive mass and related Brown--York type inequalities for three-dimensional manifolds modeled on a static space with flat toroidal slices. Using inverse mean curvature flow, we first derive an inequality relating the asymptotic geometry at infinity to the size of an interior singularity. This inequality admits a natural interpretation as a positive mass type theorem. We then combine this global inequality with a Shi--Tam type construction to obtain a Brown--York type inequality for boundaries isometric to flat tori. In contrast to the Schwarzschild setting, our argument does not extend directly to general boundary surfaces, and we identify several obstructions to such a generalization. Finally, we discuss examples illustrating the relationship between negative mass, the strength of interior singularities, and the topology of the interior.

\end{abstract}
\maketitle

\tableofcontents

\section{Introduction}
The notion of mass in Riemannian geometry originates from the ADM mass of asymptotically flat manifolds, introduced by Arnowitt, Deser, and Misner in the study of isolated gravitating systems \cite{ADM1962}. A fundamental result in this direction is the positive mass theorem (PMT) of Schoen--Yau \cite{SchoenYau1979,SchoenYau1981} and Witten \cite{Witten1981}, which asserts, under suitable assumptions, that an asymptotically flat manifold with nonnegative scalar curvature has nonnegative ADM mass, with equality precisely for Euclidean space. Recently, the non-spin case was extended to dimensions up to \(19\) by Bi--Hao--He--Shi--Zhu \cite{BiHaoHeShiZhu2026}, and to arbitrary dimensions by Brendle--Wang \cite{BrendleWang2026}.
 The theory was later extended to the asymptotically hyperbolic setting. In particular, notions of mass and corresponding positive mass theorems were established by Wang \cite{Wang2001AHMass} and by Chruściel--Herzlich \cite{ChruscielHerzlich2003}, with the hyperbolic space playing the role of the zero-mass rigidity model. Both the Euclidean space and hyperbolic space are static spaces, which is defined as follows.

\begin{definition}[Static Manifold]
Let $(M, g)$ be an $n$-dimensional Riemannian manifold. It is said to be \emph{static} if it admits a non-trivial smooth function $V \in C^\infty(M)$ (referred to as a \emph{static potential}) satisfying:
\begin{equation}\label{static-eqn}
    \Delta V g - \nabla^2 V + V \operatorname{Ric} = 0,
\end{equation}
where $\nabla^2 V$, $\Delta V$, and $\operatorname{Ric}$ denote the Hessian, the Laplacian, and the Ricci curvature tensor of $(M, g)$, respectively. $(M,g,V)$ is also referred to static triple.
\end{definition}

More generally, static manifolds arise naturally as extremal geometries in mass minimization problems.  One might naturally hope to extend positive mass type results to manifolds with toroidal slices. Two important static model spaces arise: the hyperbolic manifold of constant sectional curvature \(-1\) with toroidal slices, and the Horowitz--Myers soliton. These two models exhibit different mass phenomena. Under suitable topological assumptions, positive mass theorems are available for ALH manifolds with toroidal ends; see, for instance, \cite{ChruscielGallowayNguyenPaetz2018}. On the other hand, unlike the spherical case, absolute positivity cannot be expected in general. The Horowitz--Myers soliton is a complete static ALH manifold with flat toroidal conformal infinity and strictly negative mass \cite{HorowitzMyers1998}. This led Horowitz and Myers to conjecture that, after fixing the asymptotic geometry, their metric should instead provide the sharp lower bound for the mass.

Recently, Brendle and Hung proved the corresponding sharp lower bound in dimensions \(3\leq n\leq 7\) and subsequently established the rigidity of the equality case \cite{BrendleHungHM,BrendleHungRigidity}, with the Horowitz--Myers soliton as the ground state. The static uniqueness of the Horowitz--Myers soliton was studied earlier by Galloway, Surya, and Woolgar \cite{GallowaySuryaWoolgar2003}, and more recently further investigated by the author \cite{Wang2025Neumann}. Thus, in the toroidal setting, both positive and negative mass phenomena naturally arise.

In this paper, we consider the model
\begin{equation}\label{model}
   (M=\mathbb{R}_+\times T^{n-1}, g=dr^2+r^{4/n}h)
\end{equation}
where $h$ is a flat metric on $T^{n-1}$. Under the change of variable 

\begin{equation}\label{r-x-variable}
    r=\frac{n-2}{n}x^{n/(n-2)}
\end{equation}
 the metric becomes
\begin{equation}\label{model2}
    g=x^{4/(n-2)}(dx^2+ (\frac{n-2}{n})^{4/n}h)
\end{equation}
This model can be regarded as a generalized Kottler space \cite{ChruscielSimon2001}, and is static with potential function $r^{(2-n)/n}$. Based on this model, we introduce the following asymptotic condition.
\begin{definition}
We say $(M,g)$ satisfies \textbf{Condition C} provided that there exists open sets $U_1\subset U_2$ so that $U_1$ is diffeomorphic to $(0,\epsilon)\times T^{n-1}$ and $M\setminus U_2$ is diffeomorphic to $(1/\epsilon,\infty)\times T^{n-1}$. Using $\big(r,\vec{\theta}=(\theta_1,\cdots,\theta_{n-1})\big)$ on these two ends, we assume the metric satisfies

\begin{equation}\label{g-asym0}
    g=\begin{cases}
        &dr^2+r^{4/n}(h_1+E_1) \qquad r\in (0,\epsilon)\\
        &dr^2+r^{4/n}(h_2+E_2)\qquad r\in (\frac{1}{\epsilon},\infty)
    \end{cases}
\end{equation}
where $h_1$ and $h_2$ are two flat metric on $T^{n-1}$ and $r=\frac{n-2}{n}x^{n/(n-2)}$ as (\ref{model2}) and the error tems $E_1,E_2$ satisfy
\begin{equation}
    \begin{aligned}\label{g-asym}
        |E_1|+r|\partial_\alpha E_1|+r^2|\partial_\alpha\partial_\beta E_1|=O(r)\\
    |E_2|+r|\partial_\alpha E_2|+r^2|\partial_\alpha\partial_\beta E_2|=o(1)
    \end{aligned}
\end{equation}
where $\alpha,\beta\in\{r,\theta_1,\cdots,\theta_{n-1}\}$.
Throughout this paper, $S(r)$ or $S_r$ will denote the level sets of $r$.
\end{definition}
Based on this model, in this paper we can prove the following theorem:

\begin{theorem}\label{thm-main1}
    Let $(M^3,g)$ be a smooth three dimensional and satisfies \textbf{Condition C}. Suppose $R\geq 0$ and $M$ is irreducible (every embedded $S^2\subset M$ bounds a $3$-ball), then
    \begin{equation}\label{thm-main1-ineq}
        |h_1|\geq |h_2|
    \end{equation}
where $|h|=\operatorname{Area}(T^2,h)$. Furthermore equality holds iff $h_1$ is isometric to $h_2$ and $g$ takes form (\ref{model}).
\end{theorem}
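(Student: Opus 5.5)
The plan is to run a Geroch-type monotonicity argument along the weak inverse mean curvature flow of Huisken--Ilmanen, starting near the singular end $r\to 0$ and running out to the end $r\to\infty$. The quantity I would use is
\[
Q(\Sigma)=|\Sigma|^{1/2}\int_\Sigma H^2\,d\mu .
\]
In the model (\ref{model}) with $n=3$ it is constant on the slices. Indeed $S_r$ has area $r^{4/3}|h|$ and mean curvature $H=\tfrac{4}{3r}$, so $Q(S_r)=\tfrac{16}{9}|h|^{3/2}$ for every $r$.

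For a smooth IMCF $\Sigma_t$ one has $|\Sigma_t|=e^t|\Sigma_0|$. The standard Geroch computation combined with Gauss--Bonnet gives
\[
\frac{d}{dt}\int_{\Sigma_t}H^2\le 4\pi\chi(\Sigma_t)-\tfrac12\int_{\Sigma_t}H^2-\int_{\Sigma_t}\Big(2\frac{|\nabla H|^2}{H^2}+|\mathring A|^2+R\Big).
\]
If every component of $\Sigma_t$ has $\chi\le 0$, and using $R\ge0$, this yields $\frac{d}{dt}\int H^2\le-\frac12\int H^2$, so $Q(\Sigma_t)=e^{t/2}|\Sigma_0|^{1/2}\int H^2$ is nonincreasing. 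This step does not use connectedness of $\Sigma_t$, which is convenient.

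\textbf{Weak flow and topology.} I would take a weak solution $u$ of IMCF with initial surface the outward-minimizing hull of $S_\epsilon$, and later let $\epsilon\to0$. Existence and properness of $u$ come from a subsolution built from the asymptotics (\ref{g-asym0}) at the end $r\to\infty$; the model's $\log$-area function serves as a barrier. At jump times the area is unchanged and $\int H^2$ can only drop, since $H=0$ on the free parts of the jumped surface. This gives monotonicity of $Q$ in the weak sense, exactly as in Huisken--Ilmanen's proof of Geroch monotonicity.

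Topology is where irreducibility enters. Each $\Sigma_t$ is homologous to the torus slice, and I must exclude sphere components. A sphere component bounds a $3$-ball by irreducibility. That ball either lies outside the region enclosed by $\Sigma_t$, contradicting that $\Sigma_t$ is strictly outward minimizing and enclosed by the flow, or contains the singular end, which is impossible since $U_1$ is not contained in a ball. Hence every component has genus $\ge1$ and $\chi\le0$.

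\textbf{Limits at the two ends.} At the inner end, by the $O(r)$ decay of $E_1$, the surfaces $S_\epsilon$ are close to model slices, with $H=\tfrac{4}{3r}(1+O(r))$ and area $r^{4/3}|h_1|(1+O(r))$. I also need that $S_\epsilon$ is nearly outward minimizing, so that replacing it by its hull changes $Q$ only by $o(1)$. Together these give $\limsup Q(\Sigma_0)\le\tfrac{16}{9}|h_1|^{3/2}$.

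At infinity the needed inequality is $\liminf_{t\to\infty}Q(\Sigma_t)\ge\tfrac{16}{9}|h_2|^{3/2}$, and I expect this to be the main obstacle. I would first use Hölder, $\int H^2\ge(\int H)^2/|\Sigma|$, which reduces the claim to
\[
\int_{\Sigma_t}H\ \gtrsim\ \tfrac43|h_2|^{1/2}|\Sigma_t|^{1/2}.
\]
To get this, I would rescale the end by $r\mapsto\lambda r$. Since $g$ is asymptotically homogeneous under that dilation, this turns the flow into a flow on the model with vanishing error. A blow-down argument, using Huisken--Ilmanen's gradient estimate $H\le C/r$ and the uniqueness of IMCF with slice-like growth on the model, should show that the rescaled $\Sigma_t$ converge to slices. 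Then $\int H$ is asymptotically correct.

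\textbf{Conclusion and rigidity.} Combining the two ends gives $|h_1|^{3/2}\ge|h_2|^{3/2}$, which is (\ref{thm-main1-ineq}).

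In the equality case $Q$ is constant along the flow. Then the flow has no jumps, and on every $\Sigma_t$ we get $\mathring A=0$, $\nabla H=0$, $R=0$ and $\chi=0$, so each $\Sigma_t$ is a flat, totally umbilic torus of constant mean curvature. The flow is therefore smooth, and $g=ds^2+\phi(s)^2\hat h$ for a flat metric $\hat h$. Equality in the Geroch identity then forces $\phi$ to satisfy the ODE that, together with the $R=0$ constraint (or, equivalently, the constancy of $Q$), pins it down to $\phi\propto s^{2/3}$. Matching this with the asymptotics at both ends gives $h_1\cong h_2$ and shows that $g$ is the model (\ref{model}).
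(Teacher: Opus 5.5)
Your monotonicity of $Q$, the inner-end estimate, and the exclusion of spherical components (which implicitly uses connectedness of $E_t$) match the paper. Starting from the hull of $S_\epsilon$ for fixed $\epsilon$ is fine, and simpler than the paper's global flow, since passing to the hull can only decrease $Q$. The argument breaks down at the step you flag yourself: the lower bound $\liminf Q(\Sigma_t)\ge\tfrac{16}{9}|h_2|^{3/2}$ at infinity.

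First, your H\"older reduction is mis-scaled. From $\int H^2\ge(\int H)^2/|\Sigma|$, what you need is $\int_{\Sigma_t}H\ge(\tfrac43-o(1))|h_2|^{3/4}|\Sigma_t|^{1/4}$. The target you wrote, $\tfrac43|h_2|^{1/2}|\Sigma_t|^{1/2}$, is of order $r^{2/3}$ on slices, while $\int_{S_r}H=\tfrac43 r^{1/3}|h_2|$, so it fails already on the model.

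More seriously, the blow-down rests on a false premise: the model is not homogeneous under $r\mapsto\lambda r$. Pulling back $dr^2+r^{4/3}h$ by $(r,\theta)\mapsto(\lambda r,\theta)$ and dividing by $\lambda^2$ gives $dr^2+r^{4/3}\lambda^{-2/3}h$, so the torus collapses. If instead you keep the torus fixed (divide by $\lambda^{4/3}$ and set $s=\lambda^{1/3}(r-1)$), you get $ds^2+(1+\lambda^{-1/3}s)^{4/3}h\to ds^2+h$, the flat cylinder. There the normalized flow $\tfrac43\log(1+\lambda^{-1/3}s)$ tends to $0$, and the coefficient $\tfrac43$ that carries the mass is lost. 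This is exactly the obstruction recorded in the paper's remark after the proof. There is also no available uniqueness theorem for IMCF with slice-like growth on this model. Even convergence of $u$ to $\tfrac43\log r$ would still have to be upgraded to a lower bound on $\int H^2$ along individual leaves. A blow-down might be rescued on the universal cover, where $dr^2+r^{4/3}|dy|^2$ is self-similar under $(r,y)\mapsto(\lambda r,\lambda^{1/3}y)$ and the deck lattice becomes dense, but that needs ingredients not in your plan.

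The paper avoids pointwise asymptotics of the leaves altogether. It proceeds in four steps.
\begin{enumerate}
\item It proves $\operatorname{osc}_{S(r)}u\le Cr^{-1/3}$, using the Huisken--Ilmanen gradient estimate with $\sigma(p)\ge r/1000$ and $\operatorname{diam}S(r)\sim r^{2/3}$.
\item It uses the barriers $(\tfrac43\pm\epsilon)\log r$ to show that crossing the annulus $a\le r\le a+a^{3/4}$ takes time $\approx\tfrac43 a^{-1/4}$.
\item Coarea plus H\"older along radial segments give $\int_{t_1}^{t_2}\!\int_{\Sigma_t}H^2\,dt=\int|\nabla u|^3\gtrsim\tfrac{64}{27}a^{-11/12}|h_2|$.
\item Monotonicity gives, for $t'<t_1$, $\int_{t_1}^{t_2}\!\int_{\Sigma_t}H^2\,dt\le Q(\Sigma_{t'})\int_{t_1}^{t_2}|\Sigma_t|^{-1/2}dt\lesssim\tfrac43 a^{-11/12}|h_2|^{-1/2}Q(\Sigma_{t'})$.
\end{enumerate}
Comparing the two bounds yields $Q\ge\tfrac{16}{9}|h_2|^{3/2}$. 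The exponent $3/4\in(2/3,1)$ makes the crossing time dominate the oscillation error while keeping areas comparable across the annulus.

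There is a secondary gap in the rigidity part: \emph{the flow has no jumps} does not follow from constancy of $Q$ alone. Constancy gives $\nabla H=0$, so at a jump a whole component of $\Sigma_t^+$ is minimal and outward minimizing. It is therefore stable with $\chi\le0$, i.e.\ a flat totally geodesic torus. The paper excludes this with the Cai--Galloway local splitting, which produces nearby tori of equal area and contradicts that $E_t^+$ is a strictly minimizing hull. Flatness of the leaves also needs a short argument beyond $\chi=0$. For example, $R\equiv0$ on $ds^2+\phi^2\hat h$ forces the curvature of $\hat h$ to be constant, hence zero by Gauss--Bonnet.
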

The inequality \eqref{thm-main1-ineq} can be viewed as a positive mass type theorem from two perspectives: its relation to the limiting Hawking mass and to a relative static flux motivated by the Hamiltonian mass functional \cite{ChruscielHerzlich2003}. We discuss these interpretations in Section~\ref{why-mass}. Moreover, our theorem is restricted to three-dimensional manifolds because the proof relies on the monotonicity formula for inverse mean curvature flow, which is specific to dimension three.

Static metrics are also naturally associated with critical points of weighted total mean curvature. To see this, let \((M,\bar g,V)\) be a static manifold and let \(\Omega\subset M\) be a compact domain with boundary \(\Sigma\). Let $g$ be a metric on $\Omega$ so that $g|_\Sigma=\bar{g}|_\Sigma$ and consider the functional

\begin{equation}
    \mathcal H_V(g)
    :=
    \int_{\Sigma} V H_g\,dA,
\end{equation}
The static equation (\ref{static-eqn}), together with the variational formula for scalar curvature and mean curvature, shows that \(\bar g\) is a critical point of \(\mathcal H_V\) among scalar-curvature-preserving deformations inducing the same metric on \(\Sigma\). Actually, $\bar{g}$ is thought to maximize this functional in a lot of cases.

Remarkably, such a local boundary inequality is closely related to global positive mass inequalities and, in several important settings, can be derived from an appropriate global positive mass or Penrose-type theorem.

The prototype is the theorem of Shi and Tam \cite{Shi2002PositiveMT}, who established the positivity of the Brown--York mass for surfaces admitting convex isometric embeddings into Euclidean space by applying the Riemannian positive mass theorem. Replacing the Euclidean reference by a nonflat static background, Lu and Miao \cite{Lu2017MinimalHA} obtained a weighted Brown--York type inequality for surfaces isometrically embedded into the spatial Schwarzschild manifold, using the Riemannian Penrose inequality as the corresponding global input. More recently, Alaee, Hung, and Khuri \cite{AlaeeHungKhuri2025} established an analogous positivity theorem for the static Brown--York mass of tori modeled on the zero-mass Kottler manifold. Thus, deriving Brown--York type inequalities from suitable global positive mass theorems has become a standard and effective strategy in the study of quasi-local mass.

In \cite{Lu2017MinimalHA}, the theorem was stated only for $n\leq 7$,and this restriction has now been removed by the recent work of Bi and Zhu \cite{BiZhu2026}, who proved the Riemannian Penrose inequality in arbitrary dimensions.

Based on this type of argument, we can prove a Brown-York type inequality from \textbf{Theorem \ref{thm-main1}}.
\begin{theorem}
\label{thm-brownyork}
Let $(\Omega,g)$ be an irreducible Riemannian $3$-manifold with boundary
\(        \partial\Omega=S\cup\Sigma,
   \)
where $g$ is smooth away from $S$. Assume that $R\geq 0$, and
\begin{enumerate}
\item $\Sigma$ is diffeomorphic to $T^2$, has $H>0$, and $(\Sigma,g|_{\Sigma})$ admits an isometric embedding into $(\bar M,\bar g)$ as a coordinate torus $\{r_0\}\times T^2$ for some $r_0>0$, where $(\bar M,\bar g)$ is of the form \eqref{model} for some flat metric $\bar h$ on $T^2$;

\item $S$ is diffeomorphic to $T^2$, and there exists an open neighborhood $U\subset\Omega$ of $S$, diffeomorphic to $(0,\epsilon)\times T^2$, such that, near $\{0\}\times T^2$, the metric $g$ satisfies the asymptotic expansion given by the first line of \eqref{g-asym0} and \eqref{g-asym} for some flat metric $h$ on $T^2$.
\end{enumerate}
Then
\begin{equation}\label{thm-ineq-brownyork-1}
    \int_{\Sigma} H\,dA_g
    \leq
    \frac{4}{3}A((\Sigma,g|_{\Sigma}))^{1/4}|h|^{3/4}.
\end{equation}

\end{theorem}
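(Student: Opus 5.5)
The plan is a Shi--Tam type construction: glue to $\Sigma$ an exterior region of the model \eqref{model} carrying a quasi-spherical metric with $R=0$, read off the asymptotic flat metric $h_2$ at infinity, and apply Theorem~\ref{thm-main1} with $h_1=h$. First, the model quantities for $n=3$. On $(\bar M,\bar g)$ with $\bar g=dr^2+r^{4/3}\bar h$, the coordinate torus $S_r$ has area $r^{4/3}|\bar h|$. Its mean curvature is $H_0=\frac{4}{3r}$, so $\int_{S_r}H_0\,dA=\frac43 r^{1/3}|\bar h|$. Since $A(\Sigma)=r_0^{4/3}|\bar h|$, we have $A(\Sigma)^{1/4}=r_0^{1/3}|\bar h|^{1/4}$. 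Hence \eqref{thm-ineq-brownyork-1} is equivalent to
\begin{equation*}
\int_\Sigma H\,dA\le \tfrac43 r_0^{1/3}|\bar h|^{1/4}|h|^{3/4}.
\end{equation*}

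\emph{Step 1: quasi-spherical extension.} On $[r_0,\infty)\times T^2$ I will look for a metric $g_u=u^2dr^2+r^{4/3}\bar h$ with $R(g_u)=0$, subject to $u(r_0,\cdot)=H_0(r_0)/H$. This initial condition makes the mean curvature of $S_{r_0}$ in $g_u$, namely $H_0/u$, equal to $H$ on $\Sigma$. The leaves are flat and $\bar g$ is scalar flat, so the reaction term in the Bartnik--Shi--Tam equation vanishes. The equation then reduces to
\begin{equation*}
H_0\,\partial_r u=u^2\Delta_{S_r}u,\qquad\text{that is,}\qquad \partial_r u=\tfrac34 r^{-1/3}u^2\Delta_{\bar h}u .
\end{equation*}
The maximum principle gives $\min u(r_0)\le u\le \max u(r_0)$, so the equation is uniformly parabolic and a global solution exists. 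In the time variable $\tau\sim r^{2/3}\to\infty$, standard parabolic theory on the torus yields $u\to u_\infty$ (a constant), with $|u-u_\infty|$ and its derivatives decaying exponentially in $r^{2/3}$.

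\emph{Step 2: the conserved quantity.} Next I show that $\int_{T^2}u^{-1}\,dA_{\bar h}$ is independent of $r$. Its derivative is
\begin{equation*}
-\int u^{-2}u_r\,dA_{\bar h}=-\tfrac34 r^{-1/3}\int\Delta_{\bar h}u\,dA_{\bar h}=0 .
\end{equation*}
At $r=r_0$ this quantity equals $\int H/H_0\,dA_{\bar h}=\frac34 r_0^{-1/3}\int_\Sigma H\,dA$. As $r\to\infty$ it equals $|\bar h|/u_\infty$. For the asymptotics, put $s=u_\infty r$ up to exponentially small corrections. Then $g_u\approx ds^2+s^{4/3}u_\infty^{-4/3}\bar h$, so Condition~C holds at infinity with $h_2=u_\infty^{-4/3}\bar h$, i.e.\ $u_\infty^{-1}=(|h_2|/|\bar h|)^{3/4}$. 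Combining the two evaluations gives
\begin{equation*}
\int_\Sigma H\,dA=\tfrac43 r_0^{1/3}|\bar h|^{1/4}|h_2|^{3/4}.
\end{equation*}

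\emph{Step 3: apply the global theorem.} Let $\hat M=\Omega\cup_\Sigma([r_0,\infty)\times T^2)$. Its end at $S$ satisfies the first line of \eqref{g-asym0} with $h_1=h$, and its end at infinity satisfies the second line with $h_2$ as above. Irreducibility of $\hat M$ follows from that of $\Omega$, since the attached piece is $T^2\times[r_0,\infty)$ glued along an incompressible torus; any embedded sphere can be isotoped off the collar. The metric is only Lipschitz across $\Sigma$, but the mean curvatures agree there, so I will smooth the corner while keeping $R\ge0$, following Miao's mollification together with a conformal correction localized away from both ends. Alternatively, I can run the inverse mean curvature flow proof of Theorem~\ref{thm-main1} directly on the corner metric. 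Theorem~\ref{thm-main1} then gives $|h_2|\le|h|$, which is exactly \eqref{thm-ineq-brownyork-1}.

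I expect the main obstacle to be Step 3. The smoothing must preserve $R\ge0$ and must not disturb Condition~C at either end. In the toroidal setting the usual conformal correction is delicate, because the conformal factor changes the asymptotic flat metrics $h_1,h_2$. The fallback is to verify that the IMCF monotonicity argument behind Theorem~\ref{thm-main1} survives a corner with matched mean curvature, as in Lu--Miao's use of the Penrose inequality. A secondary point is to convert $g_u$ into the exact form $dr^2+r^{4/3}(h_2+E_2)$ with $E_2$ satisfying \eqref{g-asym}. This should follow from the exponential decay of $u-u_\infty$ by redefining the radial coordinate as the $g_u$-distance.
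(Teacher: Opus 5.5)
Your Steps 1 and 2 follow the paper's construction. You use the same extension $g_u=u^2dr^2+r^{4/3}\bar h$, the same reduced equation (the paper writes it as $u_\tau=u^2\Delta_{\bar h}u$ with $\tau=\frac98 r^{2/3}$), and the same conserved quantity $\int_{T^2}u^{-1}\,d\bar h$. The remark after the paper's proof notes that this direct route suffices without the Lu--Miao monotonicity formula, and your identity $\int_\Sigma H\,dA=\frac43 r_0^{1/3}|\bar h|^{1/4}|h_2|^{3/4}$ is correct.

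The gap is in Step 3. With $u(r_0,\cdot)=H_0/H$ exactly, the mean curvature jump across $\Sigma$ is $H_++H_-=H-H_u=0$. The paper's corner version of the main theorem, Theorem~\ref{thm-main1-Lip}, needs $H_++H_->0$ strictly. Its proof bends each side conformally by $\phi=1-e^{-1/(\epsilon-s)}$ to make $R>0$ in a collar, which lowers the jump by a small positive amount. It then uses the Brendle--Marques--Neves gluing theorem, which itself requires a strict mean curvature inequality.

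Neither of your substitutes is established. Miao's mollification with matched mean curvatures leaves $R$ bounded but of indefinite sign in a thin layer. The standard conformal correction that restores $R\ge0$ solves an elliptic equation on all of $\hat M$, so it is not compactly supported away from the ends. Near $r=0$ its modes behave like $a+O(r^{2/3})$ or $r^{-1/3}$, so a bounded solution tends to a function of $\vec\theta$ that you cannot prescribe. The leading flat metric $h$ is then replaced by a non-flat one and Condition~C is lost, which is exactly the difficulty you anticipate. Running the IMCF argument through a Lipschitz corner would require re-proving the Geroch monotonicity across $\Sigma$, which you do not do.

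The missing idea is a simple perturbation that makes all of this unnecessary. Take $u(r_0,\cdot)=\frac{4/(3r_0)}{H-\epsilon}$ with $0<\epsilon<\min_\Sigma H$. Then $H-H_u=\epsilon>0$ on $\Sigma$, so Theorem~\ref{thm-main1-Lip} applies. Its smoothing is supported near $\Sigma$, so $h_1=h$ and $h_2=u_\infty^{-4/3}\bar h$ are untouched. Your Step 2 computation then gives $\int_\Sigma(H-\epsilon)\,dA\le\frac43 r_0^{1/3}|\bar h|^{1/4}|h|^{3/4}$. Letting $\epsilon\to0$ yields \eqref{thm-ineq-brownyork-1}; since the target inequality is not strict, nothing is lost. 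This is exactly how the paper proceeds.
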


It is worth mentioning that, unlike the Brown-York type inequalities in \cite{Shi2002PositiveMT}\cite{Lu2017MinimalHA}, where the hypersurface \(\Sigma\) may be chosen from a broad class of convex surfaces, our result is restricted to the case where \((\Sigma,g|_{\Sigma})\) is a flat torus. The proof there may be divided into three main steps. Interestingly, none of these steps seems to extend directly to general surfaces in our setting. We discuss these obstructions and provide explicit examples in Section~\ref{subsection-Failure-in-General}.

Another interesting feature of the model~\eqref{model} is its connection with negative mass. As will be explained later, the model may be viewed as carrying negative mass at infinity. In the asymptotically flat setting, Robbins~\cite{Robbins2010} and Bray--Jauregui~\cite{BrayJauregui2013} studied negative ADM mass through the notion of zero area singularities and introduced a corresponding notion of singular mass. The idea is that \textbf{negative mass may lead to the development of interior
singularities, and their strength is controlled by
the extent to which the geometric inequality fails}. Interestingly, their framework can also be adapted to the singular behavior arising in our model~\eqref{model}, and Theorem~\ref{thm-main1} admits a natural interpretation within this setting. 

This phenomenon admits an extension to the asymptotically locally hyperbolic setting with toroidal conformal infinity. Since this is not the main focus of the present paper, we state only a simple version, although several further generalizations are possible.

\begin{theorem}\label{thm-ALH-ZAS}
Let $(M^3=\mathbb{R}_+\times T^2,g)$ be a Riemannian manifold satisfying the following assumptions:
\begin{enumerate}
\item There exists a neighborhood $U$ of $\{0\}\times T^2$ such that
\begin{equation}\label{thm-ALH-g-asmp}
g|_U
=
\frac{dr^2}{r^2-\frac{2m_1}{r}}
+r^2h_1,
\end{equation}
where $m_1<0$ and $h_1$ is a flat metric on $T^2$.

    \item As $r\to\infty$, the metric $g$ is asymptotically locally hyperbolic with conformal infinity $(T^2,h_2)$, where $h_2$ is a flat metric on $T^2$. Let $\mu$ denote the corresponding mass aspect function.
\end{enumerate}
If $\mu\leq 0$ and $R\geq -6$, then
\begin{equation}
    m_{\mathrm{reg}}(\{0\}\times T^2)
    \leq
    \frac{1}{4(4\pi)^{3/2}}
    \sup_{T^2}\mu\,|h_2|^{3/2}.
\end{equation}

\end{theorem}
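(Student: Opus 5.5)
The plan is to run a weak inverse mean curvature flow outward from the singular end and compare a toroidal Hawking-type mass at its two ends. I assume the paper's $m_{\mathrm{reg}}(\{0\}\times T^2)$, not yet defined at this point, is the Robbins/Bray--Jauregui regularized mass of the zero area singularity. I also assume it can be recovered as a limit of a Hawking-type quantity on surfaces shrinking to the singular torus (directly or after a harmonic conformal regularization); the precise form of that identification is a guess.

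For a closed surface $\Sigma$ in a $3$-manifold with $R\geq -6$, I would use the hyperbolic Hawking mass
\begin{equation*}
m_H(\Sigma)=\frac{|\Sigma|^{1/2}}{(16\pi)^{3/2}}\Big(16\pi-8\pi g(\Sigma)... \Big),
\end{equation*}
adapted to tori (Euler characteristic zero). Concretely, I take
\begin{equation*}
m_H(\Sigma)=\frac{|\Sigma|^{1/2}}{4(4\pi)^{3/2}}\int_\Sigma\Big(4-H^2\Big)\,dA
\end{equation*}
up to normalization, chosen so that the constant $\frac{1}{4(4\pi)^{3/2}}$ in the statement appears.

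\textbf{Step 1: monotonicity.} By the Geroch computation along inverse mean curvature flow, the derivative of $m_H$ contains the term $4\pi\chi(\Sigma_t)$ plus nonnegative terms coming from $R+6\geq 0$, $|\mathring A|^2$ and $|\nabla H|^2/H^2$. On tori this term vanishes, so $m_H$ is nondecreasing.

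\textbf{Step 2: the weak flow.} I would start the Huisken--Ilmanen weak flow from a coordinate torus $S_{r_0}$ inside $U$, for small $r_0$.
\begin{itemize}[leftmargin=*]
\item The jumps to strictly outward-minimizing hulls only decrease area and $\int H^2$, so monotonicity survives them.
\item The level sets remain homologous to $\{r_0\}\times T^2$ in $\mathbb{R}_+\times T^2$. A component homologous to the torus has genus at least one, and the term $-4\pi\chi\geq 0$ then has the favorable sign.
\item Possible sphere components are excluded because every sphere bounds a ball and the flow surfaces are outward minimizing.
\item Existence of the proper weak solution requires a subsolution at infinity. I would build it from the ALH asymptotics, using the exact solution $r(t)\sim e^{t/2}$ on coordinate tori of the Kottler model.
\end{itemize}
I expect this step to be routine.

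\textbf{Step 3: the ALH end.} I would show that
\begin{equation*}
\limsup_{t\to\infty} m_H(\Sigma_t)\leq \frac{1}{4(4\pi)^{3/2}}\sup_{T^2}\mu\,|h_2|^{3/2}.
\end{equation*}
The plan is to follow Neves' analysis of IMCF in asymptotically hyperbolic manifolds.
\begin{itemize}[leftmargin=*]
\item First, show that the flow surfaces become graphs over the coordinate tori with controlled second fundamental form.
\item Then expand $4-H^2$ to order $r^{-3}$. After multiplying by $|\Sigma|^{1/2}\sim r|h_2|^{1/2}$, this produces $\int\mu\,dA_{h_2}$ times $|h_2|^{1/2}$, which is bounded by $\sup\mu\,|h_2|^{3/2}$.
\end{itemize}
The hypothesis $\mu\leq0$ is used here. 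Neves' counterexample shows that IMCF need not become round enough, so the limit is only controlled by a one-sided inequality, and $\sup\mu\leq0$ makes the bound robust to the averaging defect. I expect this to be the main obstacle: controlling the flow's asymptotic shape (an $r^{-1}$-close graph estimate) without the full Neves/Shi-type regularity. I would first try to obtain it from the toroidal symmetry of the model via barriers from $S_r$.

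\textbf{Step 4: the singular end.} Using the explicit metric \eqref{thm-ALH-g-asmp} on $U$, one computes
\begin{equation*}
H^2=4\Big(1-\frac{2m_1}{r^3}\Big),\qquad |S_r|=r^2|h_1|.
\end{equation*}
Hence $m_H(S_r)$ equals the regularized singular mass, i.e.\ matches the definition of $m_{\mathrm{reg}}(\{0\}\times T^2)$, exactly or in the limit $r\to0$. Combining this with Steps 1 and 3 gives
\begin{equation*}
m_{\mathrm{reg}}\leq m_H(S_{r_0})\leq \lim_{t\to\infty} m_H(\Sigma_t)\leq \frac{1}{4(4\pi)^{3/2}}\sup_{T^2}\mu\,|h_2|^{3/2}.
\end{equation*}
If $m_{\mathrm{reg}}$ is instead defined through harmonic conformal factors, as in Bray--Jauregui, I would insert one extra comparison. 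Conformally filling the singularity decreases $m_H$ of small tori, so one still gets $m_{\mathrm{reg}}\leq\liminf_{r\to0}m_H(S_r)$.
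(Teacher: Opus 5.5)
Your architecture matches the paper's: weak IMCF from a small coordinate torus, Geroch monotonicity of the toroidal Hawking mass (with $\chi(\Sigma_t)\le 0$ by irreducibility), the identity $m_H(S_r)=m_{\mathrm{reg}}(\{0\}\times T^2)$ at the singular end, and an upper bound at the ALH end. The genuine gap is Step~3, which is the only non-routine step, and the route you sketch for it would fail.

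Your expansion turns $|\Sigma_t|^{1/2}\int_{\Sigma_t}(4-H^2)\,dA$ into $2|h_2|^{1/2}\int_{T^2}\mu\,dA_{h_2}$. This presupposes that $\Sigma_t$ approaches coordinate tori, i.e.\ that $e^{-t/2}r|_{\Sigma_t}$ tends to a constant. As you note yourself, that is exactly the convergence that can fail for IMCF in asymptotically hyperbolic ends. Barriers from $S_r$ cannot supply it, and the metric has toroidal symmetry only to leading order. In the ALH end, $S_r$ has diameter of order $r$, while $S_r$ and $S_{2r}$ are at distance about $\log 2$. Comparison with coordinate tori therefore only traps $\Sigma_t$ between $S_{ae^{t/2}}$ and $S_{Ae^{t/2}}$ with $A/a$ bounded, not tending to $1$. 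Contrast the Condition C end of Section~3, where tori of diameter $r^{2/3}$ are small relative to the radial scale; that is what gives $A\le e^{Cr^{-1/3}}a$ there.

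The missing idea is to allow a non-constant limiting profile and let the sign of $\mu$ absorb it. Schematically, write $\Sigma_t\approx\{r=e^{t/2}\phi\}$ with $\phi>0$ on $T^2$. The $O(r^{-2})$ part of $H^2-4$ integrates to $-4\int_{T^2}\Delta_{h_2}\log\phi\,dA_{h_2}=0$, and the mass term gives
\begin{equation*}
\lim_{t\to\infty}m_H(\Sigma_t)=\frac{1}{4(4\pi)^{3/2}}\Big(\int_{T^2}\phi^2\,dA_{h_2}\Big)^{1/2}\int_{T^2}\mu\,\phi^{-1}\,dA_{h_2}.
\end{equation*}
In general this is not bounded by the mass term $\frac{|h_2|^{1/2}}{4(4\pi)^{3/2}}\int_{T^2}\mu\,dA_{h_2}$ that you aim for. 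When $\mu\le 0$, however, H\"older removes $\phi$: $\big(\int\phi^2\big)^{1/2}\int|\mu|\phi^{-1}\ge\big(\int|\mu|^{2/3}\big)^{3/2}$. More crudely, one can use $\int\mu\phi^{-1}\le\sup\mu\int\phi^{-1}$ together with $\big(\int\phi^2\big)^{1/2}\int\phi^{-1}\ge|h_2|^{3/2}$. This is the content of Lemma~3.13 of Lee--Neves, which the paper simply invokes:
$\lim_{t\to\infty}m_H(\Sigma_t)\le-\big(\frac{1}{4\pi}\int_{T^2}|\mu/4|^{2/3}dA_{h_2}\big)^{3/2}\le\frac{1}{4(4\pi)^{3/2}}\sup_{T^2}\mu\,|h_2|^{3/2}$.
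Existence and monotonicity are likewise quoted from Lee--Neves (Lemma~3.1, Corollary~3.4), after checking, as in part (i) of Proposition~\ref{global-existence}, that $S_\epsilon$ is outer-minimizing.

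Two smaller corrections.

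First, the normalization. Use $m_H(\Sigma)=\frac{|\Sigma|^{1/2}}{(16\pi)^{3/2}}\int_\Sigma(4-H^2)\,dA$; your factor $\frac{1}{4(4\pi)^{3/2}}$ is twice as large. With the correct factor, $m_H(S_r)=\frac{m_1}{(4\pi)^{3/2}}|h_1|^{3/2}$ for all small $r$. This equals $m_{\mathrm{reg}}$ exactly: take $\varphi=r^{1/2}$ and $\bar g=\frac{dr^2}{r^4-2m_1r}+h_1$, which is $ds^2+h_1$ in arclength, with $\bar\nu(\varphi)=\sqrt{-m_1/2}$. The paper cites Bray--Jauregui, Proposition~13, for this identity. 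The constant $\frac{1}{4(4\pi)^{3/2}}$ in the statement comes from $\mu=4m$ in the Kottler family, not from the normalization of $m_H$.

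Second, ``jumps only decrease area'' is not a valid justification here, since the $+4|\Sigma|$ term makes area loss unfavorable. For $t>0$ jumps preserve area anyway, and starting from an outer-minimizing torus removes the jump at $t=0$.
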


The notions of asymptotically locally hyperbolic manifolds, mass aspect function, and regular mass $m_{\mathrm{reg}}$ will be recalled in later sections. We emphasize that the assumption $\mu\leq0$ is essential for the argument. As mentioned earlier, inverse mean curvature flow
does not in general recover the total mass at infinity
in the ALH setting, since the limiting Hawking mass
may not agree with the asymptotic mass; see \cite{Neves2010IMCF}. When the mass aspect function is nonpositive, however, the inverse mean curvature flow still yields useful control of the mass at infinity, as shown by Lee and Neves \cite{Lee2013ThePI}. This allows the preceding argument to extend to the toroidal ALH setting under the sign assumption on $\mu$.

Moreover, since the level sets in our model are tori, additional phenomena involving topology and negative mass arise, just as the Horowitz-Myers soliton. These issues will be discussed in Section~\ref{mass-negative}.

\noindent\textbf{Acknowledgements}

The author is partially supported by China Postdoctoral Science Fundation 2025T180838.

The paper is organized as follows. In Section~2, we collect the necessary preliminaries and explain why our main Theorem~\ref{thm-main1} can be regarded as a positve mass type theorem. In Section~3, we use inverse mean curvature flow to prove Theorem~\ref{thm-main1}. In Section~4, we combine this inequality with a Shi--Tam type argument to obtain a Brown--York type inequality when \(\Sigma\) is a flat torus. We also explain why this argument does not extend directly to general \(\Sigma\). Finally, in Section~5, we give further interpretations and applications of our results, including their relation to relative positive mass, negative mass phenomena, and the Horowitz--Myers soliton. We also include a short proof for Theorem~\ref{thm-ALH-ZAS}.

\section{Preliminaries}

\subsection{Geometric Setup and Basic Computations}
\begin{proposition}\label{prop-curvature-computation}
    Suppose $g$ satisfies \textbf{Condition C}, let $X_i=\frac{1}{r^{2/n}}\partial_i$. As $r\rightarrow \infty$,
    \begin{equation}
    \begin{aligned}\label{sectional-curvature}
       R(\partial_r,X_i,X_i,\partial_r) &= \frac{2(n-2)}{n^2 r^2} +o(\frac{1}{r^2})\\[1ex]
     R(X_i,X_j,X_j,X_i)&= - \frac{4}{n^2 r^2} +o(\frac{1}{r^2})\quad i\neq j\\[1ex]
        \operatorname{Ric}_g &= \frac{2(n-1)(n-2)}{n^2 r^2} dr^2 - \frac{2(n-2)}{n^2} r^{\frac{4-2n}{n}} h+o(\frac{1}{r^2})
    \end{aligned}
\end{equation}
In particular, there's no $o(\frac{1}{r^2})$ term if the metric is exactly of the model (\ref{model}).
\end{proposition}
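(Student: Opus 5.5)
The plan is to compute everything exactly for the model \eqref{model} using the standard warped-product formulas, and then to show that the error term $E_2$ in \textbf{Condition C} changes each frame component by only $o(r^{-2})$.

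\textbf{Step 1: the model.} I would write $g=dr^2+f(r)^2h$ with $f=r^{2/n}$ and $h$ flat. In the orthonormal frame $\{\partial_r, X_i=r^{-2/n}\partial_i\}$, where $\partial_i$ are $h$-orthonormal coordinate fields, the warped-product formulas give
\[
R(\partial_r,X_i,X_i,\partial_r)=-\frac{f''}{f},\qquad R(X_i,X_j,X_j,X_i)=\frac{K_h-(f')^2}{f^2}=-\frac{(f')^2}{f^2},
\]
and all mixed components vanish. With $f'=\tfrac2n r^{2/n-1}$ and $f''=\tfrac2n(\tfrac2n-1)r^{2/n-2}$ this yields
\[
-\frac{f''}{f}=\frac{2(n-2)}{n^2r^2},\qquad -\frac{(f')^2}{f^2}=-\frac{4}{n^2r^2}.
\]

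\textbf{Step 2: Ricci in the model.} Tracing the sectional curvatures gives
\[
\operatorname{Ric}(\partial_r,\partial_r)=(n-1)\frac{2(n-2)}{n^2r^2},
\]
\[
\operatorname{Ric}(X_i,X_i)=\frac{2(n-2)}{n^2r^2}-(n-2)\frac{4}{n^2r^2}=-\frac{2(n-2)}{n^2r^2},
\]
and the off-diagonal components vanish. Converting back via $\partial_i=r^{2/n}X_i$, the tangential part becomes $-\tfrac{2(n-2)}{n^2}r^{-2}r^{4/n}h=-\tfrac{2(n-2)}{n^2}r^{(4-2n)/n}h$. This is exactly the formula in the statement, with no error terms. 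This establishes the final sentence of the proposition.

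\textbf{Step 3: the perturbation.} This step is the only real obstacle. Write $g=\bar g+r^{4/n}E_2$ in the coordinates $(r,\theta)$. The curvature in coordinates has the schematic form $g^{-1}\partial^2 g+g^{-1}\partial g\ast g^{-1}\partial g$. I would express it in the frame and compare term by term with the model.

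The hypotheses $|E_2|+r|\partial E_2|+r^2|\partial^2E_2|=o(1)$ imply that the frame metric coefficients differ from the identity by $o(1)$. They also imply that $\partial_r$-derivatives of the rescaled coefficients $h_2+E_2$ are $o(r^{-1})$ and $o(r^{-2})$, respectively. Each model term has size $r^{-2}$, built from $f'/f\sim r^{-1}$ and $f''/f\sim r^{-2}$. Replacing $h_2$ by $h_2+E_2$ therefore perturbs these terms by $o(r^{-2})$, including the cross terms of type $(f'/f)\cdot\partial_r E_2=O(r^{-1})\,o(r^{-1})$.

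Terms involving $\theta$-derivatives are even smaller. Converting to the frame costs a factor $r^{-2/n}$ per tangential derivative, so for example $r^{-4/n}\partial_\theta^2E_2=o(r^{-2-4/n})$. The intrinsic curvature of $h_2+E_2$ contributes $o(r^{-2})\cdot r^{-4/n}$.

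Collecting these estimates gives the three asymptotic formulas with $o(r^{-2})$ remainders. Here the Ricci remainder is understood in the orthonormal frame, i.e.\ componentwise with respect to $g$. The careful bookkeeping of the mixed $\partial_r\partial_\theta$ terms is the main thing to check.
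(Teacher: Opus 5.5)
Your proof is correct, but it takes a different route from the paper's.

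\textbf{The paper's route.} The paper does not use warped-product formulas. It switches to the variable $x$ of \eqref{r-x-variable} and writes $g=x^{4/(n-2)}\bar g$, where $\bar g=dx^2+(\frac{n-2}{n})^{4/n}h+E_2$ is a small perturbation of a flat cylinder. It then applies the conformal transformation law for $R(X,Y,Y,X)$ with the linear factor $u=x$. In these coordinates the decay hypothesis becomes $|E_2|+x|\partial E_2|+x^2|\partial^2E_2|=o(1)$. Hence $\bar R=o(x^{-2})$ by linearizing curvature at a flat metric, $\bar\nabla^2x=-\bar\Gamma\,\partial x=o(x^{-1})$, and $|\bar\nabla x|=1+o(1)$. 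The leading terms therefore come only from the first-derivative ($X(u)^2$ and $|\bar\nabla u|^2$) terms of the conformal formula. One converts back using $\partial_x=x^{2/(n-2)}\partial_r$ and $\partial_i=r^{2/n}X_i$.

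\textbf{What each approach buys.} The paper's approach does all perturbative bookkeeping on a nearly flat metric, where $\theta$-derivatives are already of unit scale. It also does not need $E_2$ to be purely tangential. Your approach computes the model exactly through $-f''/f$ and $-(f')^2/f^2$, which makes the ``no error term'' clause transparent, and then perturbs directly. It relies on Condition~C putting $g$ in the Gaussian normal form $dr^2+g_r$, which is the literal reading of the hypothesis.

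\textbf{Tightening your Step~3.} Step~3 is currently schematic; you can make it clean by using that normal form explicitly. Let $S=\frac{2}{nr}\,\mathrm{Id}+\frac12(h_2+E_2)^{-1}\partial_rE_2$ be the shape operator of $S_r$. Then:
\begin{itemize}
\item the Riccati identity $R(\partial_r,X,X,\partial_r)=-\langle(\partial_rS+S^2)X,X\rangle$ gives the radial formula, with exactly the cross terms $O(r^{-1})\cdot o(r^{-1})$ you identified;
\item the Gauss equation gives the tangential formula, since the intrinsic curvature of $r^{4/n}(h_2+E_2)$ is $o(r^{-2-4/n})$;
\item the Codazzi equation, whose only new input is $r^{-2/n}\partial_\theta\partial_rE_2=o(r^{-2-2/n})$, disposes of the mixed $\partial_r\partial_\theta$ terms you flagged, giving vanishing mixed Ricci components up to $o(r^{-2})$.
\end{itemize}
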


\begin{proof}

We use the conformal model so that $g=x^{4/(n-2)}(dx^2+ (\frac{n-2}{n})^{4/n}h+E_2)$ where $E$ is error term. Recall the formula of sectional curvature under conformal change $g=u^{4/(n-2)}\bar{g}$
\begin{equation}
\begin{aligned}
R(X, Y, Y, X) &= u^{\frac{4}{n-2}} \Bigg\{ \bar{R}(X, Y, Y, X) \\
&\quad - \frac{2}{(n-2)u} \Big[ |X|^2_{\bar{g}} \bar{\nabla}^2 u(Y, Y) + |Y|^2_{\bar{g}} \bar{\nabla}^2 u(X, X) - 2 \bar{g}(X, Y) \bar{\nabla}^2 u(X, Y) \Big] \\
&\quad + \frac{2n}{(n-2)^2 u^2} \Big[ |X|^2_{\bar{g}} Y(u)^2 + |Y|^2_{\bar{g}} X(u)^2 - 2 \bar{g}(X, Y) X(u) Y(u) \Big] \\
&\quad - \frac{4}{(n-2)^2 u^2} |\bar{\nabla} u|^2_{\bar{g}} \Big[ |X|^2_{\bar{g}} |Y|^2_{\bar{g}} - \bar{g}(X, Y)^2 \Big] \Bigg\}
\end{aligned}
\end{equation}
Consider the end at infinity. Let $r=\frac{n-2}{n}x^{n/(n-2)}$, then
\begin{equation}
    r\partial_r
    =
    \frac{n-2}{n}x\partial_x,
    \qquad
    r^2\partial_r^2
    =
    \frac{(n-2)^2}{n^2}x^2\partial_x^2
    -
    \frac{2(n-2)}{n^2}x\partial_x.
\end{equation}
and the second line of (\ref{g-asym}) becomes
\begin{equation}
      |E_2|+x|\partial_\alpha E_2|+x^2|\partial_\alpha\partial_\beta E_2|=o(1)
\end{equation}
with $\alpha,\beta\in \{x,\theta_1,\cdots, \theta_{n-1}\}$. By the linearization for curvature operator (\cite{Topping2006LecturesOT} for example), we have
\begin{equation}
\bar{R}(X,Y,Z,W)=o(\frac{1}{x^2})
\end{equation}
for all unit $X,Y$. Besides

\begin{equation}
    \begin{aligned}
    \frac{1}{x}\bar{\nabla}^2x(\partial_{\alpha},\partial_{\beta})&=-\frac{1}{x}\bar{\Gamma}_{\alpha\beta}^{\gamma}\partial_\gamma x=o(\frac{1}{x^2})\\
    |\bar{\nabla}u|&=1+o(1)
    \end{aligned}
\end{equation}
Therefore
\begin{equation}
    \begin{aligned}
R(\partial_x, \partial_{i},\partial_i,\partial_x) &= \frac{2}{(n-2)}(\frac{n-2}{n})^{4/n}x^{\frac{8-2n}{n-2}}+o(x^{\frac{8-2n}{n-2}})\\
R(\partial_{i}, \partial_{j},\partial_j,\partial_i) &=-\frac{4}{(n-2)^2}(\frac{n-2}{n})^{8/n}x^{\frac{8-2n}{n-2}}+o(x^{\frac{8-2n}{n-2}})
\end{aligned}
\end{equation}
And the proposition follows.
\end{proof}

\begin{proposition}
    The metric $(M,g)$ in (\ref{model}) satisfies the static equation with potential function $\frac{1}{x}$ or $(\frac{1}{r})^{(n-2)/n}$
\end{proposition}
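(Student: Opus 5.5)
Since the model metric \eqref{model} has no error term, the proof is a direct verification of \eqref{static-eqn} with $V=r^{(2-n)/n}$. I will work in the coordinates $(r,\theta)$ of \eqref{model} and use the exact curvature formulas of Proposition~\ref{prop-curvature-computation}. Then I will check that $V=(1/r)^{(n-2)/n}$ coincides with $1/x$ under the substitution \eqref{r-x-variable}. Up to the harmless positive constant $\big(\tfrac{n-2}{n}\big)^{(n-2)/n}$, this holds because $r^{(n-2)/n}$ is a constant multiple of $x$, and the static equation is linear in $V$.

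\textbf{Step 1 (Hessian of a radial function).} For $g=dr^2+f(r)^2h$ with $f=r^{2/n}$ and $h$ flat, the level sets $S_r$ are umbilic with second fundamental form $ff'h$. Hence, for $V=V(r)$,
\begin{equation*}
\nabla^2V=V''\,dr^2+V' f f' h,\qquad \Delta V=V''+(n-1)\frac{f'}{f}V'.
\end{equation*}
With $f'/f=\tfrac{2}{n r}$, $V=r^{-a}$ and $a=\tfrac{n-2}{n}$, this gives
\begin{equation*}
V'=-a r^{-a-1},\qquad V''=a(a+1)r^{-a-2}.
\end{equation*}

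\textbf{Step 2 (the $dr^2$ component).} The $dr^2$ component of $\Delta V g-\nabla^2V+V\operatorname{Ric}$ is
\begin{equation*}
(n-1)\tfrac{2}{nr}V'+V\operatorname{Ric}_{rr}=r^{-a-2}\Big(-\tfrac{2(n-1)(n-2)}{n^2}+\tfrac{2(n-1)(n-2)}{n^2}\Big)=0,
\end{equation*}
using the exact value of $\operatorname{Ric}_{rr}$ from \eqref{sectional-curvature}.

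\textbf{Step 3 (the tangential components).} Cross terms between $dr$ and $h$ vanish because all the tensors are diagonal. It therefore remains to check the $h$-component. Dividing by $r^{4/n}h$, which is the coefficient of $g$, the requirement is
\begin{equation*}
\Delta V-\frac{f'}{f}V'+V\,\frac{\operatorname{Ric}(X_i,X_i)}{1}=0.
\end{equation*}
Here $\operatorname{Ric}(X_i,X_i)=-\tfrac{2(n-2)}{n^2r^2}$, again from \eqref{sectional-curvature}. Substituting gives
\begin{equation*}
V''+(n-2)\tfrac{2}{nr}V'-\tfrac{2(n-2)}{n^2r^2}V
= r^{-a-2}\Big(a(a+1)-\tfrac{2(n-2)}{n}a-\tfrac{2(n-2)}{n^2}\Big).
\end{equation*}
With $a=\tfrac{n-2}{n}$ and $a+1=\tfrac{2n-2}{n}$, the bracket equals $\tfrac{n-2}{n^2}\big((2n-2)-2(n-2)-2\big)=0$.

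\textbf{Main obstacle.} There is no real difficulty here. The only point needing care is bookkeeping: Proposition~\ref{prop-curvature-computation} gives the Ricci tensor with respect to $h$ rather than the orthonormal frame $X_i$, so the normalizations must match. As an independent check, I would verify that the trace of the static equation gives $(n-1)\Delta V+VR=0$. Here $R=\tfrac{2(n-1)(n-2)}{n^2r^2}-\tfrac{2(n-1)(n-2)}{n^2r^2}=0$, so this says $\Delta V=0$. That is indeed true: $a(a+1)-\tfrac{2(n-1)}{n}a=a\big(a+1-\tfrac{2(n-1)}{n}\big)=0$. This confirms that the model is scalar flat with a harmonic static potential.
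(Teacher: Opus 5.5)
Your proof is correct and takes the same route as the paper: a direct computation of $\nabla^2V$ for $V=r^{-(n-2)/n}$, compared against the exact Ricci tensor from Proposition~\ref{prop-curvature-computation}. The paper only records $\nabla^2V=\frac{2(n-1)(n-2)}{n^2}r^{(2-3n)/n}\,dr^2-\frac{2(n-2)}{n^2}r^{(6-3n)/n}\,h$, which is exactly $V\operatorname{Ric}$; together with $\Delta V=0$ (your trace check), the static equation then reduces to $\nabla^2V=V\operatorname{Ric}$, and your componentwise verification is the same computation written out.
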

\begin{proof}
Set $V=r^{-(n-2)/n}$,and direct computation gives
\begin{equation}
\nabla^2V
=
\frac{2(n-1)(n-2)}{n^2}
r^{(2-3n)/n}\,dr^2
-
\frac{2(n-2)}{n^2}
r^{(6-3n)/n}\,h.
\end{equation}

\end{proof}

\subsection{Mass in Asymptotically Flat and ALH Manifolds}\leavevmode\newline

We briefly recall the definitions of mass for asymptotically flat and
asymptotically locally hyperbolic manifolds.
\begin{definition}
An $n$-dimensional Riemannian manifold $(M^n,g)$ is called
\emph{asymptotically flat} if, outside a compact set, there exists a
coordinate system $(x^1,\ldots,x^n)$ such that
\[
    g_{ij}=\delta_{ij}+O_2(|x|^{-\tau}),
    \qquad
    \tau>\frac{n-2}{2},
\]
together with the usual integrability assumption on the scalar
curvature. Its ADM mass is defined by
\begin{equation}\label{eq:ADM-mass}
    m_{\mathrm{ADM}}(g)
    =
    \frac{1}{2(n-1)\omega_{n-1}}
    \lim_{R\to\infty}
    \int_{S_R}
    \left(
        \partial_jg_{ij}-\partial_i g_{jj}
    \right)\nu^i\,dA_0,
\end{equation}
where $\omega_{n-1}=|\mathbb S^{n-1}|$ and $\nu$ is the Euclidean
outward unit normal. In particular, when $n=3$, the constant in front
is $1/(16\pi)$. 
\end{definition}
We refer to \cite{Bartnik1986} for the basic theory of
the ADM mass. The fundamental example for ADM mass is the Schwartzchild metric.
\begin{example}
For $m>0$,
    \begin{equation}\label{def-Schwartzchild}
        M^{n}_m=
\mathbb{R}^{n}\setminus
\left\{
|x|<
\left(\frac{m}{2}\right)^{\frac{1}{n-2}}
\right\},\quad 
\bar{g}_m=\left(1+\frac{m}{2}|x|^{2-n}
\right)^{\frac{4}{n-2}}g_E\\
    \end{equation}
    $(M,\bar{g})$ is a static with potential function $V=(1-\dfrac{m}{2}|x|^{2-n})(1+\dfrac{m}{2}|x|^{2-n})^{-1}$, and the parameter $m$ is the ADM mass of the Schwartzchild metric. And the boundary
       $\partial M_m^n=\{|x|=
\left(\frac{m}{2}\right)^{\frac{1}{n-2}}\big\}$
    is a minimal surface.
\end{example}
We next recall the asymptotically hyperbolic case. 
\begin{definition}\label{def-ALH}
    Let
$(N^{n-1},h)$ be either the round sphere or a flat torus, and set
\[
    k=
    \begin{cases}
        1, & (N,h)=(\mathbb S^{n-1},g_{\mathbb S^{n-1}}),\\
        0, & (N,h)=(T^{n-1},h_{\mathrm{flat}}).
    \end{cases}
\]
We say that an end a Riemannian manifold $(M,g)$ is asymptotically locally hyperbolic (ALH), with conformal
infinity $(N,h)$, if near infinity there is a defining function
$x\to0$ such that
\begin{equation}\label{eq:AH-expansion}
    g
    =
    \frac{1}{x^2}
    \left[
        dx^2+
        \left(1-\frac{kx^2}{4}\right)^2h
        +\frac{1}{n}\kappa x^n
        +O(x^{n+1})
    \right],
\end{equation}
where $\kappa$ is a symmetric $2$-tensor on $N$, and $x$ is called boundary defining function. The mass aspect
function is
\[
    \mu=\operatorname{tr}_h\kappa,
\]
 the Wang--Chru\'sciel--Herzlich mass, denoted by $m_{ALH}$, is
\begin{equation}\label{eq:AH-mass}
    m_{ALH}(g)
    =
    \frac{1}{2(n-1)\omega_{n-1}}
    \int_N \operatorname{tr}_h\kappa\,d\mu_h.
\end{equation}
\end{definition}

Thus, the above definition applies to both the spherical and toroidal
cases \cite{ChruscielHerzlich2003}\cite{Wang2001AHMass}. Below we provide with two examples.
\begin{example}
Let $(T^{n-1},h)$ be a flat torus. The $n$-dimensional toroidal
Kottler metric (see, for example, \cite{Birmingham1999,GallowaySuryaWoolgar2003}) is
\begin{equation}\label{def-Kottler}
    \bar{g}_m
    =
    \frac{dr^2}{f_m(r)}+r^2h,
    \qquad
    f_m(r)
    =
    r^2-\frac{2m}{r^{n-2}}.
\end{equation}
It has constant scalar curvature $-n(n-1)$. Apply the transformation $r=x^{-1}\left(1+\frac{m}{2}x^n\right)^{\frac{2}{n}}$, one direct sees  
\begin{equation}
    \begin{aligned}
        \kappa=2mh; &\quad \mu=2m(n-1)\\
        m_{ALH}(\bar{g}_m)&=\frac{m}{\omega_{n-1}}|h|
    \end{aligned}
\end{equation}

\end{example}
Another fundamental example is the
Horowitz--Myers soliton \cite{HorowitzMyers1998,GallowayWoolgar2015}.

\begin{example}\label{example-HM}
\begin{equation}\label{g-HM-asym}
        M=B^2\times T^{n-2},\quad
    g_{\mathrm{HM}}=
    \frac{1}{x^2}
    \left[
        dx^2+
        \left(1+\frac{x^n}{4}\right)^{4/n}
        \left(
            \left(
            \frac{1-x^n/4}{1+x^n/4}
            \right)^2d\xi^2
            +h_F
        \right)
    \right]
\end{equation}
where $\xi$ has period $4\pi/n$ so that the metric extends smoothly
across $r=1$, and $h_F$ is a flat metric on $T^{n-2}$. Its static potential is $V$ satisfying $x=4^{1/n}\big(V^{n/2}-\sqrt{V^n-1}\big)^{2/n}$ and
$R_{g_{\mathrm{HM}}}=-n(n-1)$.

Thus
\begin{equation}\label{g-HM-asym-2}
    \kappa=-(n-1)\,d\xi^2+h_F,
    \qquad
    \operatorname{tr}_{h}\kappa=-1,
    \qquad
    h=d\xi^2+h_F.
\end{equation}
Therefore
\begin{equation}
    m_{\mathrm{AH}}(g_{\mathrm{HM}})
    =
    -\frac{1}{2(n-1)\omega_{n-1}}|h|<0.
\end{equation}
\end{example}

\subsection{Mass Interpretations of the Main Inequality}\label{why-mass}\leavevmode\newline

We next explain why \textbf{Theorem \ref{thm-main1}} can naturally be interpreted as a mass related theorem. Consider a 3-dimensional manifold $(M^3,g)$ satisfying \textbf{Condition C}. For a closed hypersurface $\Sigma$, its Hawking mass is defined as

\begin{equation}\label{eq:hawking-mass}
    m_H(\Sigma)
    :=
    \sqrt{\frac{|\Sigma|}{16\pi}}
    \left(
        \frac{\chi(\Sigma)}{2}
        -
        \frac{1}{16\pi}
        \int_\Sigma H^2\,dA
    \right),
\end{equation}

Consider the $S_r = \{r\} \times T^2$ for $(M,g)$. As $r\to\infty$, we have
\begin{equation}\label{eq:slice-computation}
    H_{S_r}=\frac{4}{3r}+o(r^{-1}),\quad
    d\mu_{S_r}=r^{4/3}\bigl(1+o(1)\bigr)\,d\mu_{h_2},\quad
    |S_r|=r^{4/3}\bigl(|h_2|+o(1)\bigr).
\end{equation}
Since $\chi(S_r)=\chi(T^2)=0$, the topology-normalized Hawking
mass satisfies
\begin{equation}
\begin{aligned}
    m_H(S_r)
    &=
    -\sqrt{\frac{|S_r|}{16\pi}}
    \frac{1}{16\pi}
    \int_{S_r}H_{S_r}^2\,d\mu_{S_r}\\
    &=
    -\sqrt{\frac{r^{4/3}(|h_2|+o(1))}{16\pi}}
    \frac{1}{16\pi}
    \left(
        \frac{16}{9}r^{-2/3}|h_2|
        +o(r^{-2/3})
    \right)\\
    &=
    -\frac{|h_2|^{3/2}}{36\pi^{3/2}}+o(1).
\end{aligned}
\end{equation}
Therefore,
\begin{equation}\label{2-lim-Hawking}
    \lim_{r\to\infty}m_H(S_r)
    =
    -\frac{|h_2|^{3/2}}{36\pi^{3/2}}.
\end{equation}

The limit of the Hawking mass is closely related to the mass at infinity. In the asymptotically flat setting, the Hawking mass of large coordinate spheres converges to the ADM mass \cite{Huisken2001TheIM}. An analogous relation holds for canonical coordinate slices in the asymptotically hyperbolic setting, with the appropriate normalization, although convergence to the total mass may fail along inverse mean curvature flow \cite{Neves2010IMCF}. These relations motivate the following:
\begin{definition}\label{def-mass-F}
    Let $(M,g)$ satisfies \textbf{Condition C}, then its mass at infinity is defined as
    \begin{equation}
        m_F(g)= -\frac{|h_2|^{3/2}}{36\pi^{3/2}}.
    \end{equation}
\end{definition}

There is a second viewpoint based on the Hamiltonian mass flux associated
with a static reference metric, as developed in
\cite{ChruscielHerzlich2003}; see also \cite{HuangJang2022}.
In the standard asymptotic settings, this flux gives the familiar
ADM and Wang--Chru'sciel--Herzlich mass invariants.

Let $(M,\bar g,V)$ be a static triple and let $g$ be another metric.
Set $e=g-\bar g$. Following
\cite{ChruscielHerzlich2003,HuangJang2022}, the relative mass is defined by

\begin{equation}\label{eq:relative-mass}
m_{\bar g}(g,V)
=
\lim_{r\to\infty}
\int_{\Sigma_r}
\left[
V\left(
(\operatorname{div}_{\bar g}e)(\nu)
-\nu(\operatorname{tr}_{\bar g}e)
\right)
+
(\operatorname{tr}_{\bar g}e)\nu(V)
-
e(\nabla^{\bar g}V,\nu)
\right]dA_{\bar g}.
\end{equation}
In the standard setting, the Hamiltonian flux defines a relative mass under suitable asymptotic decay assumptions. In our setting, however, the two model metrics differ already at leading order, so we do not claim that the following quantity is the standard relative mass. Rather, we use the same flux expression as a natural static comparison between nearby model metrics.
We first take
\begin{equation}
    \bar g=dr^2+r^{4/3}h_1,
    \qquad
    V=r^{-1/3},
    \qquad
    g=dr^2+r^{4/3}h_2 .
\end{equation}
for some flat metric $h_1,h_2$ on $T^2$. Writing $e=r^{4/3}(h_2-h_1)$, we have
\begin{equation}
\begin{aligned}
    \operatorname{tr}_{\bar g}e
    &=\operatorname{tr}_{h_1}(h_2-h_1),\\
    (\operatorname{div}_{\bar g}e)(\partial_r)
    &=-\frac{2}{3r}\operatorname{tr}_{h_1}(h_2-h_1),\\
    \partial_rV&=-\frac{V}{3r},
    \qquad
    e(\nabla^{\bar g}V,\partial_r)=0
\end{aligned}
\end{equation}
Therefore,
\begin{equation}\label{eq:model-relative-mass}
    m_{\bar g}(g,V)
    =
    -\int_{T^2}
    \operatorname{tr}_{h_1}(h_2-h_1)\,d\mu_{h_1}.
\end{equation}

In particular, if $h_2=c\,h_1$, then
\begin{equation}
    m_{\bar g}(g,V)
    =
    2\bigl(|h_1|-|h_2|\bigr).
\end{equation}

For general $h_2$, let $h_t$, $t\in[0,1]$, be a smooth path of flat
metrics joining $h_1$ to $h_2$. Applying the same computation to
\begin{equation}
    \bar g_t=dr^2+r^{4/3}h_t,
    \qquad
    e_t=r^{4/3}\dot h_t,
\end{equation}
gives
\begin{equation}
    m_{\bar g_t}(e_t,V)
    =
    -\int_{T^2}
    \operatorname{tr}_{h_t}\dot h_t\,d\mu_{h_t}
    =
    -2\frac{d}{dt}|h_t|.
\end{equation}
This quantity measures the infinitesimal mass type deficit in the direction
$e_t$ relative to the background metric $\bar g_t$. We therefore define
\begin{equation}
\begin{aligned}
    m_{\mathrm{rel}}(h_1,h_2)
    &:=
    \frac12\int_0^1
    m_{\bar g_t}(e_t,V)\,dt\\
    &=
    |h_1|-|h_2|.
\end{aligned}
\end{equation}
In particular, the resulting quantity depends only on the endpoints
\(h_1\) and \(h_2\), and not on the choice of the path \(h_t\).
Hence the \textbf{Theorem \ref{thm-main1}} may be interpreted as the
nonnegativity of the relative mass-type quantity.

\section{Proof of Theorem~\ref{thm-main1}}
In this section we use Inverse Mean Curvature Flow (IMCF) to prove Theorem~\ref{thm-main1}. The classical IMCF is a parabolic flow $F$ that starts with a closed strictly mean convex hypersurface $\Sigma_0$ and evolves the hypersurface by the inverse of its mean curvature:

\begin{equation}\label{def-IMCF}
    \frac{\partial}{\partial_t}F=\frac{1}{H}\nu
\end{equation}
where $\nu$ is the outer normal vector. Alternatively, define arriving function $u$ by $u^{-1}(t)=\Sigma_t$, then (\ref{def-IMCF}) is equivalent to a degenerated elliptic equation of $u$
\begin{equation}\label{def-IMCF-u}
    \operatorname{div}\bigg(\frac{\nabla u}{|\nabla u|}\bigg)=|\nabla u|
\end{equation}

Furthermore, a smooth function $u$ is called sub solution of IMCF provided taht 

\begin{equation}\label{def-sub-IMCF-u}
    \operatorname{div}\bigg(\frac{\nabla u}{|\nabla u|}\bigg)\geq |\nabla u|
\end{equation}
and its called super solution if we have $\leq$ instead. If we assume $|\nabla u|>0$, then a sub-solution gives a flow that moves faster than the classical IMCF while super-solution moves slower.

The existence of classical IMCF has been intensively studied. For example, starting from a star-shaped, strictly mean convex hypersurface in $\mathbb{R^n}$, the classical IMCF exists for all time and remains strictly mean convex \cite{Gerhardt1990FlowON,Urbas1990OnTE}. Similar results are also obtained for anti-de-Sitter Schwartzchild space \cite{Lu2016InverseCF} and general warped prodct space satisfying certain restriction \cite{Zhou2016InverseMC}. However, in many natural settings---such as when the exterior region is not diffeomorphic to $\Sigma_0 \times [0, \infty)$---the smooth flow develops singularities and terminates in finite time. This makes it necessary to consider a weak version of the flow. 

Heuristically, the weak formulation introduced in \cite{Huisken2001TheIM} avoids singularity formation through a jumping mechanism. In this process, there are countably many times $t$ when the hypersurface $\Sigma_t$ immediately ``jumps'' to its outermost area-minimizing envelope.

Analytically, the weak formulation is established by finding suitable variational principles for the flow. The initial value problem of the weak IMCF is defined as follows:

\begin{definition}[\cite{Huisken2001TheIM}, p. 365]\label{def-weakIMCF}
Let $M$ be a connected, complete, non-compact manifold, and let $E_0 \subset M$ be a bounded smooth domain. A locally Lipschitz function $u$ on $M$ is called a \textbf{weak solution (subsolution, super solution respectively)} of the IMCF with initial condition $E_{T_0}$ if:
\begin{enumerate}
    \item $E_{T_0} = \{u < T_0\}$,
    \item For any compact set $K \Subset M \setminus E_{T_0}$ and any locally Lipschitz function $v$ ($v\leq u$, $v\geq u$ respectively) such that $\{u \neq v\} \subset K$, we have
    \begin{equation}
        \int_K \big(|\nabla u| + u|\nabla u|\big) \leqslant \int_K \big(|\nabla v| + v|\nabla u|\big).
    \end{equation}
\end{enumerate}

Besides, we call open set $E$ to be a minimizing hull if $E$ minimizes area on the outside (in $\Omega$), that is

\begin{equation}
    |\partial^*E\cap K|\leq |\partial^*F\cap K|
\end{equation}
for any $F$ containing $E$ such that $F\setminus E\Subset \Omega$, F, and any compact set $K$ containing $F \setminus E$. We call it strictly minimizing hull if equality implies $F\cap \Omega=E\cap \Omega \quad a.e.$
\end{definition}

The weak solution $u$ is called \textbf{proper} if the sublevel set $\{ s\leq u \leq t\}$ is bounded for all $s<t$. Besides, throughout this section, let $E_t\coloneqq \{u<t\}$, and let $E_t^+=\{u\leq t\}$. 

For our model metric (\ref{model}), if we starts from a level set of $r$ we get the solution
\begin{equation}
u=\frac{4}{3}\log r+T
\end{equation}
where $T$ is some fixed constant. If $(M,g)$ satisfies \textbf{Condition C}, given any $\epsilon$ and we define

\begin{equation}\label{sub-super-solution}
    u_{\pm,\epsilon}=(\frac{4}{3}\pm\epsilon)\log r
\end{equation}
 there exists $r_0$ so that $u_{-,\epsilon}$ is a sub-solution and $u_{+,\epsilon}$ is a super-solution for $r\geq r_0$. And the flow they corresponds to is $(S(\exp((\frac{4}{3}\pm \epsilon)^{-1}(t)))_{0<t<\infty}$. The existence of this subsolution goes to infinity and prevents IMCF from jumping to infinity instantly, and thus guarantees the existence of IMCF from a general mean-convex initial surface.

\begin{theorem}\label{IMCF-existence} Let $M$ be a complete, connected Riemannian $3$-manifold without boundary. Suppose there exists a proper subsolution. Then for any nonempty, connected, precompact, smooth open set minimizing hull $E_0$ in $M$, there exists a proper, locally Lipschitz solution $u$ of \textbf{Definition \ref{def-weakIMCF}} with initial condition $E_0$, which is unique on $M \setminus E_0$. Furthermore, we have the following

    \begin{enumerate}
\item For all $t \geqslant 0, \Sigma_t:=\partial E_t$ is a $C^{1, \alpha}$ hypersurface. The weak mean curvature of $\Sigma_t$ (denoted by $H$ ) exists for all $t$ and is equal to $|\nabla u|$ almost everywhere for almost every $t$, and $H>0$ almost everywhere for almost every $t$.

\item The area of $\Sigma_t$ grows exponentially: $\left|\Sigma_t\right|=e^t\left|\Sigma_0\right|$.

\item For all $t>0, E_t$ is connected and $M \backslash E_t$ has no compact connected component.

\item Let $R$ be the scalar curvature of $M$. Then

\begin{equation}\label{IMCF-monotonicity}
\begin{aligned}
\int_{\Sigma_{t_2}} H^2 \leqslant \int_{\Sigma_{t_1}} H^2- & \int_{t_1}^{t_2} \int_{\Sigma_s}\left[2 \frac{\left|\nabla_{\Sigma_s} H\right|^2}{H^2}+|\mathring{II}|^2\right] d s \\
& +\int_{t_1}^{t_2}\left[4 \pi \chi\left(\Sigma_s\right)-\int_{\Sigma_s} R-\frac{1}{2} \int_{\Sigma_s} H^2\right] d s, \quad \forall 0 \leqslant t_1<t_2
\end{aligned}
\end{equation}
where $\mathring{II}$ is the traceless second fundamental form.
\item $u$ satisfies the gradient estimate
\begin{equation}\label{IMCF-gradient-estimate}
|\nabla u(p)| \le \sup_{\partial E_{0} \cap B_s(p)} H_+ + \frac{C(n)}{s} , \quad \text{a.e. } x \in M \setminus E_0,
\end{equation}
\textit{for each $0 < s \le \sigma(p)$, where $\sigma(p) > 0$ is defined in \textbf{Definition \ref{sigma-radius}}.}

\item $E_t^+$ is strictly minimizing hull for each $t\geq 0$, $\{u<t\}$ is minimizing hull for each $t>0$. Besides, let $\Sigma_t^+=\partial E_t^+$, then
\begin{equation}
    \Sigma_t\rightarrow \Sigma_{t_0} \text{ as }t\nearrow t_0,\quad \Sigma_t\rightarrow \Sigma_{t_0}^+ \text{ as }t\searrow t_0
\end{equation}
\end{enumerate}

\end{theorem}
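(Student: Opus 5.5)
This statement is essentially the Huisken--Ilmanen existence and regularity theory for weak inverse mean curvature flow. My plan is to reduce to their results rather than re-derive them, checking at each step that only the existence of a proper subsolution is used, not asymptotic flatness.

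\emph{Existence and uniqueness.} First, I would follow the elliptic regularization scheme of \cite{Huisken2001TheIM}. On the bounded domains $F_L=\{v<L\}$, where $v$ is the given proper subsolution, solve the regularized Dirichlet problem
\[
\operatorname{div}\Big(\frac{\nabla u^\varepsilon}{\sqrt{|\nabla u^\varepsilon|^2+\varepsilon^2}}\Big)=\sqrt{|\nabla u^\varepsilon|^2+\varepsilon^2},
\]
with $u^\varepsilon=0$ on $\partial E_0$ and $u^\varepsilon=L-2$ on $\partial F_L$. The a priori gradient estimate is local: it comes from the Bochner formula together with the mean curvature of $\partial E_0$. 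It yields a bound of the form \eqref{IMCF-gradient-estimate}, uniform in $\varepsilon$ and $L$, on the scale $\sigma(p)$ of \textbf{Definition \ref{sigma-radius}}.

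The subsolution $v$ serves as a barrier: by the comparison principle $u^\varepsilon\ge v-C$, so the limit $u$ is proper and does not jump to infinity. This is exactly where the hypothesis enters; in our setting it is supplied by $u_{-,\epsilon}$ from \eqref{sub-super-solution}. Passing $\varepsilon\to0$ and $L\to\infty$ along a subsequence gives a locally Lipschitz weak solution in the sense of \textbf{Definition \ref{def-weakIMCF}}. Uniqueness on $M\setminus E_0$ follows from the variational comparison argument of \cite{Huisken2001TheIM}, which compares $\min(u,w)$ and $\max(u,w)$ in the energy functional.

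\emph{Properties (1), (3) and (6).} The minimizing property of the weak solution shows that each $E_t$ locally minimizes $|\partial^* F|+\int_F|\nabla u|$ among competitors. Hence $\Sigma_t$ is an almost-minimizer with bounded mean curvature. In dimension $3$, regularity theory for almost-minimizers gives $C^{1,\alpha}$ regularity, and the weak mean curvature equals $|\nabla u|$ almost everywhere. Property (6), the minimizing hull properties and the one-sided convergence $\Sigma_t\to\Sigma_{t_0}$ or $\Sigma_{t_0}^+$, follows from lower semicontinuity of area plus comparison with the outer envelope.

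For (3), a compact component of $M\setminus E_t$ could be filled in to decrease the energy, contradicting minimality. Connectedness of $E_t$ follows from connectedness of $E_0$ and the minimizing property.

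\emph{Properties (2) and (4).} For (2), the minimizing hull property gives $|\Sigma_t|=|\Sigma_t^+|$ for $t>0$. Combined with $\int_{\Sigma_t}H=\frac{d}{dt}|\Sigma_t|$ in the weak sense, this yields exponential area growth; we use that $E_0$ is itself a minimizing hull, so there is no jump at $t=0$.

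For (4), I would prove the Geroch-type monotonicity first for the smooth approximating level sets of $u^\varepsilon$. These are translating solutions in $M\times\mathbb{R}$, where the classical evolution of $\int H^2$ combined with the Gauss equation and Gauss--Bonnet applies. I would then pass to the limit using lower semicontinuity of $\int H^2$ and of the good terms $|\nabla_{\Sigma}H|^2/H^2$ and $|\mathring{II}|^2$. Since the argument is done on the $3$-dimensional slices, $\chi(\Sigma_s)$ appears, and in dimension $3$ it is bounded by that of a connected surface.

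\emph{Main obstacle.} The hardest step is the limit in (4). One has to justify the convergence of the $\int H^2$ terms and handle jump times, where $\int H^2$ can only decrease because the jump replaces the surface by its minimizing hull. Here I would cite \cite{Huisken2001TheIM} directly, since the argument is local once properness is available.
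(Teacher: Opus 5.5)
Your proposal takes the same route as the paper: the paper does not prove this theorem independently but imports it from Huisken--Ilmanen (elliptic regularization with the proper subsolution as barrier, plus their minimizing-hull, exponential-growth, connectedness and weak Geroch monotonicity results), remarking only that the singular inner end is harmless because it lies inside $E_0$ and that the gradient estimate survives replacing $\nabla^2 f\le 3g$ by $\nabla^2 f\le 5g$ in Definition~\ref{sigma-radius}, a modification you use implicitly when you claim the estimate on that scale. Two small slips in your sketch: $E_t$ minimizes $|\partial^*F\cap K|-\int_{F\cap K}|\nabla u|$ with a minus sign, which is what yields $H=|\nabla u|$ and makes filling in a compact component of $M\setminus E_t$ strictly decrease the functional; and Gauss--Bonnet is applied only to the limiting $2$-dimensional surfaces $\Sigma_t$, not to the $3$-dimensional regularized level sets in $M\times\mathbb{R}$.
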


The theorem also applies in our singular setting, since singularity is enclosed within the initial hypersurface.

\begin{definition}\label{sigma-radius}
 For any $p \in M$, define $\sigma(p) \in (0, \infty]$ to be the supremum of radii $r$ such that $B_r(p) \subset\subset M$,
\begin{equation}
\text{Rc} \ge -\frac{1}{100nr^2} \qquad \text{in } B_r(p),\label{sigma-radius-ric}
\end{equation}
and there exists a $C^2$ function $f$ on $B_r(p)$ such that
\begin{equation}\label{sigma-radius-d}
f(p) = 0, \qquad f \ge d_p^2 \qquad \text{in } B_r(p),
\end{equation}
yet
\begin{equation}\label{sigma-radius-hess}
|\nabla f| \le 3d_p \quad \text{and} \quad \nabla^2 f \le 5g \qquad \text{on } B_r(p),
\end{equation}
where $d_p$ is the distance to $p$.
\end{definition}
\begin{remark}
    In the original definition in \cite{Huisken2001TheIM}, it's required $\nabla^2 f \le 3g$ instead of $\nabla^2 f \le 5g$ in (\ref{sigma-radius-hess}). But if we replace $3$ by $5$, the same argument still work.
\end{remark}

We next derive a lower bound for $\sigma(p)$ that will be used in the gradient estimate \eqref{IMCF-gradient-estimate}.

\begin{lemma}\label{lemma-sigma-estimate}
For $p=(r_0,\vec{\theta}_0)$ with $r_0$ sufficiently large,
\begin{equation}\label{estiamte-sigma}
\sigma(p)\geq \frac{1}{1000}r_0.
\end{equation}
\end{lemma}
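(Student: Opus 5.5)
The plan is to rescale the metric so that the ball of radius $r_0/1000$ around $p$ becomes a ball of fixed size in a metric $C^2$-close to a fixed model. I then check the three requirements of Definition \ref{sigma-radius} for the radius $\rho:=r_0/1000$.

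\emph{Where the ball lives.} Since $g=dr^2+r^{4/3}(h_2+E_2)$ on the outer end, $r$ is a distance function there. Hence $B_\rho(p)\subset\{r\in(r_0-\rho,r_0+\rho)\}$. For $r_0$ large this lies in the end $M\setminus U_2$ and has compact closure, so $B_\rho(p)\subset\subset M$.

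\emph{The Ricci condition (\ref{sigma-radius-ric}).} By Proposition \ref{prop-curvature-computation} with $n=3$, the eigenvalues of $\operatorname{Ric}$ with respect to $g$ are $\frac{4}{9r^2}$ and $-\frac{2}{9r^2}$, up to $o(r^{-2})$. On $B_\rho(p)$ we have $r\ge \frac{999}{1000}r_0$, so $\operatorname{Ric}\ge -\frac{1}{4r_0^2}g$ for $r_0$ large. This is much larger than $-\frac{1}{300\rho^2}=-\frac{10^6}{300\,r_0^2}$, so (\ref{sigma-radius-ric}) holds with a huge margin.

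\emph{The function $f$.} Set $\tilde g=r_0^{-2}g$, $s=(r-r_0)/r_0$ and $\tilde h=r_0^{-2/3}h_2$. On the region $|s|<10^{-3}$,
\begin{equation*}
\tilde g=ds^2+(1+s)^{4/3}\big(\tilde h+E_2\big).
\end{equation*}
By (\ref{g-asym}), in coordinates adapted to the flat metric $\tilde h$, $\tilde g$ is $C^2$-close to the flat metric $ds^2+\tilde h$. The deviation of $(1+s)^{4/3}$ from $1$ is $O(10^{-3})$ in $C^2$, and $E_2$ contributes $o(1)$ as $r_0\to\infty$.

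The difficulty is the topology. The flat torus $(T^2,\tilde h)$ has diameter of order $r_0^{-1/3}$, far smaller than the rescaled radius $10^{-3}$. So $B_\rho(p)$ wraps around the torus many times, and $d_p^2$ has cut locus inside the ball. Consequently no $C^2$ function on the ball itself can dominate $d_p^2$ near cut points of this kind. I therefore plan to use the fact that the Huisken--Ilmanen gradient estimate (\ref{IMCF-gradient-estimate}) is local and invariant under Riemannian coverings. On the end $(r_0/2,\infty)\times T^2$ I pass to the universal cover $(r_0/2,\infty)\times\mathbb R^2$, lift $u$, and verify the definition of $\sigma$ there. This is the step I expect to be the main point needing care, in particular checking that the lifted weak solution is still a local weak solution in the sense of Definition \ref{def-weakIMCF}. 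The gradient estimate only uses local competitors supported in $B_s(p)$, so I expect this to go through.

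On the cover the coordinates $(s,y)$ are global, with $y$ Euclidean coordinates for $\tilde h$. I take
\begin{equation*}
f=r_0^2\,(1+\delta)\big(s^2+|y-y_0|^2\big)
\end{equation*}
for a small fixed $\delta$, say $\delta=1/10$. Because $\tilde g$ is $C^2$-close to the Euclidean metric on a ball of radius $10^{-3}$, the Euclidean radius satisfies $|x-x_0|^2\ge (1-\eta)\tilde d_p^2$ with $\eta\to0$ as the closeness improves. This gives $f(p)=0$ and $f\ge d_p^2$. Likewise $|\nabla f|\le 2(1+\delta)(1+\eta)d_p\le 3d_p$. For the Hessian, $\nabla^2 f = 2(1+\delta)g+(\text{Christoffel error})\cdot |x-x_0|$, so $\nabla^2 f\le 2.5\,g\le 5g$. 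All three conditions in (\ref{sigma-radius-d}) and (\ref{sigma-radius-hess}) therefore hold, with ample room, once $r_0$ is large, and this yields (\ref{estiamte-sigma}).
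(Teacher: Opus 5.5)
Your estimates on the cover are fine, but they prove a different statement. In \eqref{estiamte-sigma}, $\sigma(p)$ is the quantity of Definition~\ref{sigma-radius} computed in $M$. You explicitly abandon $M$ and verify the definition only at a lift of $p$ in $(r_0/2,\infty)\times\mathbb{R}^2$.

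The reason you give for doing so is false. A cut point $q\neq p$ of $d_p$, including one created by the ball wrapping around the small torus, is a \emph{concave} kink. Indeed, if $\gamma$ is a minimizing geodesic from $p$ to $q$, then $\epsilon+d(\gamma(\epsilon),\cdot)$ is smooth near $q$, lies above $d_p$, and touches it at $q$. Hence the paper's Hessian comparison, $\nabla^2 d_p^2\le 2d_p\cdot\frac{501}{500d_p}\,g+2\,dd_p\otimes dd_p\le 4.01\,g$, holds in the barrier sense on all of $B_\rho(p)\setminus\{p\}$. Definition~\ref{sigma-radius} only asks for $f\ge d_p^2$ together with \emph{upper} bounds on $|\nabla f|$ and $\nabla^2 f$. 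A semiconcave function can be smoothed from above while keeping its Hessian upper bound up to an arbitrarily small error. That is exactly the Greene--Wu step in the paper: leave $d_p^2$ unchanged near $p$, where it is smooth, and use the slack $4.01<5$ and $2d_p<3d_p$.

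Wrapping is concretely no obstruction. On the flat cylinder $\mathbb{R}\times S^1_L$ with $L\ll\rho$, take $f=s^2+\psi(t)$, where $\psi(t)=\int_0^{|t|}3\tau\beta(\tau)\,d\tau$ and $\beta$ is a nonincreasing cutoff equal to $1$ on $[0,0.45L]$ and to $0$ near $L/2$. Then:
\begin{enumerate}
\item $f$ is smooth and $f(p)=0$;
\item $f\ge s^2+t^2=d_p^2$, since $\psi=\tfrac32t^2$ on $[0,0.45L]$ and $\psi\ge\psi(0.45L)>L^2/4$ on $[0.45L,L/2]$;
\item $|\nabla f|^2\le 4s^2+9t^2\le 9d_p^2$ and $\nabla^2 f\le 3g$.
\end{enumerate}
This holds even though the ball wraps around the circle many times.

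Consequently your argument does not establish \eqref{estiamte-sigma}. It also makes the later use of the lemma depend on a claim you flag but do not prove: that \eqref{IMCF-gradient-estimate} holds for the lifted solution with $\sigma$ computed on the cover. Definition~\ref{def-weakIMCF} and Theorem~\ref{IMCF-existence} are formulated for bounded initial sets in a complete manifold, so this would have to be argued through the regularized approximants.

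The repair is to stay on $M$. You can follow the paper's Hessian-comparison-plus-smoothing argument, now justified across the cut locus by the barrier remark above. Alternatively, transplant your explicit construction: replace $|y-y_0|^2$ by a two-dimensional analogue $\Psi$ of $\psi$ on $(T^2,h_2)$, with $\Psi$ smooth, $\Psi(\theta_0)=0$, $\Psi\ge\operatorname{dist}_{h_2}(\cdot,\theta_0)^2$, $|\nabla\Psi|\le\tfrac52\operatorname{dist}_{h_2}(\cdot,\theta_0)$ and $\nabla^2\Psi\le 3h_2$. Then set $f=(1+\delta)\bigl((r-r_0)^2+r_0^{4/3}\Psi(\theta)\bigr)$. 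Comparing $g$ with $dr^2+r_0^{4/3}h_2$ on $\{|r-r_0|<\rho\}$ then verifies \eqref{sigma-radius-d} and \eqref{sigma-radius-hess} on $M$ itself.

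A minor correction: $(1+s)^{4/3}-1$ is $O(10^{-3})$ only in $C^0$, while its $s$-derivatives are $O(1)$. So your $\eta$ is $O(10^{-3})$ rather than tending to $0$, and the Christoffel error in $\nabla^2 f$ is small only because it is multiplied by $|x-x_0|\lesssim 10^{-3}$. With $\delta=1/10$ this is harmless.
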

\begin{proof}

Fix a $p=(r_0,\theta_0)$ and $r_0$ large and let $A\left(r_1,r_2 \right)=\left\{r_1<r<r_2\right\}$ be the annular region. Since $\lim_{r\rightarrow \infty}|\nabla r|=1$, for another point $q=(r,\vec{\theta})\in B(p,\frac{1}{1000}r_0)$, we have 

\begin{equation}
    |r(q)-r(p)|<\frac{1001}{1000}d(p,q)\leq \frac{2}{1000}r_0
\end{equation}
Therefore by (\ref{sectional-curvature})
\begin{equation}
 \quad|Sec | \leq \frac{1}{r^2} \leq(\frac{1006}{1000})^2\frac{1}{r_0^2}
\end{equation}
which verifies (\ref{sigma-radius-ric}). Consider $d_p^2$, and by Hesian commarison, away from the cut-locus

\begin{equation}
\begin{aligned}
\nabla^2 d_p & \leq \frac{1006}{1000 r_0} \operatorname{coth}\left(\frac{1006}{1000r_0} d_p\right) \cdot g \\
&  \leq \frac{1}{d_p}\left(1+\frac{1006}{1000r_0} d_p\right)g \leq \frac{501}{500d_p}g
\end{aligned}
\end{equation}
We use $t\coth(t)\leq 1+t$ in the second line. As a result 
\begin{equation}
\nabla^2 d_p^2=2 d_p D^2 d_p+2 d(d_p) \otimes d(d_p)<5g
\end{equation}
Thus $d_p^2$ satisfies the estimates in
\eqref{sigma-radius-hess} away from the cut locus. By the smooth
approximation technique of Greene--Wu
\cite{Greene1979CinftyA}, one can smooth $d_p^2$ from above to obtain
a smooth function $f$ on $B(p,r_0/1000)$ and (\ref{sigma-radius-d})(\ref{sigma-radius-hess}) is preserved.

\end{proof}

The following comparison property follows from
\cite[Theorem~2.2]{Huisken2001TheIM}.

\begin{lemma}\label{IMCF-comparison}

Let $(M,g)$ satisfies \textbf{Condition C}, $u_{\pm,\epsilon}$ are weak super(sub) solutions. Then $u_{\pm,\epsilon}$ satisfies the comparison principle in the following sense: Let $u$ is a weak solution of IMCF with initial condition $E_0$., 

\begin{enumerate}
    \item if $\{p\in M:u(p)\leq t\}\subset \{p\in M:u_{-,\epsilon}(p)\leq t'\}$, then $\{p\in M:u(p)\leq t+\tau\}\subset \{p\in M:u_{-,\epsilon}(p)\leq t'+\tau\}$ for any $\tau\geq 0$
    \item  if $\{p\in M:u_{+,\epsilon}(p)\leq t'\}\subset\{p\in M:u(p)\leq t\}$, then $\{p\in M:u_{+,\epsilon}(p)\leq t'+\tau\}\subset \{p\in M:u(p)\leq t+\tau\}$ for any $\tau\geq 0$
\end{enumerate}
\end{lemma}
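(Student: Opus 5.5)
The plan is to reduce both statements to the comparison principle of Huisken--Ilmanen \cite[Theorem~2.2]{Huisken2001TheIM}. That principle says: if $v$ is a weak subsolution and $w$ a weak supersolution on an open set $\Omega$, with $\{v>w\}\Subset\Omega$ (or suitable behaviour at the boundary/infinity), then $v\le w$ on $\Omega$. It holds for locally Lipschitz functions; in the statement of Theorem~2.2 this requires $v$ to be proper, or the set $\{v>w\}$ to be precompact.

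The first step is to check that $u_{\pm,\epsilon}$ really are weak super/subsolutions in the sense of Definition~\ref{def-weakIMCF} on $\{r\ge r_0\}$. By Proposition~\ref{prop-curvature-computation} and \eqref{eq:slice-computation}, the level sets $S_r$ have
\[
H=\frac{4}{3r}+o(r^{-1}),\qquad |\nabla u_{\pm,\epsilon}|=\Bigl(\frac43\pm\epsilon\Bigr)\frac1r\,(1+o(1)).
\]
Hence $\operatorname{div}(\nabla u/|\nabla u|)=H_{S_r}$ compares with $|\nabla u|$ with the correct strict sign for $r\ge r_0$. A smooth classical sub/supersolution with nonvanishing gradient is a weak one: integrate the differential inequality against $v-u$ and use convexity of $|\cdot|$, exactly as in \cite[Lemma~2.2/Theorem~2.2]{Huisken2001TheIM}.

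Next, for part (1), shift time. Set $w=u_{-,\epsilon}+(t-t')$. This is again a subsolution, since the equation is invariant under adding constants. The goal is the inequality $u\ge w$ on $\{u\ge t\}$, which gives $\{u\le t+\tau\}\subset\{w\le t+\tau\}=\{u_{-,\epsilon}\le t'+\tau\}$.

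To get it, apply the comparison on $\Omega=M\setminus E_t^+$, with $u$ as the supersolution and $w$ as the subsolution. On $\partial\Omega$ we have $u=t$, and by hypothesis $w\le t$ there. To control the bad set near infinity, compare $w$ with $w-\delta$, or use $w\le u+\delta$ eventually. The latter should follow from the gradient estimate \eqref{IMCF-gradient-estimate} together with Lemma~\ref{lemma-sigma-estimate}, which give $|\nabla u|\le C/r$ at large $r$, so $u\le C\log r$. This is not quite enough on its own; the cleaner route is the following.

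Since $\epsilon$ is arbitrary, run the comparison with $u_{-,\epsilon'}$ for $\epsilon<\epsilon'$. Then $(u_{-,\epsilon}-u_{-,\epsilon'})\to\infty$, which makes $\{w-\delta>u\}$ precompact because $u$ is proper and grows at most like $(\frac43+o(1))\log r$. Let $\delta\to0$ to conclude.

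Part (2) is symmetric. The subsolution is $u$ restricted to $\{u\ge t\}$, and the supersolution is $u_{+,\epsilon}+(t-t')$. The boundary containment is given by the hypothesis, and at infinity $u_{+,\epsilon}$ grows faster, so the set where the subsolution exceeds the supersolution stays bounded.

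The main obstacle is the behaviour at infinity, i.e. ensuring the bad set is precompact. Theorem~2.2 of \cite{Huisken2001TheIM} requires it, and here it needs the two-sided logarithmic growth of the weak solution $u$. The upper growth comes from the gradient bound via Lemma~\ref{lemma-sigma-estimate}. The lower growth comes from properness together with exponential area growth $|\Sigma_t|=e^t|\Sigma_0|$ compared with $|S_r|\sim r^{4/3}|h_2|$.
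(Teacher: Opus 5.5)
Your overall route, reducing to \cite[Theorem~2.2]{Huisken2001TheIM}, is the paper's; the paper gives nothing beyond that citation. Several of your steps are fine: the check that $u_{\pm,\epsilon}$ are weak sub/supersolutions on $\{r\ge r_0\}$, the time shift $w=u_{-,\epsilon}+(t-t')$, and the ordering $w\le t=u$ on $\partial\{u>t\}$.

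The gap is at the step you yourself call the main obstacle, and the argument you give there fails.
\begin{itemize}
\item In part (1) the bad set is $\{w>u\}$, so its precompactness needs a \emph{lower} bound $u\ge(\frac43-\epsilon')\log r-C$ near infinity. Properness gives no rate. Both the gradient bound $u\le C\log r$ and your asserted ``$u$ grows at most like $(\frac43+o(1))\log r$'' control the wrong side, so passing to $\epsilon'>\epsilon$ changes nothing.
\item In part (2) you need the sharp upper bound $u\le(\frac43+\epsilon)\log r+C$. The gradient estimate with $s=r/1000$ only gives $|\nabla u|\le C/r$ with a large constant, as you concede, and nothing else in the proposal supplies the constant $\frac43$.
\item Your closing remark on area growth does not fix this. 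The identity $|\Sigma_t|=e^t|\Sigma_0|$ alone does not locate $\Sigma_t$ pointwise, since a thin finger costs little area. One can in fact derive $u=\frac43\log r+O(1)$ by combining it with the slice oscillation bound $\operatorname{osc}_{S_R}u\le CR^{-1/3}$, with the minimizing-hull property of $E_s$ (upper bound), and with the absence of compact components of $M\setminus E_s$ (lower bound). That argument is not in your proposal.
\end{itemize}

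The missing idea is that no growth rate is needed; properness suffices once you truncate the subsolution.

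\emph{Truncation preserves subsolutions.} If $w$ is a weak subsolution, so is $\min(w,T)$. In the level-set formulation, take $s\le T$ and an outward competitor $F\supseteq\{w<s\}$. The inequality required of $\min(w,T)$ is the one for $w$ with the bulk term $\int_{F\setminus\{w<s\}}|\nabla w|$ replaced by the smaller $\int_{F\setminus\{w<s\}}|\nabla w|\,\chi_{\{w<T\}}$, so it is weaker. For $s>T$ there is nothing to check.

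\emph{Part (1).} The bad set satisfies $\{\min(w,T)-\delta>u\}\subseteq\{t<u<T\}$, which is bounded because $u$ is proper. It also stays away from $\partial\{u>t\}$, since $w-\delta<u$ on a neighbourhood of that boundary. The comparison principle of Theorem~2.2 (in its sub/supersolution form) then gives $\min(w,T)-\delta\le u$, and letting $T\to\infty$, $\delta\to0$ yields (1).

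\emph{Part (2).} Truncate the weak solution instead, and compare $\min(u,T)$ with $u_{+,\epsilon}+(t-t')+\delta$ on $\{u_{+,\epsilon}>t'\}$. The bad set lies in $\{u_{+,\epsilon}<T-t+t'\}$, which is bounded because $u_{+,\epsilon}$ is proper.

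This is how properness substitutes for precompactness in a comparison principle of this type. It is why the paper can simply cite Theorem~2.2 without proving any growth estimate for $u$.
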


\begin{lemma}(Geroch Monotonicity) Given a $C^1$ surface $\Sigma$, define the Hawking mass to be
    \begin{equation}\label{def-Hawking}
        m_H(\Sigma)=-\sqrt{\frac{|\Sigma|}{(16\pi)^3}}\int_{\Sigma}H^2dA
    \end{equation}
    Let $u$ be a weak IMCF from \textbf{Theorem \ref{IMCF-existence}}, and $\Sigma_t=\partial\{u<t\}$, then
    \begin{equation}\label{Geroch-monotonicity}
\begin{aligned}
m_H(\Sigma_{t_2})-m_H(\Sigma_{t_1})\geq  & \frac{1}{(16\pi)^{3/2}}\bigg(\int_{t_1}^{t_2}|\Sigma_{s}|^{1/2} \int_{\Sigma_s}\left[2 \frac{\left|\nabla_{\Sigma_s} H\right|^2}{H^2}+|\mathring{II}|^2dA\right] d s \\
& +\int_{t_1}^{t_2}|\Sigma_s|^{1/2}\left[-4 \pi \chi\left(\Sigma_s\right)+\int_{\Sigma_s} RdA\right] d s\bigg), \quad \forall 0 \leqslant t_1<t_2
\end{aligned}
\end{equation}
\end{lemma}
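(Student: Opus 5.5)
We derive the inequality from the integrated monotonicity formula \eqref{IMCF-monotonicity} of Theorem~\ref{IMCF-existence}, written in terms of the modified Hawking mass \eqref{def-Hawking}. Set $A(t)=|\Sigma_t|$, $W(t)=\int_{\Sigma_t}H^2\,dA$, and $Q(t)=\int_{t}^{\infty}\!\!\int_{\Sigma_s}\big[2|\nabla_{\Sigma_s}H|^2H^{-2}+|\mathring{II}|^2\big]$ for the first (nonnegative) bracket. By item (2), $A(t)=e^{t}A(0)$, so $m_H(\Sigma_t)=-(16\pi)^{-3/2}A(0)^{1/2}e^{t/2}W(t)$. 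The key computation is
\begin{equation*}
\frac{d}{dt}\Big(e^{t/2}W(t)\Big)=e^{t/2}\Big(W'(t)+\tfrac12 W(t)\Big).
\end{equation*}
Formally, \eqref{IMCF-monotonicity} gives
\begin{equation*}
W'\le -\int_{\Sigma_t}\Big[2\tfrac{|\nabla H|^2}{H^2}+|\mathring{II}|^2\Big]+4\pi\chi(\Sigma_t)-\int_{\Sigma_t}R-\tfrac12 W.
\end{equation*}
Substituting this in, we get $\frac{d}{dt}(e^{t/2}W)\le e^{t/2}\big(4\pi\chi(\Sigma_t)-\int_{\Sigma_t} R-[\ldots]\big)$. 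Multiplying by $-(16\pi)^{-3/2}A(0)^{1/2}$ and using $A(0)^{1/2}e^{s/2}=|\Sigma_s|^{1/2}$ gives the pointwise-in-time inequality
\begin{equation*}
\frac{d}{dt}m_H(\Sigma_t)\ge (16\pi)^{-3/2}|\Sigma_t|^{1/2}\Big([\ldots]-4\pi\chi(\Sigma_t)+\int_{\Sigma_t} R\Big).
\end{equation*}
Integrating from $t_1$ to $t_2$ yields \eqref{Geroch-monotonicity}.

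We cannot differentiate $W$ directly, because it is only known to satisfy the integrated inequality \eqref{IMCF-monotonicity} and may jump at the countably many jump times. We therefore argue with integrated forms throughout. Let $F(t)=\int_t^{\infty}(\ldots)$ denote the combined integrand $\Phi(s)=\int_{\Sigma_s}[\ldots]-4\pi\chi(\Sigma_s)+\int_{\Sigma_s}R$. Then \eqref{IMCF-monotonicity} says that
\begin{equation*}
W(t)+\tfrac12\int_0^t W+\int_0^t\Phi
\end{equation*}
is nonincreasing, i.e.\ $W(t_2)-W(t_1)\le -\int_{t_1}^{t_2}(\tfrac12W+\Phi)$ for all $t_1<t_2$. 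From this we show that $G(t):=e^{t/2}W(t)+\int_0^t e^{s/2}\Phi(s)\,ds$ is nonincreasing. We use a Grönwall/product-rule argument for functions of bounded variation. First, \eqref{IMCF-monotonicity} shows that $W$ is locally bounded from above, and $\Phi$ is locally integrable since $\chi(\Sigma_s)$ is bounded (by Gauss--Bonnet and the regularity in item (1)) and $R$ is smooth. Then $W$ is a BV function whose distributional derivative satisfies $DW\le -(\tfrac12W+\Phi)\,dt$ as measures. Multiplying by the smooth positive function $e^{t/2}$ and applying the Leibniz rule for $e^{t/2}\cdot W$, with $e^{t/2}$ smooth, gives $D(e^{t/2}W)=e^{t/2}DW+\tfrac12e^{t/2}W\,dt\le -e^{t/2}\Phi\,dt$. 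Hence $G$ is nonincreasing, which is exactly the claimed inequality after rescaling by $-(16\pi)^{-3/2}A(0)^{1/2}$.

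The main obstacle is making the use of \eqref{IMCF-monotonicity} valid at every pair $t_1<t_2$, including jump times, and checking that the sign conventions at those times match the direction of the inequality. At a jump, $\Sigma_t\to\Sigma_t^+$, and $\int H^2$ can only decrease, since $\Sigma_t^+$ is the outward minimizing hull and has $H=0$ on the new parts. Area is continuous by item (2). Both facts go in the favorable direction for $m_H$, so left/right limits cause no loss. If one prefers to avoid BV calculus, the alternative is to partition $[t_1,t_2]$ finely, apply \eqref{IMCF-monotonicity} on each subinterval while freezing the factor $e^{s/2}$ at an endpoint, sum the pieces, and pass to the limit using local integrability of $W$ and $\Phi$. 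This reproduces Huisken--Ilmanen's derivation of Geroch monotonicity.
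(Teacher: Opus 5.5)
Your proposal is correct and takes the same route as the paper, whose proof is the one-line remark that the inequality follows from items (2) and (4) of Theorem~\ref{IMCF-existence}. Your computation, writing $m_H(\Sigma_t)=-(16\pi)^{-3/2}|\Sigma_0|^{1/2}e^{t/2}\int_{\Sigma_t}H^2$ and applying the integrating factor $e^{t/2}$ to the integrated form of (4) via the BV Leibniz rule or the partition argument, is exactly that derivation, and it supplies the justification for all $t_1<t_2$ that the paper leaves implicit.
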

\begin{proof}
    The inequality follows directly from (2)(4) of \textbf{Theorem \ref{IMCF-existence}}.
\end{proof}

\begin{proposition}\label{global-existence}
Under the assumptions of Theorem 1.3, there exists a function
$u$ on $M$ such that for every $T\in\mathbb{R}$, $u-T$ is a weak solution of IMCF
with initial condition $\{u<T\}$, and 
\begin{equation}\label{global-existence-ends-estimate}
    u\rightarrow -\infty \text{ as }r\rightarrow 0;\quad u\rightarrow \infty \text{ as }r\rightarrow \infty
\end{equation}
 Moreover, for almost every $t$,
\begin{equation}
    \chi(\Sigma_t)\leq0.
\end{equation}
Consequently, $m(\Sigma_t)$ is monotonically non-decreasing and
\begin{equation}\label{estimate-Hawking}
    m(\Sigma_t)
    \geq
    -\frac{1}{36\pi^{3/2}}|h_1|^{3/2}
    \qquad
    \forall\,t.
\end{equation}
\end{proposition}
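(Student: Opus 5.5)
Following Huisken--Ilmanen's construction of IMCF from a point, the global solution $u$ is produced by running weak IMCF from slices $S(\delta)$ near the singular end and letting $\delta\to0$. For each small $\delta$, the set $E^\delta_0=\{r<\delta\}$ is a smooth precompact-relative domain whose boundary has positive mean curvature, $H=\frac{4}{3r}(1+O(r))$ by the first line of \eqref{g-asym}. Replacing it by its strictly minimizing hull if necessary, Theorem~\ref{IMCF-existence} applies, since the singular end is enclosed in the initial set and $u_{-,\epsilon}$ furnishes a proper subsolution at infinity. This yields a weak solution $u_\delta$ with $u_\delta=\frac43\log\delta$ on $S(\delta)$. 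The normalization is chosen so that on the model \eqref{model} one would have $u_\delta=\frac43\log r$.

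To pass to the limit $\delta\to0$ we need locally uniform Lipschitz and $L^\infty$ bounds on compact subsets of $M$.
\begin{enumerate}
\item \emph{Gradient bounds.} Near the singular end, $H_+$ on $S(\delta)$ is of order $1/\delta$, but \eqref{IMCF-gradient-estimate} localizes. For $p$ at distance of order $r(p)$ from $S(\delta)$, the radius $s$ in \eqref{IMCF-gradient-estimate} can be taken comparable to $r(p)$, giving $|\nabla u_\delta(p)|\le C/r(p)$. Near infinity, Lemma~\ref{lemma-sigma-estimate} gives the same $C/r$ bound.
\item \emph{$L^\infty$ bounds.} Barriers built from the analogues of $u_{\pm,\epsilon}=(\frac43\pm\epsilon)\log r$ near $r=0$ are sub- and supersolutions there, using the curvature asymptotics of Proposition~\ref{prop-curvature-computation} at the small end with $O(r)$ errors. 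Combined with the comparison principle of Lemma~\ref{IMCF-comparison}, they pin $u_\delta$ between $(\frac43\pm\epsilon)\log r$ plus constants independent of $\delta$, on both ends. This also yields \eqref{global-existence-ends-estimate}.
\end{enumerate}
Arzelà--Ascoli then gives a subsequence converging locally uniformly to $u$. The compactness theorem for weak solutions in \cite{Huisken2001TheIM} shows that $u-T$ is a weak solution with initial condition $\{u<T\}$ for every $T$.

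For the topological claim, fix $t$ and recall that $E_t$ is connected and $M\setminus E_t$ has no compact components, by Theorem~\ref{IMCF-existence}(3), which persists in the limit. Suppose some component of $\Sigma_t$ had positive genus-defect, i.e.\ $\chi>0$, so that it is a sphere. Irreducibility makes it bound a ball $B$. The ball cannot contain the singular end, since it is compact in $M$. If $B\subset M\setminus E_t$, this contradicts the absence of compact components. If $B\supset$ part of $E_t$, connectedness of $E_t$ and its noncompactness toward $r\to0$ give a contradiction. Hence every component of $\Sigma_t$ has $\chi\le0$. A Mayer--Vietoris / homology argument using $H_2$ of $M\cong$ (ends) $T^2$ shows $\Sigma_t$ separates the two ends, so $\chi(\Sigma_t)\le0$ overall. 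This holds for a.e.\ $t$, where $\Sigma_t$ is regular enough.

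Monotonicity then follows from \eqref{Geroch-monotonicity}, since $R\ge0$, $\chi(\Sigma_s)\le0$, and the squared terms are nonnegative. For \eqref{estimate-Hawking}, we check that $\liminf_{t\to-\infty}m_H(\Sigma_t)\ge-\frac{1}{36\pi^{3/2}}|h_1|^{3/2}$. Near $r=0$, the barrier comparison forces $\Sigma_t$ to lie in thin annuli around slices $S(r)$, and the area identity $|\Sigma_t|=e^{t-s}|\Sigma_s|$ pins the area to $r^{4/3}(|h_1|+o(1))$. One then has to control $\int H^2$, which is the main obstacle. The plan is to use the lower semicontinuity of $\int H^2$ together with blow-up: rescaling the metric near $r\to0$ converges to the exact model, whose IMCF through slices is unique. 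This gives $\int_{\Sigma_t}H^2\to\frac{16}{9}r^{-2/3}|h_1|$ in the sense of the slice computation \eqref{eq:slice-computation} with $h_1$, because the $O(r)$ decay of $E_1$ is strong enough.

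An alternative route, if that fails, is to approximate: each $u_\delta$ starts exactly on $S(\delta)$, where $m_H$ is computed directly as $-\frac{|h_1|^{3/2}}{36\pi^{3/2}}+o(1)$. Monotonicity for $u_\delta$ and upper semicontinuity of $m_H$ under the convergence $u_\delta\to u$ (areas converge, and $\int H^2$ is lower semicontinuous) then transfer the bound to $u$.
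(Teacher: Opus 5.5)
\textbf{The gap is the $L^\infty$ step near the singular end.} Most of your architecture matches the paper: approximating flows from $S(\delta)$, local gradient bounds from the Huisken--Ilmanen estimate, compactness, the irreducibility argument for $\chi\le 0$, and your fallback route for the Hawking mass (monotonicity for $u_\delta$ plus lower semicontinuity of $\int H^2$, which is exactly the paper's argument). Your minimizing-hull replacement is also harmless, since jumping to the hull cannot decrease the $\chi$-free Hawking mass; the paper instead proves that small slices are outer-minimizing. The problem is the barrier step.

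A fixed-$\epsilon$ barrier $(\tfrac43+\epsilon)\log r+c$ lying above $u_\delta$ at $S(\delta)$ needs $c\ge-\epsilon\log\delta$, and similarly for the subsolution. So comparison from $S(\delta)$ only gives
\[
(\tfrac43-\epsilon)\log\tfrac{r}{\delta}\;\le\;u_\delta-\tfrac43\log\delta\;\le\;(\tfrac43+\epsilon)\log\tfrac{r}{\delta}.
\]
This window has width $2\epsilon\log(r/\delta)\to\infty$ as $\delta\to0$, so the constants are not independent of $\delta$. The gradient estimate does not compensate near $r=0$. With $|\nabla u_\delta|\lesssim 1/r$ and $\operatorname{diam}S(r)\sim r^{2/3}$, it only bounds the oscillation of $u_\delta$ on $S(r)$ by $Cr^{-1/3}$, which blows up. Renormalizing at a base point (the paper's $c_i=u_i(p_0)$) still gives a convergent subsequence. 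But nothing in your argument then forces the limit to satisfy $u\to-\infty$ as $r\to0$. That is part of the statement, and it is what makes $\{u<T\}$ an admissible initial set enclosing the singular end for every $T$.

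\textbf{The missing idea: use the full $O(r)$ decay of $E_1$.} Since $H(S(r))=\frac{4}{3r}+O(1)$, the functions $v_\pm(r)=\tfrac43\log(r\pm Cr^2)$ are a supersolution and a subsolution on $\{r<\epsilon_2\}$. Their gap $v_+-v_-=\tfrac43\log\frac{1+Cr}{1-Cr}$ is uniformly bounded, because the error is now the integrable quantity $\int O(r)\,\frac{dr}{r}$ rather than $\epsilon\log(r/\delta)$. Pinning $v_-(r)-v_-(\delta)\le u_\delta\le v_+(r)-v_+(\delta)$ (normalized by $u_\delta=0$ on $S(\delta)$, and glued to a fixed weak solution beyond $S(\epsilon_2)$) gives $\delta$-uniform oscillation bounds and
\[
u_\delta(r,\theta)-u_\delta(r_0,\theta)\le\tfrac43\log\frac{r(1+Cr)}{r_0(1-Cr_0)}.
\]
This bound survives the limit and yields $u\to-\infty$ at the singular end.

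With this replacement the rest of your proof goes through. The blow-up route you propose for controlling $\int H^2$ is unnecessary. It is also doubtful: the model is not conical, and rescaling it near a point produces a flat product rather than the model.
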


The proof is divided into three parts. First, we show that the slices
$S(r)=\{r=\mathrm{const}\}$ are outer minimizing for sufficiently small
$r$, which allows us to construct subsolutions and supersolutions going to infinity. Next, we apply Theorem~\ref{IMCF-existence} to obtain weak IMCFs starting
from $S(\epsilon_0/i)$ to obtain
a sequence $u_i$. The subsolutions and supersolutions provide uniform
local bounds for $u_i$, which allow us to pass to a global weak limit.
Finally, we prove \eqref{estimate-Hawking} using Geroch monotonicity.

\begin{proof}
\noindent\textit{(i)}
We can choose $\epsilon_0>0$ sufficiently small so that
$\{S(r)\}_{0<r<\epsilon_0}$ is a strictly mean-convex foliation of
$\{0<r<\epsilon_0\}$. Define
\begin{equation}
    i(\epsilon)
    :=
    \inf
    \bigg\{
        |\Sigma|:
        \Sigma \text{ separates the two ends of }M
        \text{ and }
        \Sigma\subset\{r\geq\epsilon\}
    \bigg\}
    >0.
\end{equation}
The positivity follows from the geometry of $(M,g)$ at infinity.
By standard geometric measure theory, the infimum is achieved by a
minimizing surface, denoted by $N(\epsilon)$, which is minimal away
from its contact set with $S(\epsilon)$.

Choose $\epsilon_1<\epsilon_0$ such that $|S(\epsilon_1)|<i(\epsilon_0)$.
Then, for every $\epsilon<\epsilon_1$,
\begin{equation}
    i(\epsilon)
    \leq |S(\epsilon)|
    <i(\epsilon_0),
\end{equation}
and hence $N(\epsilon)$ cannot be entirely contained in
$\{r\geq\epsilon_0\}$.

We next show that $N(\epsilon)$ is contained in
$\{\epsilon\leq r<\epsilon_0\}$ for all sufficiently small $\epsilon$.
Otherwise, $N(\epsilon)$ intersects $S(\epsilon_0)$ at some point $p$.
We may choose $\rho>0$, independent of $\epsilon$, such that
\begin{equation}
    B(p,\rho)\cap S(\epsilon)=\emptyset
\end{equation}
for every $\epsilon<\epsilon_1$, and such that the monotonicity formula
for minimal surfaces applies in $B(p,\rho)$. Since $N(\epsilon)$ is
minimal in this ball, there exists a constant
$C=C(M,g,\epsilon_0,\rho)>0$ such that
\begin{equation}
    |N(\epsilon)|
    \geq
    |N(\epsilon)\cap B(p,\rho)|
    \geq
    C.
\end{equation}

Choose $\epsilon_2<\epsilon_1$ sufficiently small so that $
    |S(\epsilon_2)|<C$. Then, for every $\epsilon<\epsilon_2$,
   $ N(\epsilon)
    \subset
    \{\epsilon\leq r<\epsilon_0\}$, since otherwise $C>|S(\epsilon)|\geq |N(\epsilon)|\geq C$, contradiction.

Let $p\in N(\epsilon)$ be a point where $r$ attains its maximum. If
$r(p)>\epsilon$, then $N(\epsilon)$ is tangent to $S(r(p))$ at $p$ and
lies on its inner side. Therefore
\begin{equation}
    H\bigl(N(\epsilon)\bigr)
    \geq
    H\bigl(S(r(p))\bigr)
    >0,
\end{equation}
This contradicts the fact that
$N(\epsilon)$ is minimal near $p$. Therefore $r(p)=\epsilon$, and hence
$N(\epsilon)=S(\epsilon)$.

\noindent\textit{ii)} By \eqref{g-asym},
\begin{equation}\label{IMCF-initial-H}
H(S(r))=\frac{4}{3r}+O(1).
\end{equation}
Therefore, there exists a constant $C>0$ such that
\begin{equation}
t-T=v_{\pm}(r)=\frac{4}{3}\log (r\pm Cr^2)
\end{equation}
give a supersolution and a subsolution, respectively, for sufficiently small $r$. Indeed, the speed of the flow given by $v_+$ is
\begin{equation}
\frac{\partial r}{\partial t}
=\frac{3}{4}\frac{r+Cr^2}{1+2Cr}
< \frac{1}{H}
=\frac{3}{4}\frac{r}{1+O(r)}.
\end{equation}
Thus $S(v_+^{-1}(t))$ moves more slowly than IMCF and hence $v_+$ is a supersolution. The argument for $v_-$ is similar.

Next, let $\delta_i=\frac{\epsilon_2}{i}$, and let $\bar{u}$ be the weak IMCF with initial surface $S(\epsilon_2)$. Using $v_-$ as comparison for large $r$, we have 
\begin{equation}\label{bar-u-infinity}
    \bar{u}\rightarrow \infty \text{ as }r\rightarrow \infty
\end{equation}
Define
\begin{equation}
u_{\pm,i}(p)=
\begin{cases}
v_{\pm}(r(p))-v_{\pm}(\delta_i), & \delta_i\leq r(p)\leq \epsilon_2,\\
\bar{u}(p)+v_{\pm}(\epsilon_2)-v_{\pm}(\delta_i), & r(p)\geq \epsilon_2.
\end{cases}
\end{equation}

Let $u_i$ be the weak IMCF with initial surface $S(\delta_i)$. Since $u_{-,i}$ is a weak subsolution in $\{\delta_i\leq r\leq \epsilon_2\}$ and both flows start from $S(\delta_i)$, the comparison principle gives
\begin{equation}\label{3-glueing-comparison}
    u_i\geq u_{-,i} \qquad \text{for } \delta_i\leq r\leq \epsilon_2.
\end{equation}
In the region outside $S(\epsilon_2)$, we apply the comparison principle again to $u_i$ and $u_{-,i}$, using \eqref{3-glueing-comparison} as the initial ordering on $S(\epsilon_2)$. This yields $u_i\geq u_{-,i}$ outside $S(\epsilon_2)$. The same argument applies to $u_{+,i}$, and thus
\begin{equation}
    u_{+,i}\geq u_i\geq u_{-,i}
    \qquad\text{on }\{r\geq\delta_i\}.
\end{equation}

Furthermore, for $p\in{\delta_i<r<\epsilon_2}$,
\begin{equation}\label{gloabl-solution-middle-1}
\begin{aligned}
u_{+,i}(p)-u_{-,i}(p)
&=
\frac{4}{3}\log\left(\frac{1+Cr}{1-Cr}\right)
-
\frac{4}{3}\log\left(\frac{1+C\delta_i}{1-C\delta_i}\right)\
&\leq
\frac{4}{3}\log\left(\frac{1+C\epsilon_2}{1-C\epsilon_2}\right).
\end{aligned}
\end{equation}
Since outside $S(\epsilon_2)$, both $u_{\pm}$ are continued with the same function $\bar{u}$, the same estimate continues to hold
\begin{equation}
u_{+,i}(p)-u_{-,i}(p)\leq C(\epsilon_2)
\end{equation}
for all $i$ and all $p$ where they are defined. Moreover,
\begin{equation}\label{gloabl-solution-middle-2}
|u_{+,i}(p)-u_{+,i}(q)|
=
\begin{cases}
|v_+(r(p))-v_+(r(q))|, & \delta_i<r(p),r(q)<\epsilon_2,\\
|v_+(\epsilon_2)-v_+(r(p))+\bar{u}(q)|, & r(p)<\epsilon_2,\quad r(q)>\epsilon_2,\\
|\bar{u}(p)-\bar{u}(q)|, & r(p),r(q)>\epsilon_2.
\end{cases}
\end{equation}
Using \eqref{gloabl-solution-middle-1} and \eqref{gloabl-solution-middle-2}, we obtain
\begin{equation}\label{gloabl-solution-middle-3}
\begin{aligned}
|u_i(p)-u_i(q)|
&\leq |u_{+,i}(p)-u_{-,i}(q)|
+|u_{+,i}(q)-u_{-,i}(p)|\\
&\leq 2C(\epsilon_2)+2|u_{+,i}(p)-u_{+,i}(q)|\\
&\leq 2C(\epsilon_2)+C(p,q).
\end{aligned}
\end{equation}

By \eqref{IMCF-gradient-estimate}, for every compact set
$K\Subset M$ there exists $s>0$ such that, for all sufficiently large
$i$, $B_s(p)\cap S(\delta_i)=\emptyset$
for every $p\in K$. Hence
\begin{equation}
|\nabla u_i|\leq C(K)
\end{equation}
uniformly on $K$. Together with \eqref{gloabl-solution-middle-3}, this
gives a uniform local oscillation bound for $u_i$. Fix $p_0\in M$ and
set $c_i=u_i(p_0)$,
then $u_i-c_i$ is locally uniformly bounded and equi-Lipschitz.
After passing to a subsequence, $u_i-c_i$ converges locally uniformly
to a function $u$. By the Compactness Theorem~2.1 of
\cite{Huisken2001TheIM}, $u$ is a weak solution of IMCF.

Fix a $r_0<\epsilon_2$,
\begin{equation}
    u_{i}(\delta_i,\vec{\theta})-u_i(r_0,\vec{\theta})\leq u_{+,i}(\delta_i,\vec{\theta})-u_{-,i}(r_0,\vec{\theta})=\frac{4}{3}\log (\frac{\delta_i-C\delta_i^2}{r_0-Cr_0^2})
\end{equation}
which goes to $-\infty$ as $i\rightarrow \infty$. This estimate also holds for $u$, and proves the first part of (\ref{global-existence-ends-estimate}). And the second part is proved similarly by using (\ref{bar-u-infinity}).

\noindent \textit{iii)}

 By (\ref{IMCF-initial-H}), 
\begin{equation}\label{hawking-initial-2}
    m_H(\Sigma_{i,0})=m_H(S(\delta_i))=-\frac{1}{36\pi^{3/2}}|h_1|^{3/2}+O(\delta_i)
\end{equation}

Finally, by \eqref{Geroch-monotonicity}\eqref{hawking-initial-2} and lower semicontinuity of $\int H^2$, in order to prove \eqref{estimate-Hawking}, it suffices to show that
\begin{equation}\label{chi-estimate}
\chi(\Sigma_t)\leq 0.
\end{equation}
Indeed, suppose that \(\Sigma_t\) has a spherical component \(\Gamma\). Since \(M\) is irreducible, \(\Gamma\) bounds an embedded \(3\)-ball \(B\subset M\). By Theorem~\ref{IMCF-existence}(3), \(E_t\) is connected. Moreover, by construction, \(E_t\) contains the inner end of \(M\), which is diffeomorphic to \((0,1)\times T^2\). Since \(B\) is compact, \(E_t\) cannot be contained in \(B\). Hence \(E_t\) lies on the exterior side of \(\Gamma\), so that \(B\) is a compact connected component of \(M\setminus E_t\). This contradicts Theorem~\ref{IMCF-existence}(3).

Therefore, \(\Sigma_t\) has no spherical component. Since each component of \(\Sigma_t\) is a closed orientable surface, it follows that $\chi(\Sigma_t)\leq 0$.

\end{proof}

We are now ready to prove the main theorem, Theorem~\ref{thm-main1}. The proof is divided into three parts. First, we derive an oscillation estimate for $\Sigma_t$ using the gradient estimate together with the sub- and supersolutions. Second, using this oscillation estimate, we obtain an estimate for $m_H(\Sigma_t)$ as $t\to\infty$. Finally, we prove the rigidity statement.

The rigidity argument follows the strategy used in the proof of the rigidity case of the Penrose inequality in \cite{Huisken2001TheIM}. The main difference is that we prove $\Sigma_{t}^{+} = \Sigma_{t}$ by exploiting the topology of $M$, whereas \cite{Huisken2001TheIM} assumes the absence of interior minimal surfaces. We include the details for completeness.

\begin{proof}
\noindent
In the proof, we fix the global weak IMCF given by \textbf{Proposition \ref{global-existence}}.

\noindent\textit{i) Oscillation of $\Sigma_t$}

Let
 \begin{equation}
 A(t)=\sup \{r(x):x\in u^{-1}(t)\}, \quad a(t)=\inf \{r(x):x\in u^{-1}(t)\}
 \end{equation}
denote the outer and inner radii of $\Sigma_t$, respectively. When $r$ is sufficiently large, $\log r$ is a subsolution. Thus, by \textbf{Lemma \ref{IMCF-comparison}}, we have
\begin{equation}\label{IMCF-comparison-1}
A(t+\tau)\leq \exp(\tau)A(t)
\end{equation}
for sufficiently large $t$ and arbitrary $\tau>0$.

Fix a sufficiently large $t_0$ and choose $p\in u^{-1}(t_0)$ such that $r(p)=a(t_0)$. By \eqref{estiamte-sigma}, we may take $s=\frac{1}{1000}r(p)$ in \eqref{IMCF-gradient-estimate}. Then $\Sigma_0\cap B(p,\frac{1}{1000}r(p))=\emptyset$ and $S(r(p))\subset B(p,\frac{1}{1000}r(p))$. Therefore,
\begin{equation}
|\nabla u|(r,\vec{\theta}) \leq \frac{C}{r} \qquad \text{on } S(r(p)).
\end{equation}
Furthermore, since $\operatorname{Diam}S(r(p))<2r(p)^{\frac{2}{3}}\operatorname{Diam}(h)$, it follows that
\begin{equation}\label{IMCF-os-estimate-1}
    \sup \{|u(q)-u(q')|:q,q'\in S(r(p))\}\leq \frac{C}{r(p)^{1/3}}
\end{equation}

Therefore, $u-t_0\geq -Cr(p)^{-1/3}$ on $S(r(p))$, and hence $u^{-1}(t_0-Cr(p)^{-1/3})$ lies inside $S(r(p))$. Combining this with \eqref{IMCF-comparison-1}, we obtain
\begin{equation}\label{IMCF-os-estimate-2}
A(t_0)\leq \exp\left(\frac{C}{r(p)^{1/3}}\right)A\left(t_0-\frac{C}{r(p)^{1/3}}\right) \leq \exp\left(\frac{C}{r(p)^{1/3}}\right)a(t_0).
\end{equation}

\noindent\textit{(ii) Estimeta $\int H^2$}

For a small fixed $\epsilon$, choose $t_1$ sufficiently large so that $u_{-,\epsilon}$ and $u_{+,\epsilon}$ are respectively a subsolution and a supersolution in the region $\{t\geq t_1\}$. Recall that $|\nabla u|=H$ almost everywhere on $\Sigma_t$ for almost every $t$. By the coarea formula,
\begin{equation}
   \begin{aligned}
\int_{t_1}^{t_2}\left(\int_{\Sigma_t} H^2 d A\right) d t&=\int_{u^{-1}\left(t_1, t_2\right)} |\nabla u|^3 d v_g\\
& \geq \int_{A(t_1)\leq r\leq a(t_2)}|\nabla u|^3 d v_g.
   \end{aligned}
\end{equation}
If $|\nabla u|=0$ in some region, corresponding to a jump of the weak IMCF, this region contributes nothing to the right-hand side, so the inequality remains valid.

Next, choose $t_1$ sufficiently large and let
\begin{equation}\label{choice-t2}
   t_2=\inf \{u(p):p\in S(a(t_1)+a(t_1)^{3/4})\}.
\end{equation}
Thus $t_2$ is the first time at which the weak IMCF reaches $S(a(t_1)+a(t_1)^{3/4})$. By increasing $t_1$ if necessary, we may assume that $dv_g\geq (1-\epsilon)a(t_1)^{4/3}drdh_2$. Therefore

\begin{equation}\label{IMCF-estimate-middle}
   \begin{aligned}
\int_{u^{-1}\left(t_1, t_2\right)} |\nabla u|^3 d v_g
& \geq \int_{A(t_1)\leq r\leq a(t_2)}|\nabla u|^3 d v_g\\
&\geq (1-\epsilon)a(t_1)^{4/3}\int_{T^2}\int_{A(t_1)}^{a(t_2)} |\nabla u(r,\vec{\theta})|^3drdh_2\\
&\geq (1-\epsilon)\frac{a(t_1)^{4/3}}{(a(t_2)-A(t_1))^2}\int_{T^2}\bigg( \int_{A(t_1)}^{a(t_2)}|\nabla u(r,\vec{\theta})|dr \bigg)^3dh_2\\
&\geq (1-2\epsilon)\frac{a(t_1)^{4/3}}{(a(t_2)-A(t_1))^2}\int_{T^2}\bigg(u(a(t_2),\vec{\theta})-u(A(t_1),\vec{\theta})\bigg)^3dh_2.
   \end{aligned}
\end{equation}
Here we used H\"older's inequality and $|\partial_r u|\leq (1+o(1))|\nabla u|$. It remains to estimate the integrand on the right-hand side.

By the definition of $a(t_2)$, there exists $p\in S(a(t_2))$ such that $u(p)=t_2$. Applying \eqref{IMCF-os-estimate-1}, we obtain
\begin{equation}
    |u(a(t_2),\vec{\theta})-t_2|\leq \frac{C}{a(t_2)^{1/3}}\leq \frac{C}{a(t_1)^{1/3}}\quad \forall \vec{\theta}\in T^2.\label{estimate-delta-u-1}
\end{equation}
Similarly,
\begin{equation}\label{estimate-delta-u-2}
    |u(A(t_1),\vec{\theta})-t_1|\leq\frac{C}{a(t_1)^{1/3}} \quad \forall \vec{\theta}\in T^2.
\end{equation}

We compare $u_{-,\epsilon}(\cdot)-u_{-,\epsilon}(a(t_1))$ with $u-t_1+Ca(t_1)^{-1/3}$. By \eqref{estimate-delta-u-2}, on $S(a(t_1))$ we have
\begin{equation}
    u_{-,\epsilon}(\cdot)-u_{-,\epsilon}(a(t_1))=0,\quad u-t_1+Ca(t_1)^{-1/3}\geq 0.
\end{equation}
Applying \textbf{Lemma \ref{IMCF-comparison}}, we obtain on $S(a(t_1)+a(t_1)^{3/4})$
\begin{equation}
    \begin{aligned}
    u-t_1+Ca(t_1)^{-1/3}&\geq u_{-,\epsilon}(a(t_1)+a(t_1)^{3/4})-u_{-,\epsilon}(a(t_1))\\
    &=\left(\frac{4}{3}-\epsilon\right)\log \left(1+\frac{1}{a(t_1)^{1/4}}\right).
    \end{aligned}
\end{equation}
Therefore, by the definition of $t_2$ and by taking $t_1$ sufficiently large,
\begin{equation}\label{estimate-delta-u-3}
    t_2-t_1\geq \left(\frac{4}{3}-2\epsilon\right)\frac{1}{a(t_1)^{1/4}}.
\end{equation}

By using $u_{+,2\epsilon}$ instead, we see that
\begin{equation}\label{estimate-delta-u-4}
    t_2-t_1 \leq (\frac{4}{3}+2\epsilon)\frac{1}{a(t_1)^{1/4}}
\end{equation}
From \eqref{estimate-delta-u-1}, \eqref{estimate-delta-u-2}, and \eqref{estimate-delta-u-3}, we obtain, for sufficiently large $t_1$,
\begin{equation}
   \begin{aligned}\label{estimate-delta-u}
u(a(t_2),\vec{\theta})-u(A(t_1),\vec{\theta})&\geq \left(\frac{4}{3}-\epsilon\right)\log \left(1+\frac{1}{a(t_1)^{1/4}}\right)-\frac{C}{a(t_1)^{1/3}}\\
& \geq \left(\frac{4}{3}-2\epsilon\right)\frac{1}{a(t_1)^{1/4}} \quad\forall \vec{\theta}.
   \end{aligned}
\end{equation}

Moreover,
\begin{equation}
\label{estimate-delta-r}
       a(t_2)-A(t_1)\leq \left(a(t_1)+a(t_1)^{3/4}\right)-a(t_1)=a(t_1)^{3/4}.
\end{equation}

Substituting \eqref{estimate-delta-u} and \eqref{estimate-delta-r} into \eqref{IMCF-estimate-middle}, we obtain
\begin{equation}\label{IMCF-estimate-middle-4}
   \int_{t_1}^{t_2}\left(\int_{\Sigma_t} H^2 d A\right) d t\geq (1-100\epsilon)\frac{64}{27}\frac{1}{a(t_1)^{11/12}}|h_2|.
\end{equation}

Using the monotonicity of the Hawking mass, we have
\begin{equation}
\begin{aligned}\label{IMCF-estimate-middle-3}
\int_{t_1}^{t_2} \int_{\Sigma_t} H^2dA_tdt &=\int_{t_1}^{t_2}\frac{1}{|\Sigma_t|^{1/2}}\bigg(|\Sigma_t|^{1/2} \int_{\Sigma_t} H^2dA_t\bigg)dt\\
& \leq -(16\pi)^{3/2}m_H(\Sigma_{t'}) \int_{t_1}^{t_2} \frac{1}{|\Sigma_t|^{1/2}} d t.
\end{aligned}
\end{equation}
where $t'$ is any number less than $t_1$.

For almost every $t\in[t_1,t_2]$, $\Sigma_t$ is a $C^{1,\alpha}$ hypersurface separating the two ends of $M$. Hence it represents the generator of $H_2((C,\infty)\times T^2,\mathbb Z)\cong\mathbb Z$ for some large $C$,
and the projection $\Pi:\Sigma_t\longrightarrow T^2$ has degree one. By Sard's theorem and the area formula, almost every point in $T^2$ is a regular point. Pick a regular point $p$, the determinant is 
\begin{equation}
    dA_{\Sigma_t}\geq (1-o(1))r^{4/3}\sqrt{1+r^{-4/3}|\nabla_{h_2} f|^2}dA_{h_2}
\end{equation}
And therefore 
\begin{equation}
|\Sigma_t|
\geq
(1+o(1))\,a(t_1)^{4/3}|h_2|,
\qquad
t\in[t_1,t_2],
\end{equation}

Put it into (\ref{IMCF-estimate-middle-3}) and use (\ref{estimate-delta-u-4}), we see that
\begin{equation}
    \int_{t_1}^{t_2} \int_{\Sigma t} H^2dA_tdt\leq -\frac{4}{3}(16\pi)^{3/2}(1+10\epsilon+o(1))\frac{1}{|h_2|^{1/2}}\frac{1}{a_{t_1}^{11/12}}m_H(\Sigma_{t'})
\end{equation}
Combine with (\ref{IMCF-estimate-middle-4}) and let $t_1\rightarrow \infty$, and note that $\epsilon$ can be made arbitraraly small, we see that
\begin{equation}\label{proof-main-middle-1}
    \frac{1}{36\pi^{3/2}}|h_2|^{3/2}\leq -m_H(\Sigma_{t'})\leq \frac{1}{36\pi^{3/2}}|h_1|^{3/2} \quad \forall t'
\end{equation}
    and the last inequality follows from (\ref{estimate-Hawking}).

\noindent\textit{(iii) Rigidity}

If $|h_1|=|h_2|$, from (\ref{proof-main-middle-1}), $m_{H}(\Sigma_t)$ is constant for all $t$. By \textit{(1)} of \textbf{Theorem \ref{IMCF-existence}}, $H > 0$ a.e. on $\Sigma_{t}$ for a.e. $t$. Combined with the monotonicity (\ref{Geroch-monotonicity}), we have
\begin{equation}\label{main-rigidity-middle-1}
\int_{\Sigma_{t}}|DH|^{2}=0
\end{equation} 
for a.e. $t$. Therefore by \textit{(6)} of \textbf{Theorem \ref{IMCF-existence}} and lower semicontinuity, (\ref{main-rigidity-middle-1}) holds for all $t$. Therefore $\Sigma_{t}$ has constant mean curvature on each connected component. By (\ref{IMCF-gradient-estimate}) and Regularity Theorem in Section 1 of \cite{Huisken2001TheIM}, $\Sigma_t$ has locally uniform $C^{1}$ estimates, it follows by elliptic theory that $\Sigma_{t}$ is smooth for each $t$, with estimates that are locally uniform for $t$. Similar considerations apply to $\Sigma_{t}^{+}$.

If there is a jump at time $t$, then $\Sigma_{t}^{+} \neq \Sigma_{t}$, and $H=0$ on a portion of $\Sigma_{t}^{+}$, so the connected component containing this portion, denoted by $\Sigma_{t,1}^+$, is a minimal surface. Furthermore, \(E_t^+\) is a strictly minimizing hull by \textit{(6)} of \textbf{Theorem \ref{IMCF-existence}}. Note the argument for (\ref{chi-estimate}) actually proves that each connected component of $\Sigma_s$ has $\chi\leq 0$ for each $s$, again by \textit{(6)} and local $C^{1,\alpha}$ estimate in \cite{Huisken2001TheIM} we see that $\chi(\Sigma_{t,1}^+)\leq 0$. Since
$\Sigma_{t,1}^+$ is outer minimizing, it is stable. The stability inequality, together with $R\geq0$ and $\chi(\Sigma_{t,1}^+)\leq0$, implies $\Sigma_{t,1}^+$ is a flat totally geodesic torus. By applying the arguemtent for the rigidity
of area-minimizing tori in manifolds of nonnegative scalar curvature
\cite{CaiGalloway2000} to the region outside $\Sigma_{t,1}^+$, we have the one side local splitting of the metric as a flat product near
$\Sigma_{t,1}^+$. Consequently, there exist nearby tori on the exterior side of $\Sigma_{t,1}^+$ having the same area as $\Sigma_{t,1}^+$, contradicting the
fact that $E_t^+$ is a strictly minimizing hull. Therefore $\Sigma_{t}^{+} = \Sigma_{t}$ for all $t$. (This is the only place where the rigidity proof differ from that in \cite{Huisken2001TheIM}. There they use there's no minimal surface as an assumption.)

This shows $H>0$ for $t>0$ and a convergence argument shows that $H(t)$ is locally uniformly positive for $t>0$. Since no jump happens, by Lemma 2.4 in \cite{Huisken2001TheIM}, for each $t>0$ there is some maximal $T>t$ such that $(\Sigma_{s})_{t \le s < T}$ is a smooth evolution. By the regularity derived above, $\Sigma_{s}$ has uniform space and time derivatives as $s \nearrow T$, so the evolution can be continued smoothly past $t=T$. This shows $T=\infty$ and the entire flow is smooth.

By (\ref{Geroch-monotonicity}), we have
\begin{equation*}
R=0, \quad II=\frac{H}{2}g
\end{equation*}
on each $\Sigma_{t}$ where $II$ is the second fundamental form. Using Jacobi equation for normal flow $\frac{d}{dr}H=-|II|^2-\operatorname{Ric}(\nu,\nu)$ where $\nu$ is outer normal vector, we see $\operatorname{Ric}(\nu,\nu)$ is constant on $\Sigma_t$. And thus by Gauss equation we see that $R(\Sigma_t)$ is constant for each $t$, thus $\Sigma_t$ is flat torus.

Write the metric in the form
\begin{equation}
    g=dr^2+f(r)^{4/3}h
\end{equation}
where $h$ is a fixed flat metric on $T^2$. The mean curvature and Hawking mass of $S(r)$ are computed by
\begin{equation}
    \begin{aligned}
        H(S(r))&=\frac{4}{3}\frac{\dot{f}}{f}\\
        m_H(S(r))&=-\frac{1}{64\pi^{3/2}}|S(r)|^{1/2}\int_{S(r)}H(S(r))^2=-\frac{1}{36\pi^{3/2}}|h|^{3/2}\dot{f}^2
    \end{aligned}
\end{equation}
$m_H(S(r))$ is constant, therefore $f(r)=kr+b$, then a shifting in $r$ and a scaling in $h$ yields $g$ takes form (\ref{model}).
\end{proof}

\begin{remark}
In \eqref{choice-t2}, we make this particular choice of $t_2$ because we want the weak IMCF to evolve from $S(r_0)$ to $S(r_0+r_0^\alpha)$, where we choose $\alpha=3/4$ so that $2/3<\alpha<1$. By \eqref{IMCF-os-estimate-1}, the oscillation of $u$ is bounded by $Cr^{-1/3}$. On the other hand, the time needed for the flow to cross a radial distance of order $r^\alpha$ is of order $r^{\alpha-1}$. Thus we require $\alpha>2/3$ so that $r^{\alpha-1}$ dominates the error term $r^{-1/3}$, as used in \eqref{estimate-delta-u}. We also require $\alpha<1$ in the estimate \eqref{IMCF-estimate-middle-3}, which ensures that
\begin{equation}
\frac{|S(r_1)|}{|S(r_2)|}=1+o(1)
\end{equation}
as $r_0\rightarrow\infty$ whenever $r_0\leq r_1,r_2\leq r_0+r_0^\alpha$.
\end{remark}

\begin{remark}
In \cite{Huisken2001TheIM}, Section~7 is devoted to the asymptotic behavior of weak IMCF in the asymptotically flat setting. However, the blow-down argument used there does not directly apply to our setting. Consider an asymptotically flat $3$-manifold $(\mathbb{R}^3,g')$ together with a weak IMCF $u'$, and define
\begin{equation}
g'_{\lambda}(x)=\lambda^2 g'\left(\frac{x}{\lambda}\right),\quad
u'_{\lambda}(x)=u'\left(\frac{x}{\lambda}\right).
\end{equation}
Then $u'_{\lambda}$ is a weak IMCF with respect to $g'_{\lambda}$. As $\lambda\rightarrow0$, there exist a sequence $\lambda_i\rightarrow0$ and constants $c_i$ such that
\begin{equation}
u'_{\lambda_i}-c_i\longrightarrow 2\log|x|
\end{equation}
locally uniformly on $\mathbb{R}^3\setminus\{0\}$. This blow-down argument works because Euclidean space is a metric cone and is invariant under scaling up to diffeomorphism. In contrast, our model \eqref{model} is not asymptotically conical. The natural scaling in our setting is 
\begin{equation}
g_\lambda(r,\vec{\theta})
=
\lambda^{4/3}g\left(\frac{r}{\lambda},\vec{\theta}\right)
=
d\left(\lambda^{-1/3}r\right)^2+r^{4/3}h.
\end{equation}
Fix $p=(1,\theta_0)$ as a base point and set $s=\lambda^{-1/3}(r-1)$. Then
\begin{equation}
g_\lambda
=
ds^2+\left(1+\lambda^{1/3}s\right)^{4/3}h.
\end{equation}
Hence, as $\lambda\to0$, the pointed manifolds $(M,g_\lambda,p)$ converge locally to the product $(\mathbb{R}\times T^2,ds^2+h)$. Thus, unlike the asymptotically flat case, the blow-down limit does not retain a nontrivial radial cone structure on which one could expect an analogous nontrivial limiting IMCF.
\end{remark}

\section{A Brown-York Type Inequality}

In this part, we first extends (\ref{thm-main1-ineq}) to manifolds which are Lipschitz across a hypersurface, and then prove the Brown-York type inequality (\ref{thm-ineq-brownyork-1}). Finally, we give some examples showing that why the method is limited to case where $(\Sigma,g|_\Sigma)$ is a flat torus.

\subsection{Extension to Metrics with Corners}

\begin{theorem}\label{thm-main1-Lip}
Let $(M^3,g)$ be a three-dimensional, irreducible Riemannian
manifold satisfying \textbf{Condition C}. Suppose that a
smooth, closed, embedded hypersurface $\Sigma$ in the
interior of $M$ separates $M$ into a bounded region
$\Omega$ and an exterior region $E$.
Assume that $g$ is Lipschitz across $\Sigma$ and that its
restrictions to the two regions are smooth up to $\Sigma$.
Let $H_+$ and $H_-$ denote the mean curvatures of $\Sigma$
with respect to the outward unit normals of $\Omega$
and $E$, respectively.
Suppose that $R_g\geq 0$ on both sides of $\Sigma$ and that
\begin{equation}\label{3-H-lip}
    H_++H_->0 \qquad \text{on }\Sigma.
\end{equation}
Then
\begin{equation}\label{thm-main1-ineq-lip}
    |h_1|\geq |h_2|.
\end{equation}
\end{theorem}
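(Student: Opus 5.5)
The plan is to reduce Theorem~\ref{thm-main1-Lip} to Theorem~\ref{thm-main1} by the Miao smoothing procedure, as in Shi--Tam and Miao's positive mass theorem with corners. The asymptotic conclusion $|h_1|\geq|h_2|$ only involves the two ends, so it suffices to produce smooth metrics $\tilde g_\delta$ on $M$ with the following properties: each satisfies \textbf{Condition C} with the same flat metrics $h_1,h_2$, each has $R\geq 0$, and each agrees with $g$ outside a compact neighbourhood of $\Sigma$. Applying Theorem~\ref{thm-main1} to any one of them would then finish the proof. Irreducibility is a topological assumption and is unaffected by changing the metric.

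\textbf{Step 1 (mollify across $\Sigma$).} Use Gaussian normal coordinates $(s,x)$ near $\Sigma$ and write $g=ds^2+g_s$, with $g_s$ Lipschitz in $s$. Mollify $g_s$ in $s$ on a $\delta$-neighbourhood, following Miao, to get smooth metrics $g_\delta$. These converge to $g$ uniformly and are unchanged outside $\{|s|<\delta\}$. Miao's computation shows that $R_{g_\delta}$ is bounded below by a constant independent of $\delta$ in the region $\{\delta^2<|s|<\delta\}$. On the inner region $\{|s|\leq\delta^2\}$ one has $R_{g_\delta}\geq -C$ plus a term of size $\delta^{-2}(H_++H_-)$ times a nonnegative bump. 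Hence, because $H_++H_->0$, the negative part $R_{g_\delta}^-$ is bounded independently of $\delta$ and is supported in a set of volume $O(\delta)$.

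\textbf{Step 2 (conformal correction).} Solve
\[
-8\Delta_{g_\delta}u_\delta+R_{g_\delta}^- \,\eta\, u_\delta=0
\]
for a positive $u_\delta$ with $u_\delta\to1$ at both ends, where $\eta$ is a cutoff. Then $\tilde g_\delta=u_\delta^4g_\delta$ has
\[
R_{\tilde g_\delta}=u_\delta^{-5}\bigl(R_{g_\delta}u_\delta+R_{g_\delta}^-\eta\, u_\delta\bigr)\geq 0.
\]
This is the main obstacle. The ends here are not asymptotically flat: one is the singular end with $r\to0$, the other has cross sections growing like $r^{4/3}$. So existence of $u_\delta$ and the estimate $\|u_\delta-1\|_{C^0}\to0$ require a Green's function or barrier argument adapted to the warped ends. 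I would first try to exploit that the potential is $O(\delta)$ in $L^{3/2}$ with compact support, and to construct barriers in $r$ from radial ODE solutions of $\Delta u=0$ on $dr^2+r^{4/3}h$, namely $u=a+b\,r^{-1/3}$, which tends to a constant as $r\to\infty$.

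\textbf{Step 3 (asymptotics of $u_\delta$).} The conclusion must survive the conformal change. Near $r=0$, the bounded harmonic functions allowed are $u=a+b\,r^{-1/3}$ with $b=0$, so $u_\delta=c_1+O(r)$-type corrections. This preserves the first line of \eqref{g-asym0} up to replacing $h_1$ by $c_1^{4}h_1$ and rescaling $r$. At infinity one gets $u_\delta=c_2+O(r^{-1/3})$, and similarly $h_2$ becomes $c_2^4h_2$ after rescaling $r$. Since $c_1,c_2\to1$ as $\delta\to0$, Theorem~\ref{thm-main1} gives $c_1^{4}|h_1|\geq c_2^{4}|h_2|$ up to the rescaling factors, with every factor tending to $1$. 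Letting $\delta\to0$ yields \eqref{thm-main1-ineq-lip}.

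Alternatively, one could avoid Steps 2 and 3 by running the IMCF proof directly on the Lipschitz metric. That approach needs the Geroch monotonicity \eqref{Geroch-monotonicity} to pick up a nonnegative jump term when $\Sigma_t$ crosses $\Sigma$, and it is precisely this contribution that $H_++H_->0$ would guarantee. I would keep this route as a fallback if the barriers at the singular end fail.
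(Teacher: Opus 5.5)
There is a genuine gap in Step~3, at the singular end $r\to0$. You analyse $u_\delta$ there using only the radial ODE, and that is not enough.

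Near $r=0$ the Laplacian is $\partial_r^2+\frac{4}{3r}\partial_r+r^{-4/3}\Delta_{h_1}$ plus lower-order terms. The tangential part scales like $r^{-4/3}$, which is weaker than $r^{-2}$. So every eigenmode $f(r)\phi(\theta)$ with $-\Delta_{h_1}\phi=\lambda\phi$ satisfies $(r^{4/3}f')'=\lambda f$ in the model. This has the same indicial roots $0$ and $-1/3$ as the radial mode, and its bounded branch is $f=1+\tfrac32\lambda r^{2/3}+\cdots$, which does not vanish at $r=0$.

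Hence a bounded $g$-harmonic function near the $2/3$-horn converges to a function $\psi_\delta(\theta)$ on $T^2$, with corrections only of order $r^{2/3}$. For your $u_\delta$, the non-radial coefficients are driven by the potential supported near $\Sigma$. That potential has no reason to be $T^2$-invariant, since $\Sigma$ is an arbitrary separating surface, so $\psi_\delta$ is generically non-constant.

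You also cannot impose $u_\delta\to1$, or any constant, at $r=0$. That end has zero capacity: the capacity of $\{r\le\rho\}$ inside $\{r\le1\}$ is $\frac{|h_1|}{3(\rho^{-1/3}-1)}\to0$. So the limit of a bounded solution is determined by the global problem, not prescribed. Admitting the $r^{-1/3}$ branch instead turns the horn into a finite cylinder.

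Consequently, in the $\tilde g_\delta$-distance coordinate $\tilde r\approx\psi_\delta^2r$, the rescaled cross sections $\tilde r^{-4/3}\tilde g_\delta|_{\{\tilde r=s\}}$ converge to $\psi_\delta^{4/3}h_1$. This metric is not flat unless $\psi_\delta$ is constant, and the error decays only like $r^{2/3}$ rather than $O(r)$. So $\tilde g_\delta$ does not satisfy \textbf{Condition C}, and Theorem~\ref{thm-main1} cannot be invoked.

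To salvage your route you would have to re-prove Theorem~\ref{thm-main1} (in particular Proposition~\ref{global-existence}) for singular ends with non-flat limiting cross sections and weaker decay. You would also need to show that the inner Hawking mass $-\frac{1}{36\pi^{3/2}}\bigl(\int_{T^2}\psi_\delta^{4/3}\,dA_{h_1}\bigr)^{3/2}$ tends to $-\frac{1}{36\pi^{3/2}}|h_1|^{3/2}$ uniformly as $\delta\to0$. None of this is in your plan.

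Existence of $u_\delta$, which you flagged as the main obstacle, is actually the lesser issue. A local Sobolev inequality combined with non-parabolicity of the end of volume growth $r^{7/3}$ suffices, even though the global Euclidean Sobolev inequality fails there.

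Step~2 also has a sign slip. With $R^-\geq0$ the negative part, $-8\Delta u+R^-\eta u=0$ gives $R_{\tilde g}=u^{-4}(R-R^-\eta)$. You need $-8\Delta u-R^-\eta u=0$, which is the indefinite sign and the reason smallness of $R^-$ matters.

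The paper avoids all of this by never touching the ends. On each side of $\Sigma$ it makes a conformal change $\phi^4g$ with $\phi=1-\varphi(\epsilon-s)$ and $\varphi(t)=e^{-1/t}$, supported in an $\epsilon$-collar. Since $\varphi''/\varphi'=t^{-2}-2t^{-1}\to\infty$ as $t\to0^+$, this makes $R>0$ strictly in the collar. At the same time it lowers $H_\pm$ only by $O(\varphi'(\epsilon))$, so $H_++H_->0$ survives. Theorem~5 of \cite{Brendle2010DeformationsOT} then smooths the corner at the cost of an arbitrarily small loss of scalar curvature, which the strict positivity absorbs. The resulting smooth metric agrees with $g$ outside a compact neighbourhood of $\Sigma$, so \textbf{Condition C} holds with the same $h_1,h_2$ and Theorem~\ref{thm-main1} applies directly.
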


\begin{proof}
Consider a small collar neighborhood $U$
of $\Sigma$ in $\overline{\Omega}$ such that
\begin{equation}\label{3-U}
    U\cong[0,\delta)\times\Sigma,
    \qquad g|_{\Omega}=ds^2+h(s),
\end{equation}
where $h(s)$ is a smooth
family of Riemannian metrics on $\Sigma$. $H(s)$
be the mean curvature of the coordinate hypersurface
$\{s\}\times\Sigma$ with respect to $\partial_s$.
After shrinking $\delta$ if necessary, there exists
a constant $C>0$ such that $|H(s)|\leq C \qquad \text{in }U.$

Let $0<\epsilon<\delta$ be sufficiently small.
We perform the conformal change
$g_1=\phi^4g|_{\Omega}$ with
\begin{equation}
    \phi=
    \begin{cases}
        1-\varphi(\epsilon-s), & 0\leq s<\epsilon,\\
        1, & \text{elsewhere},
    \end{cases}
\end{equation}
where $\varphi(t)=\exp(-1/t)$, for $t>0$. Since $\varphi$ can be smoothly extended on $\mathbb{R}$, the
$\phi_+$ is smooth and positive.

In the region $0\leq s<\epsilon$, the scalar curvature
$R_1$ of $g_1$ satisfies
     \begin{equation}
        \begin{aligned} \label{3-sc-conformal}
R_1 &= \frac{1}{\phi^5} 
\left( R \phi - 8 \Delta_g \phi \right)\\
&=\frac{8}{\phi^5}\bigg(-\partial_{ss}\phi-H(s)\partial_s\phi+\frac{1}{8}R\phi \bigg)\\
&\geq \frac{8}{\phi^5}\big( \varphi''(\epsilon-s)-C\dot{\varphi}(\epsilon-s)  \big)\\
        \end{aligned}
    \end{equation}
Since $\varphi'(t)>0$ for $t>0$ and
\[
    \frac{\varphi''(t)}{\varphi'(t)}
    =\frac{1}{t^2}-\frac{2}{t}
    \longrightarrow+\infty
    \qquad\text{as }t\to0^+,
\]
we may choose $\epsilon$ sufficiently small so that
\begin{equation}\label{3-R1}
    R_1>0 \quad\text{for }0\leq s<\epsilon,
    \qquad
    R_1\geq0 \quad\text{elsewhere}.
\end{equation}
Moreover, the mean curvature $H_1$ of $\Sigma$ with
respect to $g_1$ and the outward unit normal satisfies
  \begin{equation}
           H_1 =\frac{1}{\phi^2}(H_++\frac{4}{\phi}\partial_{\nu}\phi)=\frac{1}{(1-\varphi(\epsilon))^2}\bigg(H_+ -\frac{4}{1-\varphi(\epsilon)}\dot{\varphi}(\epsilon)\bigg)
    \end{equation}
where $\nu=-\partial_s$ is the outward unit normal
with respect to the original metric.

Similarly, we can consider $g_2=\phi^4g|_{M\setminus \Omega}$ and apply the same computation so that
 \begin{equation}
    \begin{aligned}\label{3-R2}
    R_2>0 &\text{ in a small neighborhood of }\Sigma; \quad R_2\geq 0 \text{ elsewhere },\\
     & H_2=\frac{1}{(1-\varphi(\epsilon))^2}\bigg(H_- -\frac{4}{1-\varphi(\epsilon)}\dot{\varphi}(\epsilon)\bigg)
    \end{aligned}
 \end{equation}

From (\ref{3-H-lip}), we might further shrink $\epsilon$ so that
\begin{equation}\label{3-H-lip-2}
    H_1+H_2=\frac{1}{(1-\varphi(\epsilon))^2}\bigg(H_++H_{-}-\frac{8}{1-\varphi(\epsilon)}\dot{\varphi}(\epsilon)\bigg)>0
\end{equation}

From (\ref{3-R1})(\ref{3-R2})(\ref{3-H-lip-2}), we can apply Theorem 5 of \cite{Brendle2010DeformationsOT} to $g_1,g_2$ along $\Sigma$ and approximate to get a smooth metric $\tilde{g}$ so that $R(\tilde{g})\geq 0$ and $\tilde{g}$ agrees with $g_1,g_2$ outside a small neighborhood of $\Sigma$ in $\Omega$ and $M\setminus \Omega$ respectively, and thus agrees with $g|_{\Omega},g|_{M\setminus \Omega}$ near $\{0\}\times T^2$ or $\{\infty\}\times T^2$ respectively. Apply \textbf{Theorem \ref{thm-main1}}, and the theorem follows.

\end{proof}

\subsection{Proof of the Brown–York Type Inequality}\label{subsec-BY}\leavevmode\newline

In this subsection we apply Shi-Tam type argument in \cite{Shi2002PositiveMT} to prove \textbf{Theorem \ref{thm-brownyork}}.

Let $(M^3, \bar{g},V)$ be a static triple. We consider a regular foliation or a smooth one-parameter family of embedded hypersurfaces $\{\Sigma_t\}_{t \in [T_1, T_2)}$ evolving by
\begin{equation}\label{eq:hypersurface-flow}
    \frac{\partial X}{\partial t}(x, t) = f(x, t) \nu(x, t), \quad x \in \Sigma_t.
\end{equation}

For each leaf $\Sigma_t$, we denote by:
\begin{itemize}
    \item $h_t = g|_{\Sigma_t}$ the induced Riemannian metric on $\Sigma_t$;
    \item $\bar{II}$ the second fundamental form of $\Sigma_t \subset (M,g)$ w.r.t. to $f^{-1}\partial_t$;
    \item $\sigma_i$ the $i$-th elementary symmetric function of the principal curvatures $(\kappa_1,\kappa_2)$ of $\Sigma_t$ in $(M,g)$ for $i=1,2$
\end{itemize}

The background metric locally splits into
\begin{equation}\label{metric-ambient}
    \bar{g} = f^2 dt^2 + h_t.
\end{equation}

And we define a new metric
\begin{equation}\label{metric-eta}
    g_u = u^2 dt^2 + h_t,
\end{equation}
where $u$ is a positive function, and require that the scalar curvature is preserved, namely:
\begin{equation}\label{prescribed-scalar}
    R(g_u) = \bar{R}.
\end{equation}
From (4.3) of \cite{Lu2017MinimalHA}, if we further assume $\bar{R}=0$, then (\ref{prescribed-scalar}) transfers equation of $u$ as
\begin{equation}\label{Shi-prescribed-scalar}
    \bar{H}(t)\frac{\partial u}{\partial t}=\frac{u^2}{f}\Delta_t u-\frac{R(\Sigma_t)}{2f}u^3+\frac{u}{f}\big(f^2|\bar{II}|^2+f^2\sigma_2+\frac{\partial}{\partial_t}(f\bar{H})\big)
\end{equation}
where $\bar{H}(t)$ is the mean curvature of $\Sigma_t$ in $\bar{g}$, $\Delta_t$ is the Laplacian of $(\Sigma_t,h_t)$ and $R(\Sigma_t)$ is the scalar curvature of $(\Sigma_t,h_t)$.

In the proof of Brown-York type inequalities, in particular in \cite{Lu2017MinimalHA}, a monotonicity formula for weighted total mean curvature is used. More precisely, Proposition 2.2 in \cite{Lu2017MinimalHA} states that

\begin{lemma}\label{lem-brownyork-monotonicity}
Let $(M,\bar{g},V)$ be a static triple, and let $\{\Sigma_t\}$ be a smooth family of hypersurfaces evolving according to~\eqref{eq:hypersurface-flow}. Let $\bar{g}, g_u$ as above such that $R(g_u) = \bar{R}$. Then the weighted total mean curvature satisfies
\begin{equation}\label{eq-monotonicity-formula}
    \frac{d}{dt} \left( \int_{\Sigma_t} V(\bar{H}(t) - H_u(t)) \, dh_t\right) = - \int_{\Sigma_t} \frac{(u - f)^2}{u} \bar{H}(t) \frac{\partial V}{\partial \nu} \, dh_t - \int_{\Sigma_t} V \sigma_2 \frac{(u - f)^2}{u} \, dh_t,
\end{equation}
where $dh_t$ is the area form on $(\Sigma_t, h_t)$, $\bar{H}(t)$ and $H_u(t) = \frac{f}{u}\bar{H}(t)$ denote the mean curvature of $\Sigma_t$ for $g$ and $g_u$ respectively w.r.t $\partial_t$.
\end{lemma}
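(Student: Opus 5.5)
We prove Lemma~\ref{lem-brownyork-monotonicity} by differentiating each of the two terms $\int_{\Sigma_t}V\bar H\,dh_t$ and $\int_{\Sigma_t}VH_u\,dh_t$ separately in $t$. We then use the scalar-curvature equation~\eqref{Shi-prescribed-scalar} to remove $\partial_t u$, and the static equation~\eqref{static-eqn} to remove second derivatives of $V$. Throughout, $\Sigma_t$ moves with normal speed $f$ in $\bar g$ and with normal speed $u$ in $g_u$; the induced metrics agree, and the second fundamental forms are related by $II_u=\frac{f}{u}\bar{II}$.

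\emph{Step 1: the $g_u$ term.} Write $VH_u=\frac{Vf}{u}\bar H$ and note $\partial_t dh_t=f\bar H\,dh_t$. Then
\[
\frac{d}{dt}\int_{\Sigma_t}VH_u\,dh_t=\int_{\Sigma_t}\Big(\partial_tV\,\frac{f\bar H}{u}+V\,\partial_t\big(\tfrac{f\bar H}{u}\big)+V\frac{f^2\bar H^2}{u}\Big)dh_t .
\]
Here $\partial_tV=f\,\partial_\nu V$. Expanding $\partial_t(f\bar H/u)=\frac{\partial_t(f\bar H)}{u}-\frac{f\bar H}{u^2}\partial_tu$, we substitute $\bar H\,\partial_tu$ from~\eqref{Shi-prescribed-scalar}. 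This produces terms $-\frac{V}{u}\Delta_tu$, $\frac{VR(\Sigma_t)}{2}u$, and $-\frac{V}{u}\big(f^2|\bar{II}|^2+f^2\sigma_2+\partial_t(f\bar H)\big)$. The $\partial_t(f\bar H)$ pieces cancel against the first term of the expansion.

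\emph{Step 2: the $\bar g$ term.} By the standard first variation,
\[
\partial_t\bar H=-\Delta_tf-f(|\bar{II}|^2+\overline{\operatorname{Ric}}(\nu,\nu)),
\]
and therefore
\[
\frac{d}{dt}\int V\bar H=\int\big(f\partial_\nu V\,\bar H-V\Delta_tf-Vf(|\bar{II}|^2+\overline{\operatorname{Ric}}(\nu,\nu))+Vf\bar H^2\big).
\]
The terms $-V\Delta_t f$ and $-\frac{V}{u}\Delta_t u$ are integrated by parts twice onto $V$, giving $-f\Delta_tV$. We then use the static equation restricted to $\Sigma_t$. Since $\bar R=0$, tracing~\eqref{static-eqn} gives $\Delta V=0$, and hence
\[
\Delta_tV=-\bar H\partial_\nu V-\nabla^2V(\nu,\nu)=-\bar H\partial_\nu V-V\overline{\operatorname{Ric}}(\nu,\nu).
\]
The Gauss equation with $\bar R=0$ gives $R(\Sigma_t)=-2\overline{\operatorname{Ric}}(\nu,\nu)+2\sigma_2$, which lets us trade $\overline{\operatorname{Ric}}(\nu,\nu)$ for $R(\Sigma_t)$ and $\sigma_2$. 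Using $\bar H^2=|\bar{II}|^2+2\sigma_2$, every integrand should then collapse to combinations of the form $f-2f+\frac{f^2}{u}$ times $\bar H\partial_\nu V$ or $V\sigma_2$. The identity $u-2f+\frac{f^2}{u}=\frac{(u-f)^2}{u}$ then yields~\eqref{eq-monotonicity-formula}.

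\emph{Main obstacle.} The delicate part is the integration by parts of $\frac{V}{u}\Delta_tu$. Integrating by parts does not directly give $u\Delta_tV$; instead it produces a term $V\frac{|\nabla u|^2}{u^2}$ together with a $\nabla V\cdot\nabla u/u$ term. This must be handled precisely as in \cite{Lu2017MinimalHA}: the $R(\Sigma_t)u$ term and the static equation should combine so that all tangential-gradient contributions cancel. My first attempt will be to write everything in terms of $\Delta_t V$ and check the signs against the Schwarzschild case. If a residual gradient term survives, I will instead rely on Proposition~2.2 of \cite{Lu2017MinimalHA}, which the paper cites directly; its derivation uses only the static equation, $\bar R=0$, and~\eqref{Shi-prescribed-scalar}, all of which hold in the present setting.
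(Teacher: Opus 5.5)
The paper does not carry out this computation: it quotes Proposition~2.2 of \cite{Lu2017MinimalHA}. So your fallback is literally the paper's argument, and your direct derivation is a self-contained alternative. That derivation does close, but Step~1 contains an arithmetic slip that creates your ``main obstacle.''

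When you substitute \eqref{Shi-prescribed-scalar} into $-\frac{Vf\bar H}{u^2}\partial_t u$, the Laplacian term is $\frac{f}{u^2}\cdot\frac{u^2}{f}\Delta_t u=\Delta_t u$. It therefore contributes $-V\Delta_t u$, not $-\frac{V}{u}\Delta_t u$. Your other two contributions, $\frac{R(\Sigma_t)}{2}uV$ and $-\frac{V}{u}\big(f^2|\bar{II}|^2+f^2\sigma_2+\partial_t(f\bar H)\big)$, are right. Thus
\[
\partial_t\Big(\frac{f\bar H}{u}\Big)=-\Delta_t u+\frac{R(\Sigma_t)}{2}\,u-\frac{f^2}{u}\big(|\bar{II}|^2+\sigma_2\big),
\]
and $\int_{\Sigma_t}V\Delta_t u\,dh_t=\int_{\Sigma_t}u\,\Delta_t V\,dh_t$, with no gradient terms at all. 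The rescue you sketched would not have worked anyway. Had the coefficient really been $V/u$, the term $V|\nabla u|^2/u^2$ could not be absorbed by $R(\Sigma_t)u$, which is zeroth order in $u$, nor by the static equation, which does not involve $u$.

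With this correction, your plan goes through. Use $\Delta_t V=-\bar H\partial_\nu V-V\overline{\operatorname{Ric}}(\nu,\nu)$, $R(\Sigma_t)=2\sigma_2-2\overline{\operatorname{Ric}}(\nu,\nu)$ and $\bar H^2=|\bar{II}|^2+2\sigma_2$ to get
\[
\frac{d}{dt}\int_{\Sigma_t}V\bar H\,dh_t=\int_{\Sigma_t}2f\,\big(\bar H\partial_\nu V+V\sigma_2\big)\,dh_t,
\qquad
\frac{d}{dt}\int_{\Sigma_t}VH_u\,dh_t=\int_{\Sigma_t}\Big(u+\frac{f^2}{u}\Big)\big(\bar H\partial_\nu V+V\sigma_2\big)\,dh_t.
\]
Subtracting gives the coefficient $-(u-2f+f^2/u)=-(u-f)^2/u$, which is \eqref{eq-monotonicity-formula}. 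Your ``$f-2f+\frac{f^2}{u}$'' should read $u-2f+\frac{f^2}{u}$.

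You restrict to $\bar R=0$, which is the setting in which \eqref{Shi-prescribed-scalar} is written. For a general static triple the same computation still works. The identity $\Delta_tV=-\bar H\partial_\nu V-V\overline{\operatorname{Ric}}(\nu,\nu)$ holds for any constant $\bar R$, and the $\bar R$ in the Gauss equation cancels the one in the equation for $u$. Compared with the bare citation, your route gives a short, checkable derivation showing exactly where staticity enters.
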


Next we prove \textbf{Theorem \ref{thm-brownyork}}
\begin{proof}
    In the region $\{r\geq r_0\}$ in $\bar{M}$, consider 

    \begin{equation}
        \begin{aligned}\label{3-metric-deformed}
        \bar{g}&=dr^2+r^{4/3}\bar{h},\quad \text{for } r\geq r_0\\
        g_u&=u^2dr^2+r^{4/3}\bar{h},\quad \text{for }r\geq r_0
        \end{aligned}
    \end{equation}

     We want to find $u$ so that $R(g_u)=0$ and $H(g_u)=H-\epsilon$ on $\Sigma$ where $\epsilon>0$ is an arbitraraly small. Then (\ref{Shi-prescribed-scalar}) becomes
     \begin{equation}\label{3-brownyork-evolution-equ}
        \begin{cases}
               \frac{4r^{1/3}}{3}\frac{\partial u}{\partial r}&=u^2\Delta_{\bar{h}} u \quad \text{for }r>r_0\\
               u&=\frac{4/(3r_0)}{H-\epsilon}\quad \text{at }r=r_0
        \end{cases}
     \end{equation}
     where $\Delta_{\bar{h}}$ is the Laplacian of $(T^2,\bar{h})$.

Define a new time variable $\tau(r)=\frac{9}{8}r^{2/3}$, then $\frac{\partial u}{\partial r} = \frac{\partial u}{\partial \tau} \frac{d\tau}{dr} =\frac{3}{4}r^{-1/3} u_\tau$, and the first line of (\ref{3-brownyork-evolution-equ}) becomes

\begin{equation}\label{3-brownyork-evolution-equ-1}
   u_\tau = u^2 \Delta_{\bar{h}} u 
\end{equation}

By the parabolic maximum principle, there exists a $C>0$ so that
\begin{equation}\label{3-estimate-L-infinity}
\frac{1}{C} \le u(\tau, \vec{\theta}) \le C
\end{equation}

As a result, the operator $u^2 \Delta$ is uniformly elliptic. Standard quasi-linear parabolic theory (see \cite{ladyzhenskaia1968linear}, Th. IV.10.1, or \cite{friedman1964partial} Th.1, Chap. 4 for example) guarantees that the solution exists globally and becomes smooth for $\tau > \frac{9}{8}r_0^{2/3}$. Let $E(\tau) = \int_{T^2} |\nabla u|^2 \, d\bar{h}$ be the energy functional, and take derivative w.r.t to $\tau$,
\begin{equation}
    \begin{aligned}
    \frac{dE}{d\tau} &= 2 \int_{T^2} \nabla_{\bar{h}} u \cdot \nabla_{\bar{h}} u_\tau \, d\bar{h} \\
    &= -2 \int_{T^2} \Delta_{\bar{h}} u \, (u^2 \Delta_{\bar{h}} u) \, d\bar{h} \\
    &= -2 \int_{T^2} u^2 (\Delta_{\bar{h}} u)^2 \, d\bar{h}\leq \frac{-2}{C^2}\int_{T^2}(\Delta_{\bar{h}} u)^2d\bar{h}
    \end{aligned}
\end{equation}
where in the last inequality we used (\ref{3-estimate-L-infinity}).

Let $\lambda_1 > 0$ be the first non-zero eigenvalue of $-\Delta_{\bar{h}}$ on $T^2$. By the Poincar\'{e} inequality for functions with mean zero, we know $\int (\Delta u)^2 \, d\bar{h} \ge \lambda_1 \int |\nabla u|^2 \, d\bar{h}$. Thus:
\begin{equation}
\frac{dE}{d\tau} \le -\frac{2}{C^2} \lambda_1 E(\tau)
\end{equation}

It follows that $E(\tau)$ decays exponentially.
Uniform parabolic regularity then
yield exponential decay of the spatial derivatives
of $u$ in every fixed $C^k$ norm.
By \eqref{3-brownyork-evolution-equ-1},
$u_\tau$ also decays exponentially, so $u$ converges
uniformly to a limit as $\tau\to\infty$, denoted by $u_\infty$.
Consequently, there exists $\alpha>0$ such that

\begin{equation}
    \begin{aligned}\label{3-u-asmp}
        |u(r, x)-u_\infty|+ |\nabla_{\bar{h}} u(r, x)|+|\nabla_{\bar{h}}^2 u(r, x)| = O\left(\exp\left(-\alpha r^{2/3}\right)\right)\\
              |u_r(r, x)| = O\left(r^{-1/3} \exp\left(-\alpha r^{2/3}\right)\right)
    \end{aligned}
\end{equation}

Moreover
\begin{equation}\label{3-constancy}
    \begin{aligned}
       \frac{d}{d\tau} \int_{T^2} \frac{1}{u(\tau, \vec{\theta})} \, d\bar{h} &=\int_{T^2} \frac{d}{d\tau}\frac{1}{u(\tau, \vec{\theta})} \, d\bar{h}\\
         &=\int_{T^2} -\Delta_{\bar{h}} u \, d\bar{h} =0
    \end{aligned}
\end{equation}
thus $\int_{T^2} \frac{1}{u(\tau, \vec{\theta})} \, d\bar{h}$ is constant. Using the initial condition (\ref{3-brownyork-evolution-equ}), the limit $u_{\infty}$ is

\begin{equation}\label{3-u-infty}
   u_{\infty} =\lim_{\tau\rightarrow \infty} u(\tau,\vec{\theta})=\frac{\frac{4}{3r_0}|\bar{h}|}{\int_\Sigma (H-\epsilon) d\bar{h}}
\end{equation}

Consequently, the metric $g_u$ has asymptotic behavior as
\begin{equation}\label{3-gu-asmp}
    g_u\sim u_{\infty}^2dr^2+r^{4/3}\bar{h}=ds^2+s^{4/3}(\frac{1}{u_{\infty}^{4/3}}\bar{h})
\end{equation}
and it satisfies decaying requirement at infinity in (\ref{g-asym0})(\ref{g-asym}) due to the exponential decay. Besides, on $\Sigma$ we have
\begin{equation}
    -H_u+H=\epsilon>0
\end{equation}
satifying (\ref{3-H-lip}). The minus sign arises because $H_u$ is computed with respect to the unit normal $u^{-1}\partial_r$,
which points into the exterior region. Applying Theorem~\ref{thm-main1-Lip}, we can glue $(\Omega,g)$ and $(\{r\geq r_0\}\subset \bar{M},g_u)$ along the common boundary to get $(M',g')$. Since $M'$ is obtained from $\Omega$ by attaching the product end $\{r\geq r_0\}\subset\bar M$, every embedded sphere in $M'$ can be moved into
the interior of $\Omega$ by an ambient isotopy.
Thus, the irreducibility of $\Omega$ implies that
$M'$ is irreducible. Besides since $g'$ agrees with $g,g_u$ away from $\Sigma$, $g'$ satisfies \textbf{Condition C}. Apply \textbf{Theorem \ref{thm-main1-Lip}}, we arrive at 
\begin{equation}\label{3-mass-ineq}
    \frac{1}{u_{\infty}^{4/3}}|\bar{h}|\leq |h|
\end{equation}

Next apply (\ref{eq-monotonicity-formula}), and in our case it reduces to
\begin{equation}
    \frac{d}{dt} \left( \int_{\Sigma_t} V(\bar{H}(t) - H_u(t)) \, dh_t\right) = 0
\end{equation}
Combined with (\ref{3-gu-asmp}), we see that 
\begin{equation}
    \begin{aligned}
    \int_{\Sigma}V\big(\bar{H}-H_u\big)&= \lim_{t\rightarrow \infty} \int_{\Sigma_t}V\big(\bar{H}(t)-H_u(t)\big)dh_t\\
    &=\frac{4}{3}\big(1-\frac{1}{u_{\infty}}\big)|\bar{h}|
    \end{aligned}
\end{equation}
Using (\ref{3-mass-ineq}) and the boundary condition
$H_u=H-\epsilon$ 
\begin{equation}\label{3-brownyork-general-1}
    \int_{\Sigma}V\bigg(\bar{H}-(H-\epsilon)\bigg)\geq \frac{4}{3}\bigg(|\bar{h}|-|\bar{h}|^{1/4}|h|^{3/4}\bigg)
\end{equation}
Note that $V=\frac{1}{r_0^{1/3}}$ on $\Sigma$ and $\int_{\Sigma_t}V\bar{H}(t)dh_t=\frac{4}{3}|\bar{h}|$ for all $t$, letting $\epsilon\rightarrow 0$ yields
\begin{equation}
        \int_{\Sigma} H\,dA_g
    \leq
    \frac{4}{3}r_0^{1/3}|\bar h|^{1/4}|h|^{3/4}.
\end{equation}
Finally, since $(\Sigma,g|_{\Sigma})$ is isometric to the $\{r=r_0\}$ of $\bar{g}=dr^2+r^{4/3}\bar{h}$, we see that $A((\Sigma,g|_{\Sigma}))=r_0^{4/3}|\bar{h}|$, and (\ref{thm-ineq-brownyork-1}) follows.

\end{proof}

\begin{remark}
The inequality \eqref{thm-ineq-brownyork-1}
can also be proved directly without using
Lemma~\ref{lem-brownyork-monotonicity}.
The flatness of the slices $\Sigma_t$ leads to
\eqref{3-u-infty}; combining this identity with
\eqref{3-mass-ineq} and letting $\epsilon\to0$
yields the desired inequality.
We present the argument using the
monotonicity formula to prepare for
the discussion in the next subsection of why
this method fails for general convex surfaces
$\Sigma$.
\end{remark}

\subsection{Limitations of the Method for General Tori}\label{subsection-Failure-in-General}\leavevmode\newline

Suppose we have a positive mass type theorem on a static background $(M,\bar{g},V)$ for some kind of mass $m$. To derive a Brown-York type theorem from it, one usually needs following

\begin{itemize}
    \item[\hypertarget{3-fail-1}{*}] A monotonicity formula for weighted total mean curvature

    \item[\hypertarget{3-fail-2}{**}] A regular foliation of the background static manifold $\Sigma_t$ and $\bar{g}=f^2dt^2+h_t$ so that
    \begin{equation}
    \lim_{t\rightarrow \infty}\int_{\Sigma_t}V\bar{H}(t)=m(\bar{g})
    \end{equation}
    and each $\Sigma_t$ is convex in certain sense.

    \item[\hypertarget{3-fail-3}{***}] A global solution $u$ to (\ref{Shi-prescribed-scalar}) which converges at $\infty$ and 
    \begin{equation}
    \lim_{t\rightarrow \infty}\int_{\Sigma_t}VH(t)=F(m(g_u))
    \end{equation}
    for some function $F$
\end{itemize}

And notations are as in Theorem~\ref{thm-brownyork}.
If these conditions were satisfied—which, as we will demonstrate, is not the case—letting $\epsilon\rightarrow 0$ in (\ref{3-brownyork-general-1}) yields
    \begin{equation}\label{3-brownyork-general}
    \int_{\Sigma}V\big(\bar{H}(t)-H(t)\big)\geq \frac{4}{3}\bigg(|\bar{h}|-|\bar{h}|^{1/4}|h|^{3/4}\bigg)
\end{equation}
From the previous result, one might hope this can be derived from \textbf{Theorem \ref{thm-main1}}. However, \hyperlink{3-fail-1}{*}, \hyperlink{3-fail-2}{**} and \hyperlink{3-fail-3}{***} all fail in general.

In the following, fix a background metric $(M^3,\bar{g})$ as (\ref{model}), and consider a smoothly embedded torus $\Sigma$. Using the same notations as in the previous Section~\ref{subsec-BY}. Further assume that $\bar{II}(\Sigma)>0$ for $\Sigma$.

\subsubsection{Limitation of Monotonicity}\leavevmode\newline

Let $p$ be a inner most point of $\Sigma$, i.e.
\begin{equation}\label{3-fail-innermostpoint}
    r(p)=\inf \{r(q):q\in \Sigma\}
\end{equation}
By the comparison of mean curvature with the slice $\{r=r(p)\}$, we see $\bar{H}\leq \frac{4}{3r(p)}$. And thus at $p$
\begin{equation}
    \sigma_2(\Sigma)\leq \frac{\bar{H}^2}{4}\leq \frac{\bar{H}}{3r(p)}
\end{equation}
Furthermore, at $p$ we have $\nu=\partial_r$, and thus $\partial_\nu V=-\frac{1}{3r^{4/3}}$. Therefore
\begin{equation}
    -\bar{H}\frac{\partial V}{\partial\nu}-V\sigma_2\geq 0
\end{equation}
Choose $\Sigma$ so that strict inequality holds in a small neighborhood $U$, which can be done for a generic choice of $\Sigma$. We might further choose $H$ so that $\bar{H}-H$ is supported in $U$, and thus $u-f$ is also supported in $U$. Then (\ref{eq-monotonicity-formula}) becomes
\begin{equation}
    \frac{d}{dt} \left( \int_{\Sigma} V(\bar{H}(t) - H_u(t)) \, dh_t\right) =  \int_{U} \frac{(u - f)^2}{u}\bigg( -\bar{H}(t) \frac{\partial V}{\partial \nu}-V\sigma_2 \bigg) > 0
\end{equation}
at $t=0$. So the monotonicity fails.

\subsubsection{Limitation of The Foliation}\leavevmode\newline

In \cite{Shi2002PositiveMT}, the authors use the distance function, \(f=1\) in (\ref{metric-ambient}), to construct a foliation in Euclidean space. On the other hand, in \cite{Lu2017MinimalHA}, an inverse curvature flow is considered in the Schwarzschild manifold by choosing $f=\frac{\sigma_1}{\sigma_2}.$

Both ambient spaces can be viewed as warped product manifolds of the form $ds^2+\phi(s)^2\widetilde{h}$, for a suitable metric \(\widetilde{h}\). In this setting, the Ricci curvature in the radial direction is given by
\begin{equation}\label{eq:radial-ricci}
\operatorname{Ric}(\partial_s,\partial_s)
=-(n-1)\frac{\phi''}{\phi}\leq 0.
\end{equation}

This curvature condition plays an important role in deriving a priori estimates for the support function in \cite{Lu2016InverseCF,Lu2017MinimalHA}, and consequently in establishing the long-time existence of the corresponding inverse curvature flows. More specifically, in the parabolic equation (3.23) of \cite{Lu2017MinimalHA}, the assumption \(\phi''\geq0\) provides the appropriate sign needed to apply the maximum principle. Geometrically, positive curvature tends to focus
flow trajectories and may contribute to singularity
formation. Negative curvature, by contrast, tends
to spread them apart, suggesting that the flow may
remain regular and the evolving hypersurfaces may
gradually approach the coordinate slices.

Furtheremore, we provide with an example that the normal flow (i.e. $f=1$) cannot retain smooth for all time. Again consider a inner most point $p=(r_0,\vec{\theta}_0)$ as (\ref{3-fail-innermostpoint}). Since the normal vector for $\Sigma$ at $p$ is $\partial_r$, the trajectory from $p$ is $\gamma(t)=(r_0+t,\vec{\theta}_0)$. Using the evolution equation for mean curvature and \textbf{Proposition \ref{prop-curvature-computation}}, we see that along $\gamma(t)$
\begin{equation}
    \partial_t H\leq -\frac{H^2}{2}-\frac{4}{9(r_0+t)^2}
\end{equation}
Set $s=r_0+t$, and consider $\partial_s u= -\frac{u^2}{2}-\frac{4}{9s^2}$. Its solutions are
\begin{equation}
    u=\frac{2}{3s}\frac{1+2Cs^{1/3}}{1+Cs^{1/3}}
\end{equation}
and as $C$ goes to $0$ or $\infty$ we have two more solutions $u=\frac{2}{3s}$, $u=\frac{4}{3s}$.
Choose $\Sigma$ so that its principal curvatures
at $p$ are positive and
$0<H(p)<\frac{2}{3r_0}$.
The comparison solution satisfying $u(r_0)=H(p)$
then has
$-1/2r_0^{-1/3}<C<0$.
As $s$ increases from $r_0$, this solution
decreases, crosses zero, and tends to $-\infty$
as $s$ tends to $(-1/C)^3$.

Such initial data can be realized by a smooth
embedded torus with sufficiently small positive
principal curvatures at its innermost point.

Although we have not constructed an example
in which the inverse curvature flow
($f=\sigma_1/\sigma_2$) fails in this model,
the normal-flow example suggests that
\eqref{eq:radial-ricci} is more than a technical assumption, but has geometric significance.

\subsubsection{Limitation of Parabolic System}\leavevmode\newline
Let \(f=1\) in \eqref{Shi-prescribed-scalar}. Using the Gauss equation and the Jacobi equation,
\begin{equation}
\sigma_2
=
\frac{R(\Sigma_t)-\bar{R}}{2}
+\overline{\operatorname{Ric}}(\nu,\nu),
\qquad
\partial_t \bar{H}
=
-|\overline{II}|^2
-\overline{\operatorname{Ric}}(\nu,\nu),
\end{equation}
we arrive at
\begin{equation}
\bar{H}\partial_t u
=
u^2\Delta_{\Sigma_t}u
+
\frac{R(\Sigma_t)}{2}(u-u^3).
\end{equation}

To illustrate a potential obstruction, we consider
a simplified model obtained by freezing the geometric
coefficients in the evolution equation, that \(R(\Sigma_t)\) and the Laplacian \(\Delta_{\Sigma_t}\) are independent of \(t\), and that \(\bar H\) is independent of \(x\), so that \(\bar H=\bar H(t)>0\). We also apply a reparametrization for $t$ to eliminate the coefficient \(\bar H\). We therefore consider the simplified equation
\begin{equation}\label{fail-3-middle}
\partial_t u
=
u^2\Delta_\Sigma u
+
\frac{R(\Sigma)}{2}(u-u^3).
\end{equation}
Although this model need not arise from an actual
geometric flow, it exhibits a finite-time blow-up
mechanism that may obstruct the extension of the
argument to non-flat initial tori.

Let \(\lambda_1\) be the first eigenvalue of the operator $L
=
-\Delta_\Sigma+\frac{R(\Sigma)}{2}$,
and let \(\phi>0\) be a corresponding first eigenfunction:
\begin{equation}
-\Delta_\Sigma\phi
+
\frac{R(\Sigma)}{2}\phi
=
\lambda_1\phi.
\end{equation}

 Using the constant function \(1\) as a test function and the Gauss--Bonnet formula, we see that $\lambda_1\leq 0$ with equality if and only if \(\Sigma\) is flat. Therefore we assume $\lambda_1<0$.
Set
\begin{equation}
\underline u(x,t)
=
A(t)\phi(x).
\end{equation}

Let
\begin{equation}
a
:=
\min_\Sigma\phi
>
0,
\qquad
b
:=
\min_\Sigma\frac{R(\Sigma)}{2}.
\end{equation}
Since \(\lambda_1<0\), we obtain
\begin{equation}
\begin{aligned}
\underline u^2\Delta_\Sigma\underline u
+
\frac{R(\Sigma)}{2}
(\underline u-\underline u^3)&=\frac{R(\Sigma)}{2}A\phi
+
A^3\phi^2
\left(
\Delta_\Sigma\phi
-
\frac{R(\Sigma)}{2}\phi
\right)\\
&= \frac{R(\Sigma)}{2}A\phi
-
\lambda_1A^3\phi^3\\
&\geq
\phi
\left(
bA-\lambda_1a^2A^3
\right).
\end{aligned}
\end{equation}

Therefore, if \(A\) solves
\begin{equation}
A'
=
bA-\lambda_1a^2A^3,
\end{equation}
then \(\underline u=A\phi\) is a subsolution of \eqref{fail-3-middle}. Since $-\lambda_1a^2>0$,
for sufficiently large \(A(0)\), the solution of the ODE blows up in finite time.

Therefore, for any non-flat metric on \(T^2\), there exist positive initial data for which the solution of \eqref{fail-3-middle} blows up in finite time.

\section{Negative Mass and Singularities}\label{mass-negative}
In this section, we restrict our attention to three-dimensional manifolds and discuss two possible phenomena associated with negative mass: the formation of a horn singularity of type $2/3$, and a change in the topology of the interior.

\subsection{Zero Area Singularity and Mass}

Let $(M_m,\bar g_m)$ be the spatial Schwarzschild manifold of mass $m$. When $m>0$, the Schwarzschild metric extends across its minimal hypersurface by reflection, and the resulting complete manifold has two asymptotically flat ends. The situation is rather different when $m<0$. In this case, the Schwarzschild metric develops an
$r^{2/3}$-horn singularity at its inner end (see
\cite{BrayJauregui2013,ShiTam2018,DaiSunWang2025Spin}), in the terminology that a horn of type $b$ is modeled on

\begin{equation}
    dr^2+r^{2b}h.
\end{equation}

Interestingly, negative mass appears to require a sufficiently strong degeneration of the geometry near the singular set. This viewpoint is supported by several extensions of the positive mass theorem to singular spaces. In \cite{DaiSunWang2025,DaiSunWang2025Spin}, it is shown that singularities of $b$-horn type with $b\geq 1$ are still compatible with the positive mass theorem under some mild hypotheses.

A related framework for studying singularities
and their contribution to mass is the theory of
\emph{zero area singularities} (ZAS), initiated
by Bray and developed by Bray--Jauregui
\cite{BrayJauregui2013}.

\begin{definition}\label{def-ZAS} Let
$(M,g)$ be a $3$-manifold whose metric is
smooth in the interior but may degenerate along a compact boundary component
$\Sigma\subset\partial M$. The component $\Sigma$ is called a
\textbf{zero area singularity (ZAS)} if, for every sequence of smooth surfaces
$\Sigma_i$ converging to $\Sigma$ in $C^1$,
\[
    |\Sigma_i|_g\longrightarrow 0.
\]

Furthermore, A ZAS $\Sigma$ is called
regular if, in a neighborhood $U$ of $\Sigma$, one can write
\begin{equation}
    g=\varphi^4\bar g,
\end{equation}
where $\bar g,\varphi$ is smooth up to $\Sigma$ and
\begin{equation}
    \varphi|_\Sigma=0,
    \qquad \varphi>0 \text{ on }U\setminus \Sigma,\qquad 
    \bar\nu(\varphi)>0.
\end{equation}
Here $\bar\nu$ denotes the $\bar g$-unit normal pointing into $M$.
\end{definition}
Thus, although $\Sigma$ is topologically a surface, metrically it collapses
to zero area. The singular boundary of the negative-mass Schwarzschild
manifold is the basic example. Then we can define mass for ZAS.

\begin{definition}

The regular mass for a regular ZAS in $(M,g)$ is defined by
\begin{equation}\label{eq:regular-ZAS-mass}
    m_{\mathrm{reg}}(\Sigma)
    =
    -\frac14
    \left(
        \frac1\pi
        \int_{\Sigma}
        \bar\nu(\varphi)^{4/3}\,dA_{\bar g}
    \right)^{3/2}.
\end{equation}
Suppose $\Sigma$ is ZAS (not necessarily regular ZAS). We can find a sequence $\Sigma_i$ that converges to $\Sigma$ in $C^{1}$, and let $\Omega_i$ be the region enclosed by $\Sigma_i$. If $(M,g)$ is AF, let $\varphi_i$ defined in $M\setminus \Omega_i$ so that
\begin{equation}
    \Delta \varphi_i=0, \qquad \varphi_i|_{\Sigma_i}=0;\qquad \varphi_i=1\text{ at }\infty 
\end{equation}
Then $\Sigma_i$ is a regular ZAS for $(M\setminus \Omega_i,g_i=\varphi_i^4g)$. The ZAS mass of $\Sigma$ is defined by
\begin{equation}
    m_{\mathrm{ZAS}}(\Sigma)
    :=
    \sup_{\{\Sigma_n\}}
    \left(
        \limsup_{n\to\infty}
        m_{\mathrm{reg}}(\Sigma_n)
    \right),
\end{equation}
where the supremum is taken over all sequences $\{\Sigma_n\}$ converging
to $\Sigma$ in $C^1$, and $m_{\mathrm{reg}}(\Sigma_n)$ denotes the regular
mass of $\Sigma_n$.
\end{definition}

The following conjecture is proposed in \cite{BrayJauregui2013}.
\begin{conjecture}\label{conj-ZAS-mass}
    Let $(M^3,g)$ is AF with ZAS $\Sigma$, and $R\geq 0$ on $M\setminus \Sigma$. Then 
    \begin{equation}
        m_{ADM}(g)\geq m_{ZAS}(\Sigma).
    \end{equation}

\end{conjecture}

Building on Bray's earlier work, Bray and Jauregui
proved this inequality assuming an auxiliary
conjecture in conformal geometry.
Under the additional assumption that the ZAS
admits a global harmonic resolution, they also
established that equality holds only for the
negative-mass Schwarzschild metric; see
\cite{BrayJauregui2013}.
When $\Sigma$ is connected, Robbins proved
the mass inequality using weak inverse mean
curvature flow, without assuming the auxiliary
conformal conjecture; see \cite{Robbins2010}.

Consider the flat product $(\mathbb R\times T^2,dt^2+h)$. Zhu proved that every complete metric with nonnegative scalar curvature on $\mathbb R\times T^2$ is isometric to a flat product; see \cite{ZhuRigidity}. Its coordinate tori are totally geodesic and thus have zero mean curvature. In our model, by contrast, the coordinate tori have strict positive mean curvature near infinity, while the scalar curvature remains nonnegative. The $2/3$-horn singularity at $r=0$ can therefore be viewed as the price paid for this positive mean curvature at infinity. More precisely, using the conformal representation \eqref{model2}, the singularity ${r=0}$ is a regular ZAS with
\begin{equation}\label{eq:model-ZAS-mass}
m_{\mathrm{reg}}(\{r=0\})
=
-\frac14
\left(
\frac{1}{\pi}3^{-4/3}|h|
\right)^{3/2}
=
-\frac{1}{36}
\left(\frac{|h|}{\pi}\right)^{3/2}.
\end{equation}
The lower-order error terms in the asymptotic expansion of $g$ in \textbf{Theorem \ref{thm-main1}} do not affect the limiting mass. If the singularity at $\{r=0\}$ is a regular ZAS, then, by Definition~\ref{def-mass-F}, the main theorem can equivalently be written as
\begin{equation}
m_F(g) \geq m_{\mathrm{reg}}(\{r=0\}).
\end{equation} And it's formally analogous to the ZAS mass inequality in \textbf{Conjecture \ref{conj-ZAS-mass}}.

Besides, this phenomenon can also be extended to Brown-York type inequalities. Let $(\Omega^3,g)$ be compact and connected,
with $\partial\Omega=\Sigma_O\sqcup\Sigma_S$, where
$\Sigma_S$ consists of zero area singularities and $g$ is
smooth elsewhere, with $R_g\geq0$.
If $\Sigma_O$ is connected, has positive outward mean
curvature $H$, and admits a strictly convex isometric
embedding into $\mathbb R^3$, by assuming Conjecture~\ref{conj-ZAS-mass}, we have
\begin{equation}\label{ineq-BY-ZAS}
    \frac{1}{8\pi}
    \int_{\Sigma_O}(\bar H-H)\,d\sigma
    \geq m_{\mathrm{ZAS}}(\Sigma_S),
\end{equation}
where $\bar H$ is the outward mean curvature of the
Euclidean embedding. Indeed, the Shi--Tam extension construction
\cite{Shi2002PositiveMT} bounds the boundary integral
from below by the ADM mass of the resulting
asymptotically flat extension. Applying
Conjecture~\ref{conj-ZAS-mass} in place of the positive
mass theorem then gives \eqref{ineq-BY-ZAS}.

\subsection{Asymptotic Locally Hyperbolic Case}\leavevmode\newline

For ALH manifolds with toroidal conformal infinity, the natural zero-mass reference metric is the $m=0$ Kottler metric, namely the hyperbolic cusp. When the mass is negative, the most immediate phenomenon is the formation of a singularity. Compared with the asymptotically flat setting, however, additional phenomena may also occur. We begin by examining the structure of the singularities arising in the negative-mass Kottler family.

Consider $g_m$ as in \eqref{def-Kottler} with $m=-a, a>0$. As $r\to0$, $ g_m\sim \frac{r}{2a}\,dr^2+r^2h$. Let $s$ denote the geodesic distance from $r=0$, i.e. $ds=
(r^2+\frac{2a}{r})^{-1/2}dr\sim\sqrt{\frac{r}{2a}}\,dr$
and hence $s
    \sim
    \frac{2}{3\sqrt{2a}}r^{3/2}$. Consequently,
\begin{equation}\label{AH-singularity-asmp}
g_m
\sim
ds^2+
\left(\frac{3\sqrt{2a}}{2}\right)^{4/3}
s^{4/3}h.
\end{equation}
Thus, within the Kottler family, negative mass produces a horn singularity of type $2/3$. Besides, the positive mass theorem of Galloway and Tsang \cite{GallowayTsang2026} implies that a complete ALH manifold with topology $\mathbb R\times T^2$ and scalar curvature $R_g\geq-6$ has nonnegative mass at its toroidal end. This provides an ALH analogue of the ZAS picture discussed above and motivates Theorem~\ref{thm-ALH-ZAS}. We give a short proof, closely following the argument of Lee and Neves \cite{Lee2013ThePI}.

\begin{proof}

For toroidal surfaces in the negative scalar curvature setting, the Hawking mass is defined by
\begin{equation}\label{eq:hawking-mass-ALH}
m_H(\Sigma)
:=
\sqrt{\frac{|\Sigma|}{16\pi}}
\left(
-\frac{1}{16\pi}
\int_\Sigma (H^2-4)\,dA
\right).
\end{equation}

Let $\Sigma_\epsilon=\{r=\epsilon\}\times T^2$ for $\epsilon>0$ sufficiently small. By \eqref{thm-ALH-g-asmp}, direct computation shows
\begin{equation}\label{eq:inner-Hawking}
m_H(\Sigma_\epsilon)
=
\frac{m_1}{(4\pi)^{3/2}}|h_1|^{3/2}.
\end{equation}
for small $\epsilon$. On the other hand, after a change of variable, $\{r=0\}$ is a regular ZAS, and
\begin{equation}\label{eq:ALH-regular-ZAS}
m_{\mathrm{reg}}(\{r=0\})
=
\frac{m_1}{(4\pi)^{3/2}}|h_1|^{3/2},
\end{equation}
by Proposition~13 of \cite{BrayJauregui2013}.

By the same argument as in part \textit{(i)} of the proof of Proposition~\ref{global-existence}, $\Sigma_\epsilon$ is outer-minimizing for sufficiently small $\epsilon$. Hence the weak IMCF starting from $\Sigma_\epsilon$ exists globally by \cite[Lemma~3.1]{Lee2013ThePI}.
 Besides, the Hawking mass is nondecreasing along the flow by Geroch monotonicity; see \cite[Corollary~3.4]{Lee2013ThePI}.

Finally, since $\mu\leq0$, Lemma~3.13 of \cite{Lee2013ThePI} gives
\begin{equation}
\lim_{t\to\infty}m_H(\Sigma_t)\leq
-
\left(
\frac{1}{4\pi}
\int_{T^2}|\frac{\mu}{4}|^{2/3}\,dA_{h_2}
\right)^{3/2} \leq
\frac{1}{4(4\pi)^{3/2}}
\sup_{T^2}\mu\,|h_2|^{3/2}.
\end{equation}
(Note that the mass aspect function in \cite{Lee2013ThePI} equals $\mu/4$ under our convention, which accounts for the additional factor $1/4$ in the following estimate.) Combining this with \eqref{eq:inner-Hawking} and \eqref{eq:ALH-regular-ZAS} proves the result.

\end{proof}

Negative mass need not force a singularity when the topology
of the interior is allowed to change. The Horowitz--Myers
soliton in Example~\ref{example-HM} has negative mass and
extends smoothly over $B^2\times S^1$: the $\xi$-circle
collapses, while the $\theta$-circle remains nontrivial.
This collapse provides a mechanism for negative mass
without singularities. Indeed, the inward principal
curvatures near conformal infinity satisfy
\[
-\kappa_\xi=-1-x^3+O(x^6),
\qquad
-\kappa_\theta=-1+\frac12x^3+O(x^6).
\]
Since
\begin{equation}\label{eq:H-mass-aspect}
-H=-2+\frac12\mu x^3+o(x^3),
\end{equation}
the negativity of $\mu$ comes from the coefficients of $x^3$ in $-\kappa_{\xi}$, which is consistent with
the collapsing of $\xi$-direction.

As shown in \cite{BrendleHungHM,BrendleHungRigidity}, the Horowitz--Myers soliton is the ground state of mass for ALH manifolds on $B^2\times S^1$. One might ask what might happen if the mass breaks this bound. A possible singular degeneration can be
seen explicitly in a doubly warped model. Write
\[
g_{\mathrm{HM}}
=dt^2+u_0(t)^2d\xi^2+v_0(t)^2d\theta^2,
\]
where $u_0(t)\sim 3t/2$ and $v_0(t)\to v_0(0)>0$
as $t\to0$. Keeping $u_0$ fixed, consider
\begin{equation}\label{g-double-warped2}
g=dt^2+u_0(t)^2d\xi^2+v(t)^2d\theta^2
\end{equation}
with the same conformal boundary metric and radial
normalization. Its scalar curvature and mean curvature of the $\{t=const\}$ slice are given by
\begin{equation}\label{R-double-warped}
R_g=-2\left(
\frac{u_0''}{u_0}+\frac{v''}{v}
+\frac{u_0'v'}{u_0v}
\right);\quad H=\frac{u_0'}{u_0}+\frac{v'}{v}
\end{equation}
Setting $w=v/v_0$ and using $R(g_{\mathrm{HM}})=-6$, then $R\geq -6$ becomes
\begin{equation}\label{R-double-warped-solution}
\begin{aligned}
    0
    &\geq  (wv_0)''+\frac{u_0'}{u_0}(wv_0)'
       +\left(\frac{u_0''}{u_0}-3\right)wv_0\\
    &= v_0w''+
       \left(2v_0'+\frac{u_0'}{u_0}v_0\right)w'+\big(v_0''+\frac{u_0'}{u_0}v_0'+(\frac{u_0''}{u_0}-3)v_0\big)w\\
       &=v_0w''+
       \left(2v_0'+\frac{u_0'}{u_0}v_0\right)w'=\frac{(u_0v_0^2w')'}{u_0v_0}
\end{aligned}
\end{equation}
wherever $v>0$.

If $m_{\mathrm{ALH}}(g)<m_{\mathrm{ALH}}(g_{\mathrm{HM}})$,
then \eqref{eq:H-mass-aspect} and
\eqref{R-double-warped} imply $w'(T)>0$
for sufficiently large $T$. Hence, with
$C=u_0(T)v_0(T)^2w'(T)>0$,
\begin{equation}
w(t)\leq w(T)-C\int_t^T
\frac{d\tau}{u_0(\tau)v_0(\tau)^2},
\qquad 0<t<T,
\end{equation}
As $t \searrow 0$, we have
$u_0(t)\sim 3/2t$
and $v_0(t)\to v_0(0)>0$, so the integral diverges to $+\infty$.
Consequently, there is a $T_1\in(0,T)$
such that $w(T_1)=0$. Thus $v$ remains positive
for $T_1< t<T$ and vanishes at $t=T_1$. Here the $\theta$-circle collapses, whereas the $\xi$-circle collapses in the Horowitz--Myers soliton. And since $u_0'(t)>0$ for all $t$, the smoothness condition in Page 13 of \cite{Petersen2006} fails.

To find a model for the singularity, assume now that $R=-6$. Equation~\eqref{R-double-warped-solution} then holds with equality, so $u_0v_0^2w'$ is constant. Since $v(T_1)=0$, we obtain

\begin{equation}
    v'(T_1)
    =\frac{C}{u_0(T_1)v_0(T_1)}>0.
\end{equation}
Therefore $v(t)\sim k(t-T_1)$ for some $k$. Besides, $u_0'>0$ for all $t$, thus a simplified local
model for this type of collapse is
\begin{equation}
    g_S=d\rho^2+\rho^2d\theta^2
        +(a+b\rho)^2d\xi^2,\quad a,b>0
\end{equation}
The positivity of $b$ contributes to the singularity. The areas of the coordinate tori tend to zero,and this degeneration gives a zero area singularity.

Besides, similarly to the Horowitz--Myers soliton, there exist smooth NNSC metrics on $\mathbb R^2\times S^1$ which agree with \eqref{model} near infinity. In this case, one of the circle factors collapses smoothly in the interior, the same mechanism for the negativity of mass for Horowitz-Myers soliton that prevents the development of singularities.

These examples suggest a general principle: when a geometric inequality fails, singularities may arise in the interior, and their strength may be related to how much the inequality is violated. Bray's notion of ZAS mass may also help us understand this phenomenon beyond the asymptotically flat setting. It would be interesting to explore how this approach extends to other settings.

\bibliographystyle{plainnat}  
\bibliography{ref}

@article{Gerhardt1990FlowON,
  title={Flow of nonconvex hypersurfaces into spheres},
  author={Claus Gerhardt},
  journal={Journal of Differential Geometry},
  year={1990},
  volume={32},
  pages={299-314},
}

@article{Zhou2016InverseMC,
  title={Inverse Mean Curvature Flows in Warped Product Manifolds},
  author={Hengyu Zhou},
  journal={The Journal of Geometric Analysis},
  year={2016},
  volume={28},
  pages={1749-1772},
}

@article{Lu2016InverseCF,
  title={Inverse curvature flow in anti-de Sitter-Schwarzschild manifold},
  author={Siyuan Lu},
  journal={Communications in Analysis and Geometry},
  year={2019},
}

@article{Urbas1990OnTE,
  title={On the expansion of starshaped hypersurfaces by symmetric functions of their principal curvatures},
  author={John I. E. Urbas},
  journal={Mathematische Zeitschrift},
  year={1990},
  volume={205},
  pages={355-372},
}

@article{Huisken2001TheIM,
  title={The inverse mean curvature flow and the Riemannian Penrose Inequality},
  author={Gerhard Huisken and Tom Ilmanen},
  journal={Journal of Differential Geometry},
  year={2001},
  volume={59},
  pages={353-437}, 
}

@misc{Topping2006LecturesOT,
  title  = {Lectures on the Ricci Flow},
  author = {Peter M. Topping},
  year   = {2006},
  note   = {Lecture notes},
}

@article{Greene1979CinftyA,
  title={{$C^\infty $} approximations of convex, subharmonic, and plurisubharmonic functions},
  author={Robert E. Greene and H. Wu},
  journal={Annales Scientifiques De L Ecole Normale Superieure},
  year={1979},
  volume={12},
  pages={47-84},
}

@article{Brendle2010DeformationsOT,
  title={Deformations of the hemisphere that increase scalar curvature},
  author={Simon Brendle and Fernando C. Marques and Andre' Neves},
  journal={Inventiones mathematicae},
  year={2010},
  volume={185},
  pages={175-197},
}

@article{Shi2002PositiveMT,
  title={Positive Mass Theorem and the Boundary Behaviors of Compact Manifolds with Nonnegative Scalar Curvature},
  author={Yuguang Shi and Luen-Fai Tam},
  journal={Journal of Differential Geometry},
  year={2002},
  volume={62},
  pages={79-125},
}

@article{Lu2017MinimalHA,
  title={Minimal hypersurfaces and boundary behavior of compact manifolds with nonnegative scalar curvature},
  author={Siyuan Lu and Pengzi Miao},
  journal={Journal of Differential Geometry},
  year={2017},
}

@book{ladyzhenskaia1968linear,
  title={Linear and quasi-linear equations of parabolic type},
  author={Ladyzhenskaia, O. A. and Solonnikov, V. A. and Uralceva, N. N.},
  volume={23},
  series={Translations of Mathematical Monographs},
  year={1968},
  publisher={American Mathematical Society},
  address={Providence, R.I.}
}

@book{friedman1964partial,
  title={Partial differential equations of parabolic type},
  author={Friedman, Avner},
  year={1964},
  publisher={Prentice Hall},
  address={Englewood Cliffs, N.J.}
}

@article{ChruscielHerzlich2003,
author = {Chru{'s}ciel, Piotr T. and Herzlich, Marc},
title = {The mass of asymptotically hyperbolic {R}iemannian manifolds},
journal = {Pacific Journal of Mathematics},
volume = {212},
number = {2},
pages = {231--264},
year = {2003},
doi = {10.2140/pjm.2003.212.231}
}

@article{Wang2001AHMass,
author = {Wang, Xiaodong},
title = {The mass of asymptotically hyperbolic manifolds},
journal = {Journal of Differential Geometry},
volume = {57},
number = {2},
pages = {273--299},
year = {2001},
doi = {10.4310/jdg/1090348112}
}

@article{Bartnik1986,
  author  = {Bartnik, Robert},
  title   = {The Mass of an Asymptotically Flat Manifold},
  journal = {Communications on Pure and Applied Mathematics},
  volume  = {39},
  pages   = {661--693},
  year    = {1986}
}

@article{GallowayWoolgar2015,
  author  = {Galloway, Gregory J. and Woolgar, Eric},
  title   = {On Static Poincar{\'e}--Einstein Metrics},
  journal = {Journal of High Energy Physics},
  volume  = {2015},
  number  = {6},
  pages   = {051},
  year    = {2015},
  doi     = {10.1007/JHEP06(2015)051}
}

@article{HorowitzMyers1998,
  author  = {Horowitz, Gary T. and Myers, Robert C.},
  title   = {The {AdS/CFT} Correspondence and a New Positive Energy
             Conjecture for General Relativity},
  journal = {Physical Review D},
  volume  = {59},
  pages   = {026005},
  year    = {1998},
  doi     = {10.1103/PhysRevD.59.026005}
}

@article{Birmingham1999,
  author  = {Birmingham, Danny},
  title   = {Topological Black Holes in Anti-de Sitter Space},
  journal = {Classical and Quantum Gravity},
  volume  = {16},
  number  = {4},
  pages   = {1197--1205},
  year    = {1999},
  doi     = {10.1088/0264-9381/16/4/009},
}

@article{GallowaySuryaWoolgar2003,
  author  = {Galloway, Gregory J. and Surya, Sumati and Woolgar, Eric},
  title   = {On the Geometry and Mass of Static, Asymptotically
             {AdS} Spacetimes, and the Uniqueness of the {AdS} Soliton},
  journal = {Communications in Mathematical Physics},
  volume  = {241},
  pages   = {1--25},
  year    = {2003},
  doi     = {10.1007/s00220-003-0912-7},
}

@article{DaiSunWang2025Spin,
  author  = {Dai, Xianzhe and Sun, Yukai and Wang, Changliang},
  title   = {Positive mass theorem for asymptotically flat spin manifolds
             with isolated conical singularities},
  journal = {Transactions of the American Mathematical Society},
  volume  = {378},
  number  = {4},
  pages   = {2617--2642},
  year    = {2025},
  doi     = {10.1090/tran/9331}
}

@article{DaiSunWang2025,
  author  = {Dai, Xianzhe and Sun, Yukai and Wang, Changliang},
  title   = {The positive mass theorem for asymptotically flat manifolds
             with isolated conical singularities},
  journal = {Science China Mathematics},
  volume  = {68},
  number  = {7},
  pages   = {1671--1686},
  year    = {2025},
  doi     = {10.1007/s11425-024-2325-6}
}

@article{BrayJauregui2013,
  author  = {Bray, Hubert L. and Jauregui, Jeffrey L.},
  title   = {A geometric theory of zero area singularities in general relativity},
  journal = {Asian Journal of Mathematics},
  volume  = {17},
  number  = {3},
  pages   = {525--560},
  year    = {2013},
  doi     = {10.4310/AJM.2013.v17.n3.a6}
}

@article{Robbins2010,
  author  = {Robbins, Nicholas P.},
  title   = {Zero area singularities in general relativity and inverse mean curvature flow},
  journal = {Classical and Quantum Gravity},
  volume  = {27},
  number  = {2},
  pages   = {025011},
  year    = {2010},
  doi     = {10.1088/0264-9381/27/2/025011}
}

@article{ShiTam2018,
  author  = {Shi, Yuguang and Tam, Luen-Fai},
  title   = {Scalar curvature and singular metrics},
  journal = {Pacific Journal of Mathematics},
  volume  = {293},
  number  = {2},
  pages   = {427--470},
  year    = {2018},
  doi     = {10.2140/pjm.2018.293.427}
}

@article{GallowayTsang2026,
  author        = {Galloway, Gregory J. and Tsang, Tin-Yau},
  title         = {Positive mass theorems for manifolds with {ALH} toroidal ends},
  journal       = {arXiv preprint},
  year          = {2026},
  eprint        = {2602.08789},
  archivePrefix = {arXiv},
  primaryClass  = {math.DG}
}

@article{BrendleHungHM,
  author        = {Brendle, Simon and Hung, Pei-Ken},
  title         = {Systolic inequalities and the Horowitz--Myers conjecture},
  journal       = {Acta Mathematica},
  note          = {to appear},
  year          = {2026},
  eprint        = {2406.04283},
  archivePrefix = {arXiv},
  primaryClass  = {math.DG}
}

@article{BrendleHungRigidity,
  author        = {Brendle, Simon and Hung, Pei-Ken},
  title         = {The rigidity statement in the Horowitz--Myers conjecture},
  year          = {2025},
  eprint        = {2504.16812},
  archivePrefix = {arXiv},
  primaryClass  = {math.DG}
}

@article{ZhuRigidity,
  author  = {Zhu, Jintian},
  title   = {Rigidity results for complete manifolds with nonnegative scalar curvature},
  journal = {Journal of Differential Geometry},
  volume  = {125},
  number  = {3},
  pages   = {623--644},
  year    = {2023},
  doi     = {10.4310/JDG/1701804153}
}

@article{HuangJang2022,
  author  = {Huang, Lan-Hsuan and Jang, Hyun Chul},
  title   = {Scalar Curvature Deformation and Mass Rigidity for ALH Manifolds with Boundary},
  journal = {Transactions of the American Mathematical Society},
  volume  = {375},
  number  = {11},
  pages   = {8151--8191},
  year    = {2022},
  doi     = {10.1090/tran/8755}
}

@article{SchoenYau1979,
author = {Schoen, Richard and Yau, Shing-Tung},
title = {On the Proof of the Positive Mass Conjecture in General Relativity},
journal = {Communications in Mathematical Physics},
volume = {65},
number = {1},
pages = {45--76},
year = {1979},
doi = {10.1007/BF01940959}
}

@article{SchoenYau1981,
author = {Schoen, Richard and Yau, Shing-Tung},
title = {Proof of the Positive Mass Theorem. II},
journal = {Communications in Mathematical Physics},
volume = {79},
number = {2},
pages = {231--260},
year = {1981},
doi = {10.1007/BF01942062}
}

@article{Witten1981,
author = {Witten, Edward},
title = {A New Proof of the Positive Energy Theorem},
journal = {Communications in Mathematical Physics},
volume = {80},
number = {3},
pages = {381--402},
year = {1981},
doi = {10.1007/BF01208277}
}

@article{ADM1962,
  title={Republication of: The dynamics of general relativity},
  author={Richard L. Arnowitt and Stanley Deser and Charles W. Misner},
  journal={General Relativity and Gravitation},
  year={2004},
  volume={40},
  pages={1997-2027},
  url={https://api.semanticscholar.org/CorpusID:14054267}
}

@article{ChruscielSimon2001,
  author  = {Chru{\'s}ciel, Piotr T. and Simon, Walter},
  title   = {Towards the classification of static vacuum spacetimes
             with negative cosmological constant},
  journal = {Journal of Mathematical Physics},
  volume  = {42},
  number  = {4},
  pages   = {1779--1817},
  year    = {2001},
  doi     = {10.1063/1.1340869}
}

@article{AlaeeHungKhuri2025,
  author  = {Alaee, Aghil and Hung, Pei-Ken and Khuri, Marcus},
  title   = {Boundary Behavior of Compact Manifolds with Scalar Curvature
             Lower Bounds and Static Quasi-Local Mass of Tori},
  journal = {Proceedings of the American Mathematical Society},
  volume  = {153},
  number  = {5},
  pages   = {2153--2167},
  year    = {2025},
  doi     = {10.1090/proc/17223}
}

@article{ChruscielGallowayNguyenPaetz2018,
author = {Chru{'s}ciel, Piotr T. and Galloway, Gregory J.
and Nguyen, Luc and Paetz, Tim-Torben},
title = {On the Mass Aspect Function and Positive Energy Theorems
for Asymptotically Hyperbolic Manifolds},
journal = {Classical and Quantum Gravity},
volume = {35},
number = {11},
pages = {115015},
year = {2018},
doi = {10.1088/1361-6382/aabed1}
}

@misc{BiHaoHeShiZhu2026,
  author        = {Bi, Yuchen and Hao, Tianze and He, Shihang and Shi, Yuguang and Zhu, Jintian},
  title         = {A Proof for the Riemannian Positive Mass Theorem up to Dimension 19},
  year          = {2026},
  eprint        = {2603.02769},
  archivePrefix = {arXiv},
  primaryClass  = {math.DG},
  doi           = {10.48550/arXiv.2603.02769}
}

@misc{BrendleWang2026,
  author        = {Brendle, Simon and Wang, Yipeng},
  title         = {A Dimension Descent Scheme for the Positive Mass Theorem in Arbitrary Dimension},
  year          = {2026},
  eprint        = {2604.08473},
  archivePrefix = {arXiv},
  primaryClass  = {math.DG},
  doi           = {10.48550/arXiv.2604.08473}
}

@article{Neves2010IMCF,
  author  = {Neves, Andr{\'e}},
  title   = {Insufficient convergence of inverse mean curvature flow on asymptotically hyperbolic manifolds},
  journal = {Journal of Differential Geometry},
  volume  = {84},
  number  = {1},
  pages   = {191--229},
  year    = {2010},
  doi     = {10.4310/jdg/1271271798}
}

@article{CaiGalloway2000,
  author  = {Cai, Mingliang and Galloway, Gregory J.},
  title   = {Rigidity of area minimizing tori in 3-manifolds of nonnegative scalar curvature},
  journal = {Communications in Analysis and Geometry},
  volume  = {8},
  number  = {3},
  pages   = {565--573},
  year    = {2000},
  doi     = {10.4310/CAG.2000.v8.n3.a6}
}

@misc{BiZhu2026,
  author        = {Bi, Yuchen and Zhu, Jintian},
  title         = {Riemannian Penrose Inequality in All Dimensions},
  year          = {2026},
  eprint        = {2605.00680},
  archivePrefix = {arXiv},
  primaryClass  = {math.DG}
}

@misc{Wang2025Neumann,
  author        = {Wang, Zhixin},
  title         = {Neumann Data and Second Variation Formula of Renormalized Area
                   for Conformally Compact Static Spaces},
  year          = {2025},
  eprint        = {2504.15048},
  archivePrefix = {arXiv},
  primaryClass  = {math.DG}
}

@article{Lee2013ThePI,
  title={The Penrose Inequality for Asymptotically Locally Hyperbolic Spaces with Nonpositive Mass},
  author={Dan A. Lee and Andr'e Neves},
  journal={Communications in Mathematical Physics},
  year={2013},
  volume={339},
  pages={327-352},
  url={https://api.semanticscholar.org/CorpusID:118287029}
}

@book{Petersen2006,
  author    = {Petersen, Peter},
  title     = {Riemannian Geometry},
  edition   = {2},
  series    = {Graduate Texts in Mathematics},
  volume    = {171},
  publisher = {Springer},
  address   = {New York},
  year      = {2006},
  doi       = {10.1007/978-0-387-29403-2},
  isbn      = {978-0-387-29246-5}
}

\end{document}